\newtheorem{theorem}{Theorem}[section]
\newtheorem{lemma}[theorem]{Lemma}
\newtheorem{proposition}[theorem]{Proposition}
\newtheorem{definition}[theorem]{Definition}
\newtheorem*{proposition*}{Proposition}
\newtheorem*{lemma*}{Lemma}
\newtheorem*{theorem*}{Theorem}
\newtheorem*{corollary*}{Corollary}
\title{Closed ray nil-affine manifolds and parabolic geometries}
\author{Raphaël V.~{\sc Alexandre}\footnote{Institut de Math\'ematiques de Jussieu-Paris Rive Gauche, Sorbonne Université, 4 place Jussieu, 75252 Paris Cédex, France.
ACG,
OURAGAN (IMJ-PRG, INRIA Paris, Sorbonne Université, Université de Paris, CNRS).
Email address: {\tt raphael.alexandre@math.cnrs.fr}.}}
\newcommand\cN{\mathcal{N}}
\newcommand\dd{\,{\rm d}}
\DeclareMathOperator*\id{id}
\newcommand\R{\mathbf{R}}
\newcommand\C{\mathbf{C}}
\newcommand\CP{\mathbf{CP}}
\newcommand\Heis{{\rm Heis}}
\DeclareMathOperator*\Aff{Aff}
\DeclareMathOperator\ad{ad}
\DeclareMathOperator\Ad{Ad}
\DeclareMathOperator*\GL{GL}
\DeclareMathOperator*\SU{SU}
\DeclareMathOperator*\SL{SL}
\newcommand\vol{{\rm vol}}
\DeclareMathOperator*\Aut{Aut}
\newcommand\ii{\boldsymbol{i}}
\newcommand{\iI}{\mathopen{[}0\,,1\mathclose{]}}
\newcommand\rT{{\rm T}}
\renewcommand\Re{{\rm Re}}
\newcommand\thmpartialcomp{
Let $\cN$ be a commutative or two-step nilpotent space.
Let $(G_1,\cN)$ be a ray geometry of rank one.  Let $M$ be
a closed $(G_1,\cN)$-manifold.
Then $M$ is either complete or there exists a nil-affine subspace $I\subset \cN$ such that $D\colon\widetilde M \to \cN-I$ is a cover onto its image.
}
\newcommand\thmauto{
Let $\cN$ be a commutative or two-step nilpotent space.
Let $(G_1,\cN)$ be a   ray geometry of rank one.  Let $M$ be
a closed $(G_1,\cN)$-manifold. If $\Aut(M)$ does not act properly on $M$ then $M$ is complete.
}
\newcommand\thmmarkus{
Let $\cN$ be a commutative or two-step nilpotent space.
Let $(G_1,\cN)$ be a ray geometry of rank one with parallel volume. Every closed $(G_1,\cN)$-manifold   is complete.
}
\begin{document}
\maketitle

\begin{abstract}
Ray nil-affine geometries are defined on nilpotent spaces. They occur in every parabolic geometry and in those cases, the nilpotent space is an open dense subset of the corresponding flag manifold. 

We are interested in closed manifolds having a ray nil-affine structure. We show that under a rank one condition on the isotropy, closed manifolds are either complete or their developing map is a cover onto the complement of a nil-affine subspace. We prove that if additionally there is a parallel volume or if the automorphism group acts non properly then closed manifolds are always complete.

This paper is a sequel to a previous work on ray manifolds in affine geometry.
\end{abstract}

\section{Introduction}

Nilpotent spaces and nil-affine geometries are the natural generalization of  Euclidean spaces and  affine geometries. Interestingly, the study of nil-affine  geometries has an importance in  parabolic geometries.

If one has a non-compact symmetric space $G/K$, its visual boundary is generally not homogeneous but has several flag manifolds $G/P$ as boundary components. The choice of a parabolic geometry $(G,G/P)$ corresponds to the choice of a parabolic subgroup $P$. A parabolic subgroup is a closed subgroup of $G$ containing a minimal parabolic subgroup (a conjugate of the Borel subgroup $B\subset G$).

For instance,  the conformal geometry is the (only) parabolic geometry of the hyperbolic space.
More generally, the Furstenberg boundary is the parabolic manifold for the choice of the Borel subgroup as isotropy.

Once a geometry $(G,G/P)$ is chosen, one can try to classify manifolds with a geometric structure modeled on $(G,G/P)$ (it is the choice of an atlas with values in $G/P$ and transitional maps in $G$). One of the most general questions is:

\emph{What are the closed manifolds that can be modeled on the geometry $(G,G/P)$ ?} 
This question is very open. In conformal geometry it is not known what are the closed manifolds modeled on it.

\vskip10pt
To investigate this question there are several possibilities. One can reduce $G/P$ to $\Omega\subset G/P$ in order to obtain an invariant metric on $\Omega$ (invariant for the subgroup preserving $\Omega$).  For example the hyperbolic disc can be identified to a disc $\Omega\subset S^2$ in the conformal geometry of the sphere. In this direction, Zimmer~\cite{Zimmer} studies closed manifolds with developing maps  for which the image is \emph{bounded in an affine chart} $\Omega\subset G/P$.

Another direction that we choose is
to reduce $G/P$ to an open dense subset.  The open dense subset we choose  is known as an open Schubert stratum in~\cite{Kapovich} (see also~\cite{Guivarch}). This open dense subset is a nilpotent space $N$ that can be seen as a subgroup of $P$. So $(P,N)$ is a subgeometry of $(G,G/P)$. The isotropy $L$ of $(P,N)$ is the \emph{Levi} subgroup $L\subset P$ and we call $(P,N)=(P,P/L)$ a \emph{Levi geometry}.

\vskip10pt
In rank one parabolic geometry (the parabolic geometries of the real rank one non-compact symmetric spaces), the study of this geometry $(P,P/L)$ has major consequences on the geometry $(G,G/P)$. For instance: if a closed manifold modeled on $G/P$ avoids a point, then its developing map is a cover onto its image.
We expect the study of $(P,P/L)$ to be again fruitful in higher rank parabolic geometries.

In rank one parabolic geometry, $P=N\rtimes L$  with $L=MA$ the combination of an abelian group $A$ (that has dimension one) and a rotational centralizing group $M\subset K$. The study of $P/L$ for rank one parabolic geometries has been achieved through the work of Fried~\cite{Fried}, Miner~\cite{Miner} and more recently by the author~\cite{Ale}.

Those are the first examples of what we will call \emph{ray nil-affine} geometries.
Those geometries appear in very different contexts, and in particular in parabolic geometries of higher rank. They also have an interest in affine geometry and were studied in a previous work of the author~\cite{Ale3}.

A \emph{ray geometry} $(N\rtimes KA,N)$ on a nilpotent space $N$ is the data of a linear group $KA$ such that $A$ is an abelian subgroup and $K$ centralizes $A$ and is a compact subgroup. The \emph{rank} of a ray geometry is the dimension of $A$.
In a higher rank parabolic geometry, one can find different ray nil-affine geometries. They are  more constrained than a Levi geometry $(P,P/L)$. However, we intend to show that they are rich examples of parabolic subgeometries that can also appear in other contexts.

We will give a generic construction (see proposition~\ref{prop-pararay}) to obtain ray geometries $(N\rtimes KA,N)\subset (P,N)$ in any Levi geometry.

\vskip10pt
In this paper, we address the question of the geometry of closed manifolds having a ray nil-affine structure.
We extend the work in affine geometry~\cite{Ale3} to a setting where the space can be nilpotent of order two. We believe those results should be true for any nilpotency order.

We define nil-affine subspaces to be the left translations of linear subspaces of the nilpotent Lie algebra. For instance, geodesic curves are (in this sense) left translations of linear line segments. See section~\ref{sec-2}.

\begin{theorem*}[\ref{thm-partialcomp}]
\thmpartialcomp
\end{theorem*}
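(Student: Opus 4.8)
The plan is to analyze the developing map $D\colon\widetilde M\to\cN$ together with its holonomy $\rho\colon\Gamma=\pi_1(M)\to G_1=\cN\rtimes KA$, following the strategy of Fried~\cite{Fried}, Miner~\cite{Miner} and~\cite{Ale3}, but adapted to the compact factor $K$ and to step-two nilpotency. First I would record the basic structure: $\Omega=D(\widetilde M)$ is open, connected and $\rho(\Gamma)$-invariant, and since $M$ is closed there is a compact $C\subset\widetilde M$ with $\Gamma\cdot C=\widetilde M$, so $\rho(\Gamma)\cdot C'=\Omega$ with $C'=D(C)$ compact; that is, $\rho(\Gamma)$ acts cocompactly on $\Omega$. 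Writing $\ell\colon G_1\to KA$ for the linear part and using that the rank is one ($A\cong\R$, $K$ compact), the proof splits according to whether $\ell(\rho(\Gamma))$ is relatively compact, equivalently whether the $A$-component of the holonomy is bounded.

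In the bounded case I would show $M$ is complete. Taking a left-invariant Riemannian metric $g_0$ on $\cN$ for which $K$ acts isometrically, every element of the compact closure $Q$ of $\ell(\rho(\Gamma))$ distorts $g_0$ by a bounded factor (its $A$-component lies in a compact interval), so the $Q$-average $g$ of $g_0$ is $\rho(\Gamma)$-invariant, bi-Lipschitz to $g_0$, hence complete; then $D^{*}g$ descends to the closed manifold $M$, so $(\widetilde M,D^{*}g)$ is complete, $D$ is a local isometry onto $(\cN,g)$ and therefore a covering map, and since $\cN$ is simply connected $D$ is a diffeomorphism.

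In the unbounded case I would pick $\gamma_n\in\Gamma$ with $\ell(\rho(\gamma_n))=k_na_{t_n}$, $|t_n|\to\infty$, pass to a subsequence with $k_n\to k_\infty$, and (after possibly inverting) assume $a_{t_n}$ lies in the contracting regime, so that $a_{t_n}$ tends to the map collapsing $\cN$ onto $I_0=\exp\mathfrak v$, where $\mathfrak v\subset\mathfrak n$ is the non-contracting weight subspace of the generator of $A$. Here is where the commutative or two-step hypothesis is used: when $\cN$ is commutative $\mathfrak v$ is just a linear subspace, and when $\cN$ is two-step nilpotent a short graded bracket computation shows $\mathfrak v$ is a subalgebra, so $I_0$ and its left translates are genuine nil-affine subspaces. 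Using cocompactness I would control the translation parts of the $\rho(\gamma_n)$ — they cannot all leave every compact set without contradicting $\rho(\Gamma)\cdot C'=\Omega$ — and conclude, after a further subsequence, that $\rho(\gamma_n)$ converges uniformly on compact subsets of $\cN\setminus I$ (for a suitable $\Gamma$-translate $I$ of $I_0$) to a map valued in a translate of $I_0$, with the inverse maps $\rho(\gamma_n)^{-1}$ expanding outward from that set.

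Finally I would identify $\Omega$ and the nature of $D$. The inclusion $\Omega\subseteq\cN\setminus I$ should follow because any point of $\Omega$ on the repelling locus $I$ would be pushed out of every compact subset of $\Omega$ by the $\rho(\gamma_n)$, against cocompactness; the reverse inclusion because $\Omega$ is open, invariant and accumulates on the limit translate of $I_0$, so spreading a small ball of $\Omega$ by the expanding maps $\rho(\gamma_n)^{-1}$ exhausts $\cN\setminus I$; hence $\Omega=\cN\setminus I$. To see $D$ is a covering onto $\cN\setminus I$, I would put on $\cN\setminus I$ a $\rho(\Gamma)$-invariant complete metric adapted to the contracting flow — a ``cone-type'' metric that blows up along $I$, generalizing the cone metric used in the Hopf case of~\cite{Fried} — pull it back to the closed manifold $M$, and repeat the completeness argument. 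The main obstacle, I expect, is precisely this dynamical core of the unbounded case: proving the non-contracting subspace is a nil-affine subspace for two-step $\cN$, pinning down the translation parts so that the holonomy sequence genuinely focuses, and then running the expansion argument that forces $\Omega=\cN\setminus I$.
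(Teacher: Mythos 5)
There is a genuine gap, and it lies exactly where you locate it yourself: the ``dynamical core'' of the unbounded case. Your dichotomy (linear holonomy $\ell(\rho(\Gamma))$ relatively compact or not) is not the dichotomy of the theorem. The bounded half is fine (the averaging argument gives an invariant complete metric and completeness), but in the unbounded half you set out to prove $\Omega=\cN\setminus I$ and that $D$ is a cover onto it, and that statement is false in general: there are \emph{complete} closed manifolds with unbounded linear holonomy in rank-one ray geometries. For instance take $\cN=\R^3$, $K$ trivial, $A=\{\mathrm{diag}(e^{t},e^{-t},1)\}$; the mapping torus of a hyperbolic element of $\SL(2,\Z)$ carries a complete affine structure whose holonomy lies in $\R^3\rtimes A$ with unbounded $A$-part, and its developing map is a diffeomorphism onto all of $\cN$, not onto the complement of a nil-affine subspace. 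This example also shows why your two key dynamical steps fail. First, a rank-one ray geometry may have weights of both signs, so after extracting, $a_{t_n}$ does \emph{not} converge to a collapse onto the non-contracting subspace: there are genuinely expanding directions as well (this is the point of the paper's three-fold decomposition $E\oplus P\oplus F$ into contracted, bounded and expanded directions, each possibly nontrivial). Second, ``a point of $\Omega$ on the repelling locus would be pushed out of every compact subset of $\Omega$, against cocompactness'' is not a contradiction: cocompactness of the $\rho(\Gamma)$-action on $\Omega$ in no way prevents individual orbits from leaving every compact set (again the Anosov-type example). Likewise, spreading a small ball by $\rho(\gamma_n)^{-1}$ requires control of the translation parts and of the expanding $F$-directions that you have not supplied.

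The paper's proof is organized quite differently and avoids your dichotomy altogether. Incompleteness is taken as the hypothesis (otherwise $M$ is complete and we are done), and one uses an incomplete geodesic together with recurrence on the closed manifold to build a \emph{Fried dynamics} $T_{ji}=\rho(g_{ji})$; the asymptotics of its $KA$-part produce the decomposition $E\oplus P\oplus F$ and, via the convexity/visibility machinery (limit sets $C_{r,\infty}$, asymptotic fixed points $q_i$, lemma~\ref{lem-convQ}, lemma~\ref{lem-approxneg}, where the two-step hypothesis really enters), a half-space $H_x$ visible from each $x\in\widetilde M$, with invisible boundary part $I$ shown to be a nil-affine subspace independent of $x$. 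The covering property of $D\colon\widetilde M\to\cN-I$ is then obtained by a local path-lifting argument using maximal balls avoiding $I$, not by a global expansion/exhaustion of $\Omega$. If you want to salvage your outline, the recurrence of incomplete geodesics (rather than unboundedness of the linear holonomy) is the mechanism that produces holonomy sequences which genuinely focus, and the visibility analysis is what replaces your cone-metric/exhaustion step.
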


\begin{theorem*}[\ref{thm-markus}]
\thmmarkus
\end{theorem*}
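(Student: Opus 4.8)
\emph{Proof plan.} The plan is to argue by contradiction, feeding Theorem~\ref{thm-partialcomp} into the finiteness of the total volume; the natural framework is an induction on $\dim\cN$. Assume $M$ is not complete. Then Theorem~\ref{thm-partialcomp} provides a proper nil-affine subspace $I\subset\cN$ and a covering map $D\colon\widetilde M\to\cN-I$ onto its image $\Omega:=D(\widetilde M)$; since $\widetilde M$ is simply connected, $D$ is the universal covering of $\Omega$. Every holonomy transformation $\rho(\gamma)$, $\gamma\in\pi_1(M)$, is a diffeomorphism of $\cN$ with $\rho(\gamma)\,\Omega=\Omega$, so one checks (taking $I$ minimal, or directly from its construction in Theorem~\ref{thm-partialcomp}) that the holonomy group $\Gamma:=\rho(\pi_1(M))\subset G_1$ preserves $I$. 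Write $G_1=N\rtimes KA$ with $A\cong\R$ the rank-one factor, acting on $\cN$ by a non-trivial one-parameter group of dilations.

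Parallel volume furnishes a $G_1$-invariant volume form $\omega$ on $\cN$. As $D$ is holonomy-equivariant and $\omega$ is $\Gamma$-invariant, $D^{*}\omega$ is a $\pi_1(M)$-invariant volume form on $\widetilde M$ and descends to a volume form $\bar\omega$ on $M$; since $M$ is closed, $\int_M\bar\omega<\infty$. The decisive point is that $\omega$, being $A$-invariant, forces the generating dilation to have Jacobian $1$ on $\cN$: it contracts some directions while expanding others, non-trivially because $A$ is non-compact. Together with the compactness of $M$ this should yield the contradiction. When $D$ has finite degree, $\Gamma$ acts properly discontinuously, freely and cocompactly on $\Omega\subset\cN-I$, hence it must account for the non-compact end of $\Omega$ running along $I$; since $\Gamma$ preserves $I$, this forces an element $g\in\Gamma$ with non-trivial $A$-component, whose linear part therefore dilates some directions transverse to $I$ by a factor of modulus $\neq1$. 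As that linear part has Jacobian $1$, the transverse expansion must be compensated by a genuine contraction in directions tangent to $I$. But the restriction $\Gamma|_I$ governs a cocompact nil-affine structure on $I$ carrying an invariant volume, an inductive instance of the statement; and a discrete group containing an element dilating non-trivially along $I$ is incompatible with that, since it would fail to preserve the lattice-like structure along $I$ that the compactness of $M$ imposes, contradicting the discreteness of $\Gamma$. Hence $M$ is complete.

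The heart of the matter — and the only delicate step — is the last paragraph: turning ``$M$ is closed'' into enough structure on the discrete group $\Gamma\subset N\rtimes KA$ preserving $I$, notably the reduction of $\Gamma|_I$ to an inductive instance, and extracting the incompatibility between a Jacobian-one dilation transverse to $I$ and a discrete cocompact-type action tangent to it. Two technical points will need care. First, when $\cN-I$ is not simply connected — essentially the codimension-two case — the covering $D$ may have infinite degree, so $\Gamma\backslash\Omega$ need not be compact; there one replaces cocompactness by a Poincar\'e-recurrence argument on the finite-volume manifold $M$, using that the developed $A$-flow preserves $D^{*}\omega$ (because $A\subset G_1$) while its orbits in $\Omega$ escape toward $I$ or toward infinity, so that recurrence produces divergent holonomy elements asymptotic to $A$ — which again meet the same scaling obstruction. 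Second, in the two-step nilpotent case the generating dilation is anisotropic and coupled to the Lie bracket, so the splitting of $\cN$ into directions tangent and transverse to $I$, the induced structure on $I$, and the estimates on how $\Gamma$ displaces a collar of $I$ must all be carried out compatibly with the grading; this is precisely where the hypothesis that $\cN$ be commutative or two-step nilpotent is used.
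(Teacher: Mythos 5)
There is a genuine gap: your argument never actually produces the contradiction, and the step on which everything rests is unfounded. Concretely, the assertion that ``the restriction $\Gamma|_I$ governs a cocompact nil-affine structure on $I$ carrying an invariant volume, an inductive instance of the statement'' has no justification. The developing image is $\cN-I$; the subspace $I$ itself is not developed onto by any part of $\widetilde M$, and nothing gives you a closed manifold modeled on $I$: the holonomy $\Gamma$ need not act properly discontinuously, freely, or cocompactly on $I$, so there is no inductive instance to invoke and the induction on $\dim\cN$ has no base to stand on. Likewise, the ``lattice-like structure along $I$'' that is supposed to be incompatible with a dilating element is never defined or established, and the existence of a holonomy element with nontrivial $A$-component is only inferred heuristically from cocompactness (finite degree) or Poincar\'e recurrence (infinite degree), not proved. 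You acknowledge yourself that this last paragraph is ``the only delicate step''; it is precisely the step that is missing, so what you have is a plan rather than a proof.

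For comparison, the paper's proof avoids both the induction and the search for special holonomy elements. It first establishes (Section~\ref{sec-5}) that $I$ is a subalgebra of $\mathfrak n$ preserved by the linear holonomy, and constructs a complementary subspace $V$, also invariant under the linear holonomy, with a global diffeomorphic decomposition $N=\exp(I)\exp(V)$ (lemma~\ref{lem-red-decomp}); this reducibility analysis is the real content and is absent from your proposal. It then defines on $\cN-I$ the vector field $X(x)=(L_x)_*\ln(x_V)$, checks that $X$ is invariant under the full stabilizer $H\subset G$ of $I$ (hence under $\Gamma$), so that $X$ descends to a vector field $Y$ on the closed manifold $M$. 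The flow of $X$ is $R_t(x_I+x_V)=x_I+e^t x_V$, which scales the parallel volume by $e^{t\dim V}$, whereas the flow of $Y$ on the closed manifold must preserve the total volume $\int_M \Phi_t^*\vol_M=\int_M\vol_M$; comparing the two through a partition of unity gives the contradiction. Note also that your suggestion to exploit a ``Jacobian-one dilation'' in $\Gamma$ runs against the Goldman--Hirsch-type obstruction the paper explicitly says it cannot use here because the ambient group law is non-commutative; the Fried-style expanding vector field is the substitute, and it is the piece your argument would need to supply.
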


\begin{theorem*}[\ref{thm-auto}]
\thmauto
\end{theorem*}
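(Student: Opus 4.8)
The plan is to prove the contrapositive: if $M$ is not complete, then $\Aut(M)$ acts properly on $M$. Since $M$ is closed and $\Aut(M)$ is a Lie group acting smoothly on it, a proper action is equivalent to $\Aut(M)$ being compact (apply the definition of properness with the compact set $M$ itself), so it suffices to rule out a sequence $f_n\in\Aut(M)$ having no convergent subsequence. By Theorem~\ref{thm-partialcomp} there is a nil-affine subspace $I\subsetneq\cN$ with $D\colon\widetilde M\to\cN-I$ a covering onto its image $\Omega:=D(\widetilde M)$, a proper open subset of $\cN$; I will also use that the holonomy $\Gamma:=\rho(\pi_1 M)$ acts properly discontinuously on $\Omega$, which comes out of the incomplete-case analysis behind that theorem. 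Finally I use the one structural feature of a ray geometry I really need: $G_1$ acts conformally for a left-invariant homogeneous gauge on $\cN$, with $N$ and the compact factor $K$ acting by isometries and the rank-one factor $A=\{a_t\}_{t\in\R}$ acting by dilations $a_t=\delta_{e^{\mu t}}$ for some $\mu\neq 0$; let $\chi(g)\in\R_{>0}$ denote the conformal factor of $g\in G_1$, so $\chi$ is a homomorphism with $\chi|_N=\chi|_K=1$ and $\chi(a_t)=e^{\mu t}$. Normalize so that $\mu>0$.

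Given such an $f_n$, fix $\tilde p\in\widetilde M$; as $M$ is compact, after a subsequence $f_n(\pi\tilde p)$ converges in $M$, and one can choose lifts $\tilde f_n$ of $f_n$ — automatically $(G_1,\cN)$-automorphisms of $\widetilde M$ — with $\tilde f_n(\tilde p)\to\tilde q$. Let $g_n\in G_1$ be the unique element with $D\circ\tilde f_n=g_n\circ D$; since $\tilde f_n$ is a bijection, $g_n$ preserves $\Omega$, and $p_n:=g_n\cdot D(\tilde p)=D(\tilde f_n(\tilde p))\to D(\tilde q)=:q_0\in\Omega$. First I would check that $g_n\to\infty$ in $G_1$: a $(G_1,\cN)$-automorphism of $\widetilde M$ is determined, continuously, by its accompanying element of $G_1$ together with its value at $\tilde p$, so a bounded subsequence of $g_n$ would force $\tilde f_n$, hence $f_n$, to converge.

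The heart of the argument is then a case analysis on how $g_n$ diverges. Write $g_n=\nu_n\kappa_n a_{t_n}$ with $\nu_n\in N$ and $\kappa_n\in K$; then $g_n\to\infty$ forces $\nu_n\to\infty$ or $|t_n|\to\infty$, and each possibility contradicts $p_n\to q_0\in\Omega$. \emph{If $t_n\to+\infty$:} then $\chi(g_n)=e^{\mu t_n}\to\infty$, so $g_n$ maps the fixed gauge-ball $D(\tilde B)$ about $D(\tilde p)$ (with $\tilde B\subset\widetilde M$ a small ball) onto the gauge-ball about $p_n$ of radius $\chi(g_n)\to\infty$; these balls lie in $g_n(\Omega)=\Omega$, so $\Omega$ contains gauge-balls of arbitrarily large radius about points near $q_0$, hence $\Omega=\cN$ — contradicting $\Omega\subseteq\cN-I$. \emph{If $\nu_n\to\infty$ while $t_n$ stays bounded above:} then $\chi(g_n)=e^{\mu t_n}$ stays bounded, so $\kappa_n a_{t_n}(D(\tilde p))$ remains in a fixed ball about the $A$-fixed point $0$ (it is the image of $D(\tilde p)$ under a conformal map of bounded factor fixing $0$), and left translating it by the divergent $\nu_n$ drives $p_n$ to infinity in $\cN$ — contradicting $p_n\to q_0$. \emph{If $t_n\to-\infty$ with $\nu_n$ bounded (the only remaining case):} then $\chi(g_n)\to 0$, so, taking a compact $F\subset\widetilde M$ with $\tilde p\in F$ and $\pi(F)=M$, the maps $g_n$ shrink $D(F)$ — of bounded diameter and containing $p_n\to q_0$ — into an arbitrarily small neighbourhood $W$ of $q_0$, so $\tilde f_n(F)\subset D^{-1}(W)$; since $\tilde f_n$ normalizes $\pi_1 M$ and $F$ is a fundamental domain, $\pi_1 M\cdot\tilde f_n(F)=\widetilde M$, and $\rho$-equivariance of $D$ turns this into $\Gamma\cdot W=\Omega$, which is impossible for $W$ small since $\Gamma$ acts properly discontinuously on the positive-dimensional $\Omega$ (then $\Gamma\cdot W$ is merely a neighbourhood of the discrete closed set $\Gamma\cdot q_0$). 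All cases being excluded, $\Aut(M)$ is compact, i.e.\ acts properly, which is the contrapositive of the statement.

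The step I expect to be the genuine obstacle is not the dynamics above but the input it borrows: that in the incomplete case the holonomy $\Gamma$ acts properly discontinuously on $\Omega=\cN-I$ (the two-step-nilpotent analogue of the Fried--Miner picture underlying Theorem~\ref{thm-partialcomp}), and the correct set-up of the conformal factor $\chi$ in the two-step setting. Granted those, the passage from non-compactness of $\Aut(M)$ to a contradiction is soft; the rank-one hypothesis is used exactly where the $A$-direction is reduced to the single parameter $t_n$ read off by $\chi$. A subsidiary technical point worth isolating is the rigidity used in the transition $f_n\mapsto g_n$: arranging the lifts $\tilde f_n$ so that $\tilde f_n(\tilde p)$ converges, and the fact that a $(G_1,\cN)$-automorphism of $\widetilde M$ is pinned down by its value at one point together with its accompanying element of $G_1$.
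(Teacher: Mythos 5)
Your argument hinges on a structural premise that is not available for the geometries in the statement: you assume that a rank-one ray geometry carries a left-invariant homogeneous gauge for which $N$ and $K$ act by isometries and $A=\{a_t\}$ by dilations, so that every $g\in G_1$ has a single conformal factor $\chi(g)$. Definition~\ref{def-ray} only requires $A$ to act diagonally with real exponents $d_i$ that may have mixed signs (or vanish); the paper's own Example~1 on the Heisenberg group with $\beta_2=\beta_1^{-1}$, i.e.\ $a_t\colon (x,y,z)\mapsto(e^t x,e^{-t}y,z)$, is a rank-one ray geometry admitting no such gauge. Only the similarity geometries of Example~3 (all $d_i>0$) have the dilation structure you use. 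Because of this, your trichotomy breaks down: when $t_n\to+\infty$ the map $a_{t_n}$ need not send a fixed ball onto balls of radius tending to infinity (it can simultaneously contract other directions, so no contradiction with $\Omega\subsetneq\cN$ follows), when $t_n\to-\infty$ it need not shrink a fundamental domain into a small set, and in the mixed case boundedness of $t_n$ above does not keep $\kappa_na_{t_n}(D(\tilde p))$ bounded. The paper's proof is built precisely to handle this partially hyperbolic behaviour: it uses incompleteness to place $N(\Gamma)$ inside the stabilizer $I\rtimes KA_1$, the invariant splitting $\mathfrak n=I\oplus V$ with $V\subset E$ from lemma~\ref{lem-red-decomp}, and the rank-one Fried dynamic $T_{ji}$ to renormalize the $KA_1$-factor of $g_n$, then reads off convergence of the remaining pieces from the $I\oplus V$ coordinates — no conformal factor appears anywhere.

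The second gap is the one you flagged yourself: that $\Gamma$ acts properly discontinuously on $\Omega=\cN-I$ is not part of theorem~\ref{thm-partialcomp} and is not proved in the paper. Knowing that $D$ is a covering onto its image only identifies $\widetilde M$ with the universal cover of $\Omega$ and $\pi_1(M)$ with a group of lifts of elements of $G_1$ preserving $\Omega$; the projected group $\Gamma$ need not be discrete, let alone act properly discontinuously, and your third case ($t_n\to-\infty$) collapses without it, since $\Gamma\cdot W=\Omega$ for small $W$ is not by itself absurd for a non-discrete $\Gamma$. The paper sidesteps this by formulating non-properness directly as a sequence $g_n\in N(\Gamma)$ with $\Gamma g_n$ leaving every compact of $\Gamma\backslash N(\Gamma)$ together with $\Gamma g_nD(x_n)\to\Gamma D(y)$, and derives the contradiction inside $I\rtimes KA_1$. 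So, as written, your proof establishes the theorem only for similarity-type rank-one geometries and modulo an unproved discreteness input; to cover the general case you would need to replace the conformal-factor dynamics by an argument using the $E\oplus P\oplus F$ (and $I\oplus V$) decompositions, which is essentially what the paper does.
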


\paragraph{Organization of the paper}In section~\ref{sec-2} we present the general setting of ray geometries. In section~\ref{sec-3} we construct explicitly such geometries as subgeometries living in a flag manifold. This construction is general to any parabolic geometry. In section~\ref{sec-4} we explain how to derive the proof of theorem~\ref{thm-partialcomp} from~\cite{Ale3} and additional arguments related to the nilpotency. In section~\ref{sec-5} we study the reducibility of ray manifolds and prove theorems~\ref{thm-markus} and~\ref{thm-auto}.

\paragraph{Acknowledgment}This work is part of the author's doctoral thesis, under the supervision of Elisha Falbel. The author also enjoyed many conversations with Martin Mion-Mouton and Charles Frances on parabolic geometries.

\section{Nilpotent spaces with ray geometries}\label{sec-2}

Let $N$ be a simply connected nilpotent Lie group. Denote $\mathfrak n$ its Lie algebra.
The correspondance $\exp\colon\mathfrak n\to N$ is a diffeomorphism and we have the Baker-Campbell-Hausdorff formula:
 \begin{equation}
\exp(x)\exp(y) = \exp\left(x+y+\frac 12 [x,y] + \frac 1{12}([x,[x,y]] - [y,[x,y]])+\dots\right).
\end{equation}

\begin{definition}
A  \emph{nil-affine subspace} of $N$ is a left translation $L_x\exp(V)$ of the exponential in $N$ of a linear subspace $V\subset\mathfrak n$.
\end{definition}

In particular when $V$ is a line of $\mathfrak n$ we can deduce a notion of geodesic curve.

\begin{definition}
A \emph{geodesic segment}  in $N$ is the left-translation of a curve $\exp(tv)$ for $v\in\mathfrak{n}$ (the Lie algebra of $N$) and $t\in \iI$.
\end{definition}

\begin{lemma}
When $N$ is at most two-step nilpotent, the geodesic convex subsets are the affine convex subsets.
\end{lemma}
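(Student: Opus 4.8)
The plan is to identify $N$ with $\mathfrak n$ through the diffeomorphism $\exp$ and to show that, under this identification, every geodesic segment is a straight segment and conversely. Once this is established the two notions of convexity coincide formally: a subset is affine convex (resp.\ geodesic convex) exactly when, for every pair of its points, the straight (resp.\ geodesic) segment joining them is contained in it, and these two segments are the same set.

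First I would reduce to segments with prescribed endpoints. A geodesic segment is by definition a set $\gamma=\{g\exp(tv):t\in\iI\}$ with $g\in N$ and $v\in\mathfrak n$. If $\exp(a),\exp(b)\in\gamma$, say $\exp(a)=g\exp(sv)$ and $\exp(b)=g\exp(s'v)$ with $s\le s'$, then using $\exp(tv)=\exp(sv)\exp((t-s)v)$ (valid since $v$ commutes with itself) the sub-arc of $\gamma$ joining these two points rewrites, after an affine reparametrisation, as $\{\exp(a)\exp(\tau w):\tau\in\iI\}$ with $w=(s'-s)v$ and $\exp(a)\exp(w)=\exp(b)$. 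So it is enough to describe such endpoint-parametrised arcs.

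The computation then uses the two-step Baker--Campbell--Hausdorff formula $\exp(x)\exp(y)=\exp\!\left(x+y+\tfrac12[x,y]\right)$ together with the identity $[\mathfrak n,[\mathfrak n,\mathfrak n]]=0$. From $\exp(a)\exp(w)=\exp(b)$ one obtains $w=b-a-\tfrac12[a,b]$, hence $[a,w]=[a,b]$ (the iterated bracket $[a,[a,b]]$ vanishes by two-step nilpotency), and therefore
\[
\exp(a)\exp(tw)=\exp\!\left(a+tw+\tfrac t2[a,w]\right)=\exp\!\left(a+t(b-a)-\tfrac t2[a,b]+\tfrac t2[a,b]\right)=\exp\!\left(a+t(b-a)\right).
\]
Thus the geodesic segment joining $\exp(a)$ to $\exp(b)$ is precisely the image under $\exp$ of the straight segment from $a$ to $b$, which gives the lemma.

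The two-step hypothesis enters exactly once, to kill the term $[a,[a,b]]$: for abelian $N$ the statement is trivial, while for higher nilpotency order geodesics are genuinely curved, so this is the sharp range. The only point that needs a little care is the reduction to endpoint-parametrised arcs and the bookkeeping of the BCH terms; beyond that the argument is a one-line computation.
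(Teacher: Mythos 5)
Your proof is correct and rests on the same idea as the paper's: a two-step Baker--Campbell--Hausdorff computation showing that geodesic segments are exactly the $\exp$-images of straight segments in $\mathfrak n$, whence the two convexity notions coincide. The paper phrases it as ``left translation preserves affine segments'' while you solve explicitly for the geodesic between two prescribed endpoints, but this is only a cosmetic rearrangement of the same calculation.
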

\begin{proof} 
The Baker-Campbell-Hausdorff formula shows that if $\exp(a+tv)$ is an affine segment then any left-translation $\exp(x)\exp(a+tv) = \exp(x+a+tv + 1/2 [x,a+tv]) = \exp(x+a+ 1/2 [x,a] + t(v + 1/2 [x,v]))$ is again an affine segment.
\end{proof}

\begin{definition}
Let $D\colon \widetilde M \to N$ be a local diffeomorphism. A curve $\gamma\colon \iI \to \widetilde M$ is \emph{geodesic} if $D(\gamma)$ is a geodesic segment in $N$.

For any $p\in \widetilde M$, define $V_p\subset \rT_p\widetilde M$  the set of the vectors such that there exists a geodesic segment $\gamma\colon \iI \to \widetilde M$ with $\gamma(0)=p$, $\gamma'(0)\in V_p$.  We say that $V_p$ is the \emph{visibility set} from (or of) $p\in \widetilde M$.
\end{definition}

\begin{definition}
Let $D\colon\widetilde M \to N$ be a local diffeomorphism. A subset $C\subset \widetilde M$ is \emph{convex} if $D$ is injective on $C$ and $D(C)$ is convex. If $x\in\widetilde M$ and $y\in N$, we say that $y$ \emph{is visible from} $x$ if there exists a geodesic segment from $x$ to $z\in\widetilde M$ such that $D(z)=y$.
\end{definition}

\begin{lemma}
The visible set $V_p\subset \rT_p\widetilde M$ is open.
\end{lemma}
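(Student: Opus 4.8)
The plan is to realise geodesic segments as integral curves of vector fields pulled back through $D$, and then to invoke the openness of the flow domain of an ODE with respect to initial conditions and parameters. First, note that geodesic segments in $N$ are flow lines of left-invariant vector fields: for $w\in\mathfrak n$ let $X_w$ be the left-invariant vector field with $X_w(e)=w$; then $\frac{d}{dt}\bigl(q\exp(tw)\bigr)=X_w\bigl(q\exp(tw)\bigr)$, so $t\mapsto L_q\exp(tw)$ with $t\in\iI$ is exactly the integral curve of $X_w$ issued from $q$, run for unit time. Consequently a curve $\gamma\colon\iI\to\widetilde M$ with $\gamma(0)=p$ is geodesic with $\gamma'(0)=v$ if and only if $D\circ\gamma$ is the geodesic segment of $N$ with initial point $D(p)$ and initial velocity $dD_p(v)$; writing $dD_p(v)=(dL_{D(p)})_e(\bar v)$ defines a linear isomorphism $v\mapsto\bar v$ from $\rT_p\widetilde M$ to $\mathfrak n$, and the relevant geodesic segment of $N$ is $t\mapsto D(p)\exp(t\bar v)$.

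Next, pull these vector fields back to $\widetilde M$. Since $D$ is a local diffeomorphism, for each $w\in\mathfrak n$ the formula $\widetilde X_w(x):=(dD_x)^{-1}\bigl(X_w(D(x))\bigr)$ defines a smooth vector field on $\widetilde M$ that is $D$-related to $X_w$, and $(x,w)\mapsto\widetilde X_w(x)$ is jointly smooth because $X_w(q)=(dL_q)_e(w)$ is smooth in $q$ and linear in $w$. Form the smooth vector field $Y(x,w)=(\widetilde X_w(x),0)$ on $\widetilde M\times\mathfrak n$. Its local flow $\Phi$ is defined on an open subset $\mathcal D\subset\R\times\widetilde M\times\mathfrak n$ containing $\{0\}\times\widetilde M\times\mathfrak n$ and is smooth there, and for fixed initial data the set of admissible times is an interval containing $0$. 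Since $Y$ fixes the $\mathfrak n$-coordinate, $\Phi_t(x,w)=(\psi_t(x,w),w)$ where $t\mapsto\psi_t(x,w)$ is the integral curve of $\widetilde X_w$ through $x$; as $\widetilde X_w$ is $D$-related to $X_w$, this curve projects by $D$ to $t\mapsto D(x)\exp(tw)$ wherever it is defined (both sides being integral curves of $D$-related fields with matching initial point, so they agree by uniqueness), and in particular its initial velocity at $x$ with $\bar w=w$... more precisely when $w=\bar v$ its velocity at $p$ is $v$.

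It remains to combine these observations. A vector $v\in\rT_p\widetilde M$ lies in $V_p$ precisely when the integral curve of $\widetilde X_{\bar v}$ issued from $p$ is defined on all of $\iI$, equivalently when $(1,p,\bar v)\in\mathcal D$ (using that the admissible-time set is an interval through $0$). As $v\mapsto(1,p,\bar v)$ is continuous, being affine with linear part $v\mapsto\bar v$, and $\mathcal D$ is open, $V_p=\{v\in\rT_p\widetilde M:(1,p,\bar v)\in\mathcal D\}$ is open. The only bookkeeping point is to check that an integral curve of $\widetilde X_{\bar v}$ defined on $\iI$ genuinely is a geodesic segment in the sense of the definition, i.e.\ that its $D$-image is the prescribed left-translated one-parameter arc and its initial velocity is $v$; this is exactly the $D$-relatedness remark of the previous paragraph. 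I expect no real obstacle here: the statement is the standard continuous dependence of ODE flows on initial conditions and parameters, transported through the local diffeomorphism $D$; the only mild care needed is to organise the parameter $w\in\mathfrak n$ into a single vector field on $\widetilde M\times\mathfrak n$ so that the flow-domain openness applies directly.
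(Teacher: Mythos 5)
Your proof is correct, but it follows a genuinely different route from the paper's. The paper argues by compactness and convexity: it covers the geodesic $\gamma$ by finitely many convex open sets $U_i\subset\widetilde M$, observes that geodesic segments of $N$ close to $D(\gamma)$ and issued from $D(p)$ stay inside $\bigcup D(U_i)$, and lifts them back to geodesic segments of $\widetilde M$; openness of $V_p$ then comes from the finiteness of the cover. You instead encode geodesics as integral curves of the pulled-back left-invariant fields $\widetilde X_w=(dD)^{-1}X_w\circ D$, bundle the direction $w$ into a single smooth field on $\widetilde M\times\mathfrak n$, and read off openness of $V_p$ from the openness of the maximal flow domain $\mathcal D$, via the identification $v\in V_p\iff(1,p,\bar v)\in\mathcal D$ (the only point you gloss over, correctly, is that a solution defined on the closed interval $\iI$ forces $1$ to lie in the open maximal interval, by the standard escape lemma). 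Your approach buys a completely standard and quantifiable openness statement with no lifting or injectivity bookkeeping, and it does not use local convexity of $N$ or the notion of convex sets at all; the paper's approach stays inside the convexity toolkit (convex charts, deformation of developed geodesics) that it reuses repeatedly in the rest of the argument, and in particular also records that $V_p$ is a neighborhood of $0$, which your flow argument recovers automatically from the constant solution.
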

\begin{proof}
It is necessarily a neighborhood of $0$ since $D$ is a local diffeomorphism and $N$ is locally convex.

Consider $v \in V_p$ and its associated geodesic segment $\gamma$. Cover $\gamma$ by finitely many convex opens $U_i$. Then $D(\gamma)$ can be continuously deformed into geodesic segments in $\bigcup D(U_i)$. They correspond to geodesic segments of $\widetilde M$ if they are always entirely contained in $\bigcup D(U_i)$. Since the $U_i$ are in finite number, it can be done for an open set around $v\in \rT_p\widetilde M$.
\end{proof}

Following the convexity arguments of Carrière~\cite{Carriere} (see also~\cite{Koszul,Benzecri}) we obtain:

\begin{proposition}[\cite{Ale}]
Let $D\colon\widetilde M \to N$ be a local diffeomorphism.
\begin{itemize}
\item
If $C_1,C_2$ are convex subsets in $\widetilde M$ and $C_1\cap C_2\neq \emptyset$, then $D$ is injective on $C_1\cup C_2$.
\item 
Let $C$ be convex and containing $p\in \widetilde M$. Then $C\subset \exp_p(V_p)$.
\item $D$ is injective on $\exp_p(V_p)$ for any $p\in \widetilde M$.
\item If for $p\in \widetilde M$, $\exp_p(V_p)$ is convex then $\exp_p(V_p)=\widetilde M$.
\item $D$ is  a diffeomorphism if and only if for any $p\in\widetilde M$, $V_p= \rT_p\widetilde M$.
\end{itemize}
\end{proposition}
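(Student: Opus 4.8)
The plan is to reduce the whole statement to two facts available here. First, by the preceding lemma (and its proof), when $N$ is commutative or two-step nilpotent every geodesic segment is an affine segment $t\mapsto\exp(a+tv)$; hence between two given points of $N$ there is a \emph{unique} geodesic segment, and a geodesic segment whose image lies in a convex open $\Omega\subset N$ is exactly the preimage, through a local inverse of $D$, of an affine segment in $\Omega$. Second, $D$ being a local diffeomorphism has the unique path-lifting property, so a geodesic segment $\sigma\colon\iI\to N$ with $\sigma(0)=D(p)$ admits at most one lift starting at $p$. Writing $\exp_p(v)$ for the endpoint of the unique geodesic segment issued from $p$ with initial velocity $v$, the set $V_p$ of admissible $v$ is open (preceding lemma) and star-shaped at $0$ (reparametrise), and $E_p:=D\circ\exp_p$ is the map $v\mapsto L_{D(p)}\exp(w(v))$ for a linear isomorphism $w\colon\rT_p\widetilde M\to\mathfrak n$, hence extends to a diffeomorphism $\rT_p\widetilde M\xrightarrow{\ \sim\ }N$.

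Granting this, the first three items are formal. For the overlapping-convex item, pick $q\in C_1\cap C_2$; if $D(x)=D(y)$ with $x\in C_1$, $y\in C_2$ (the only nontrivial case), the affine segments $[D(q),D(x)]\subset D(C_1)$ and $[D(q),D(y)]=[D(q),D(x)]\subset D(C_2)$ lift through $D|_{C_1}$ and $D|_{C_2}$ to geodesic segments from $q$ to $x$ and from $q$ to $y$; these are two lifts of one geodesic segment based at $q$, hence equal, so $x=y$. If $C\ni p$ is convex, then for $x\in C$ the curve $(D|_C)^{-1}([D(p),D(x)])$ is a geodesic segment from $p$ to $x$, so $x\in\exp_p(V_p)$, giving the second item. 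If $x,y\in\exp_p(V_p)$ with $D(x)=D(y)$, the geodesic segments from $p$ realising them develop to the same affine segment $[D(p),D(x)]$, hence coincide, so $x=y$, giving the third. In particular $\exp_p$ is injective on $V_p$ and $D$ is injective on $C:=\exp_p(V_p)$; with $V_p$ open and $E_p$ a diffeomorphism, both $\exp_p\colon V_p\to C$ and $D\colon C\to D(C)$ are homeomorphisms, and $D(C)=E_p(V_p)$ is open.

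For the fourth item I would argue by contradiction in Carrière's manner. Suppose $C=\exp_p(V_p)$ is convex but $C\neq\widetilde M$; being open with $\widetilde M$ connected, $\partial C\neq\emptyset$, so choose $q\in\partial C$ and $r_n\in C$ with $r_n\to q$. Since $E_p$ is a diffeomorphism onto $N$ and $E_p(\exp_p^{-1}r_n)=D(r_n)\to D(q)$, the vectors $v_n:=\exp_p^{-1}(r_n)$ converge to $v_*:=E_p^{-1}(D(q))\in\overline{V_p}$. If $D(q)\in D(C)$ then $q=\lim r_n=\lim(D|_C)^{-1}(D(r_n))=(D|_C)^{-1}(D(q))\in C$, impossible; so $D(q)\in\partial D(C)$, and since $D(C)$ is convex open with $D(p)\in D(C)$, the half-open affine segment $\sigma_*([0,1))$, where $\sigma_*(t):=E_p(tv_*)$, lies in $D(C)$ while $\sigma_*(1)=D(q)$. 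The geodesic segments $t\mapsto\exp_p(tv_n)$ run from $p$ to $r_n$ and develop to $\sigma_n(t):=E_p(tv_n)\to\sigma_*(t)$ uniformly on $\iI$; for $t<1$ they equal $(D|_C)^{-1}(\sigma_n(t))\to(D|_C)^{-1}(\sigma_*(t))\in C$, while at $t=1$ they equal $r_n\to q$. A standard compactness argument — the developed segments form a convergent family, convexity keeps them in the open set $D(C)$ for $t<1$, and the endpoints already converge to $q\in\widetilde M$ — then shows these lifts converge to a geodesic segment $\iI\to\widetilde M$ from $p$ to $q$; hence $v_*\in V_p$ and $q=\exp_p(v_*)\in C$, contradicting $q\in\partial C$. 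Thus $C=\widetilde M$. The last item follows: if $D$ is a diffeomorphism every geodesic segment from $D(p)$ pulls back, so $V_p=\rT_p\widetilde M$; conversely, if $V_p=\rT_p\widetilde M$ for all $p$, fix $p$, so $D$ is injective on $\exp_p(\rT_p\widetilde M)$ by the third item and $D(\exp_p(\rT_p\widetilde M))=E_p(\rT_p\widetilde M)=N$ is convex, whence $\exp_p(\rT_p\widetilde M)$ is convex, hence all of $\widetilde M$ by the fourth item, making $D$ a bijective local diffeomorphism, i.e.\ a diffeomorphism.

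The main obstacle is precisely the fourth item, and within it the passage from ``the developed geodesic from $D(p)$ to $D(q)$ is defined'' to ``it lifts to a geodesic segment of $\widetilde M$ ending exactly at $q$''. The convexity hypothesis is what makes this work: it confines the developed segment to the open set $D(C)$ for all times $t<1$, so the lifts of the approximating segments $\sigma_n$ do not degenerate on $[0,1)$, and since their endpoints $r_n$ already converge to $q$ the limiting lift is forced to close up at $q$. Everything else is bookkeeping with unique path lifting once geodesic segments have been identified with affine segments — which is exactly where the two-step hypothesis enters, through the earlier lemma.
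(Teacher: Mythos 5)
The paper itself offers no proof of this proposition: it is imported from~\cite{Ale} with a pointer to Carrière-type convexity arguments, and your argument is precisely that standard argument (unique path lifting for the local diffeomorphism $D$, the identification of $E_p=D\circ\exp_p$ with a global diffeomorphism $\rT_p\widetilde M\to N$ via $v\mapsto L_{D(p)}\exp(w(v))$, and the boundary-point argument for the fourth item). Items three and five are clean, and your sketch for item four is completable exactly as you indicate: work in a trivializing convex neighbourhood $U$ of $q$; by unique path lifting the tails of the lifted segments $t\mapsto\exp_p(tv_n)$ must coincide with $(D|_U)^{-1}\circ\sigma_n$ near $t=1$ because they end at $r_n\in U$, and passing to the limit there closes up the lift of $\sigma_*$ at $q$.

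The one step you assert rather than prove --- and it is where the actual content sits --- is that a segment of $D(C)$ joining images of two points of a convex $C$ lifts \emph{inside} $C$, i.e.\ that $(D|_C)^{-1}$ is continuous along it. With convexity taken literally as defined in the paper ($D$ injective on $C$ and $D(C)$ convex), this is not automatic: a continuous injection need not be a homeomorphism onto its image, and $C$ could be the image of a discontinuous section of $D$ over a convex set (for instance, two complementary pieces of a convex set lifted to different sheets of a covering $\widetilde M\to D(\widetilde M)$), in which case the lift of $[D(q),D(x)]$ through $D|_{C_1}$ leaves $C_1$ and your proofs of the first two items collapse --- indeed the first item as literally stated fails for such a $C_1$. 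So either the definition must be read as requiring $D|_C$ to be a homeomorphism onto its image (equivalently, that geodesics of $\widetilde M$ between points of $C$ stay in $C$), which is what \cite{Ale} and \cite{Carriere} intend and which is automatic for the compact convex neighbourhoods actually used later in the paper, or you must justify the continuity where you use it: for compact $C$ it is the continuous-bijection-from-a-compact-space argument, and for $C=\exp_p(V_p)$ the continuity and openness of $\exp_p$ (which you invoke through the phrase ``both are homeomorphisms'') require the same finite-cover-by-convex-opens lifting argument the paper uses to prove that $V_p$ is open. With that point made explicit, your proof is the intended one.
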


\begin{definition}
A manifold $M$ is a $(G,X)$-manifold for $G$ a group acting on a space $X$ if there exists $(D,\rho)$ a pair of a local diffeomorphism $D\colon\widetilde M \to X$ (called the developing map) and a morphism $\rho\colon\pi_1(M)\to G$ (called the holonomy morphism) such that
\begin{equation}
\forall \gamma\in\pi_1(M)\forall x\in\widetilde M, \; D(\gamma\cdot x)=\rho(\gamma)D(x).
\end{equation}

Let $X=\cN$ be a nilpotent Lie group and $\GL_{\Aut}(\mathfrak n)$ be its largest linear automorphism group. Let $\Aff(N) = N\rtimes \GL_{\Aut}(\mathfrak n)$. We say that $(\Aff(N),\cN)$ is the \emph{nil-affine geometry} of $\cN$ and if $M$ has a $(G,\cN)$-structure then it is  a  \emph{nil-affine manifold}.
\end{definition}

\begin{definition}
Let $M$ be a nil-affine manifold. It is \emph{complete} if $D\colon\widetilde M \to \cN$ is a diffeomorphism.
\end{definition}

\vskip10pt
They are several ways to describe ray geometries on a nilpotent space. We give here a simple axiomatic description.

\begin{definition} \label{def-ray}
Let $N$ be a simply connected nilpotent Lie group with Lie algebra $\mathfrak n$.
We assume that $\mathfrak n$ is graded:
\begin{equation}
\mathfrak n = \mathfrak n_1\oplus\dots\oplus\mathfrak n_k
\end{equation}
with $[\mathfrak n_i,\mathfrak n_j]\subset \mathfrak n_{i+j}$.

A \emph{ray geometry} on the space $\cN=N$ is a pair $(G,\cN)$ with $G= N\rtimes KA$ and $KA\subset \GL_{\Aut}(\mathfrak n)$  such that the following conditions are verified.
\begin{enumerate}
\item The subgroup $A$ is isomorphic to a multiplicative group $(\R_+^*)^r$ with $r=\dim A$ called the \emph{rank of the ray geometry}. The subgroup $K$  is compact and centralizes $A$.
\item There exists a basis $(e_1,\dots,e_n)$ such that each $e_i$ belongs to a single $\mathfrak n_j$.
And in respect to the basis $(e_1,\dots,e_n)$, there exists a linear isomorphism $(\alpha_1,\dots,\alpha_r)\colon\mathfrak a \to \R^r$ and real numbers $d_{i,j}$  such that for any $\exp(a)\in A\subset \GL_{\Aut}(\mathfrak n)$:
\begin{equation}
\exp(a)=
\begin{pmatrix}
\exp(\alpha_1(a))^{d_{1,1}}\cdots\exp(\alpha_r(a))^{d_{r,1}} \\ 
&\ddots \\
 &&\exp(\alpha_1(a))^{d_{1,n}}\cdots\exp(\alpha_r(a))^{d_{r,n}}\label{eq-dilatationmatrix}
\end{pmatrix}
.
\end{equation}
\end{enumerate}
\end{definition}

The second hypothesis on  the basis $(e_1,\dots,e_n)$ allows to apply the following result. In general, one could ask under which condition a linear decomposition $\mathfrak n = L_1\oplus L_2$ of any nilpotent Lie algebra gives a decomposition $N=\exp(L_1)\exp(L_2)$. In Bourbaki~\cite[Prop. 17, p. 237]{Bourbaki} it is shown that:

\begin{lemma}[\cite{Bourbaki}]\label{lem-decomp}
Let $\mathfrak n$ be any nilpotent Lie algebra.
Consider $\mathfrak n=\mathfrak n^0\supset \mathfrak n^1 \supset\dots\supset \mathfrak n^m = \{0\}$ any sequence of ideals of $\mathfrak n$ such that $[\mathfrak n,\mathfrak n^i]\subset \mathfrak n^{i+1}$.

Consider a  linear decomposition $\mathfrak n = L_1\oplus L_2$. 
If for every $\mathfrak n^i$ we have $\mathfrak n^i = (\mathfrak n^i\cap L_1)\oplus (\mathfrak n^i\cap L_2)$ then 
for any $x\in N$ there exists a unique decomposition 
\begin{equation}
x = \exp(x_1) \exp(x_2)
\end{equation}
with  $x_1\in L_1$ and $x_2\in L_2$.
\end{lemma}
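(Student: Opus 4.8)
The plan is to prove the equivalent statement that the map $\Phi\colon L_1\times L_2\to N$, $(x_1,x_2)\mapsto\exp(x_1)\exp(x_2)$, is a bijection, arguing by induction on the length $m$ of the filtration $\mathfrak n=\mathfrak n^0\supset\dots\supset\mathfrak n^m=\{0\}$. For $m=1$ the bracket condition reads $[\mathfrak n,\mathfrak n]\subset\mathfrak n^1=\{0\}$, so $\mathfrak n$ is abelian, $\exp$ is a group isomorphism $\mathfrak n\to N$, and $\Phi(x_1,x_2)=\exp(x_1+x_2)$; since $\mathfrak n=L_1\oplus L_2$ this is visibly a bijection.

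For the inductive step I would exploit that $[\mathfrak n,\mathfrak n^{m-1}]\subset\mathfrak n^m=\{0\}$, so $\mathfrak n^{m-1}$ is a central ideal and $Z:=\exp(\mathfrak n^{m-1})$ is a closed central (hence normal) subgroup; the quotient $\bar N:=N/Z$ is again a simply connected nilpotent Lie group, with Lie algebra $\bar{\mathfrak n}:=\mathfrak n/\mathfrak n^{m-1}$. Setting $\bar{\mathfrak n}^i:=\mathfrak n^i/\mathfrak n^{m-1}$ for $0\le i\le m-1$ (meaningful since $\mathfrak n^{m-1}\subset\mathfrak n^i$ in that range) yields a filtration of length $m-1$ still satisfying $[\bar{\mathfrak n},\bar{\mathfrak n}^i]\subset\bar{\mathfrak n}^{i+1}$. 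Writing $L_j'$ for the image of $L_j$ in $\bar{\mathfrak n}$, I would check that $\bar{\mathfrak n}=L_1'\oplus L_2'$ (directness uses the splitting $\mathfrak n^{m-1}=(\mathfrak n^{m-1}\cap L_1)\oplus(\mathfrak n^{m-1}\cap L_2)$) and that $\bar{\mathfrak n}^i=(\bar{\mathfrak n}^i\cap L_1')\oplus(\bar{\mathfrak n}^i\cap L_2')$: an element of $\bar{\mathfrak n}^i\cap L_j'$ has a representative in $\mathfrak n^i$ and one in $L_j$, which differ by an element of $\mathfrak n^{m-1}\subset\mathfrak n^i$, so the hypothesis $\mathfrak n^i=(\mathfrak n^i\cap L_1)\oplus(\mathfrak n^i\cap L_2)$ produces a representative in $\mathfrak n^i\cap L_j$. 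Thus the inductive hypothesis applies to $\bar N$: every $\bar x\in\bar N$ is uniquely $\exp(\bar x_1)\exp(\bar x_2)$ with $\bar x_j\in L_j'$.

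It then remains to lift this to $N$. Given $x\in N$, decompose its class $\bar x$ downstairs, lift $\bar x_j$ to arbitrary $y_j\in L_j$, so that $x$ and $\exp(y_1)\exp(y_2)$ differ by a central factor $\exp(c)$ with $c\in\mathfrak n^{m-1}$; split $c=c_1+c_2$ with $c_j\in\mathfrak n^{m-1}\cap L_j$. Since $\mathfrak n^{m-1}$ is central the $\exp(c_j)$ commute with everything and $\exp(y_j)\exp(c_j)=\exp(y_j+c_j)$ (because $[y_j,c_j]=0$), so shuffling the central factors into place gives $x=\exp(y_1+c_1)\exp(y_2+c_2)$ with $y_j+c_j\in L_j$, proving existence. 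For uniqueness, two such expressions of $x$ project to the same expression of $\bar x$, so their $L_1$- and $L_2$-components differ by elements $u_j\in\mathfrak n^{m-1}\cap L_j$; the same centrality manipulation forces $\exp(u_1+u_2)=e$, hence $u_1=-u_2\in L_1\cap L_2=\{0\}$.

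The step I expect to require the most care is the inductive bookkeeping, namely verifying that the pushed-down data $(\bar{\mathfrak n}^i,L_1',L_2')$ still satisfies the compatibility hypothesis — this genuinely uses the splitting of every $\mathfrak n^i$, not merely of the central term $\mathfrak n^{m-1}$, to promote a downstairs representative to one lying in $\mathfrak n^i$ — together with keeping track of the central exponentials during the lifting. Once the central ideal is isolated, the Baker-Campbell-Hausdorff formula trivializes there, so no real computation is involved beyond that.
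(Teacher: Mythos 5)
Your proof is correct and follows essentially the same route the paper sketches (and attributes to Bourbaki): induction with a commutative base case, quotienting by a central subgroup --- here $\exp(\mathfrak n^{m-1})$, which is central because $[\mathfrak n,\mathfrak n^{m-1}]\subset\mathfrak n^m=\{0\}$ --- and then lifting existence and uniqueness using the splitting of $\mathfrak n^{m-1}$ into its $L_1$- and $L_2$-parts. The only cosmetic difference is that you induct on the filtration length rather than the dimension, which changes nothing of substance.
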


The idea of the proof is to proceed by induction on the dimension, by taking the quotient by the centralizer. (On a commutative space we always have the property.)

In particular, in our context, one can consider 
\begin{equation}
\mathfrak n^i = \mathfrak n_i \oplus \mathfrak n_{i+1}\oplus \dots\oplus \mathfrak n_k
\end{equation}
as sequence of decreasing ideals of $\mathfrak n$. The hypothesis $\mathfrak n^i = (\mathfrak n^i\cap L_1)\oplus (\mathfrak n^i\cap L_2)$ is necessarily verified if
\begin{equation}
\mathfrak n_i = (\mathfrak n_i\cap L_1)\oplus (\mathfrak n_i\cap L_2).
\end{equation}
So if we were to consider $L_1$ and $L_2$ to be subspaces generated by subfamilies of $(e_1,\dots,e_n)$, the property would be verified.

\vskip10pt
A related consideration that is important in $\mathfrak n$ is:
\begin{equation}
\Ad_{\exp(x)} = \exp(\ad_x).
\end{equation}
In consequence, we have in $N$:
\begin{align}
\exp(x) \exp(y) &= \exp\left(\sum_{n\geq 0} \frac{1}{n!}\ad_x^n(y) \right)\exp(x)\\
&=\exp\left( y + [x,y] + \frac{1}{2}[x,[x,y]] + \frac{1}{3!}[x,[x,[x,y]]] + \dots \right)\exp(x).
\end{align}

\paragraph{Limit sets}
Later  we will need to study sequences of subsets that have a limit.

\begin{definition}
Let $B_n$ be a sequence of  subsets. Its \emph{limit set}, denoted $\lim B_n$, is the set of the points $\lim x_n$ for sequences $x_n\in B_n$ (with $x_n$ chosen for each $B_n$).
\end{definition}

\begin{proposition}\label{prop-visconv}
Let $g_{i}\in \pi_1(M)$ be a sequence of transformations and $S\subset \widetilde M$ be closed and convex. Assume that $g_{i}S$ has a limit point $y\in \widetilde M$. Consider $B_\infty = \lim D(g_{i}S) = \lim \rho(g_i) D(S)$ the limit set in the developing map.  Then $B_\infty$ is closed, convex and there exists a closed and  convex subset $S_\infty$  containing $y$ such that $D(S_\infty)=B_\infty$.
\end{proposition}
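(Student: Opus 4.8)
The plan is to verify each claimed property of $B_\infty$ in turn, extracting a convex limiting subset $S_\infty$ in the last step. First I would check that $B_\infty$ is closed: this is automatic from the definition of a limit set, since a diagonal-extraction argument shows that a limit of points $\lim_k z_k$ with each $z_k = \lim_i x_i^{(k)}$, $x_i^{(k)}\in \rho(g_i)D(S)$, is again of the form $\lim_i x_i$ with $x_i \in \rho(g_i)D(S)$. Next, convexity of $B_\infty$: take two points $b,b'\in B_\infty$, say $b=\lim \rho(g_i)a_i$ and $b'=\lim \rho(g_i)a_i'$ with $a_i,a_i'\in D(S)$. Since $S$ is convex and $D$ is injective on $S$, the set $D(S)$ is convex, so the geodesic segment from $a_i$ to $a_i'$ lies in $D(S)$; applying the affine transformation $\rho(g_i)\in G = N\rtimes KA$ (which, by the Lemma on two-step nilpotent spaces, preserves geodesic/affine convexity — one should note here that $G\subset \Aff(N)$ so it carries affine segments to affine segments) gives that the segment from $\rho(g_i)a_i$ to $\rho(g_i)a_i'$ lies in $\rho(g_i)D(S)$. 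Passing to the limit, any point on the segment from $b$ to $b'$ is a limit of points in $\rho(g_i)D(S)$, hence lies in $B_\infty$.

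For the last and main part, I would use the hypothesis that $g_i S$ has a limit point $y\in\widetilde M$, i.e. there exist $s_i\in S$ with $g_i s_i \to y$. Set $a_i = D(s_i)\in D(S)$, so $\rho(g_i)a_i = D(g_i s_i)\to D(y)$; thus $D(y)\in B_\infty$ and $y$ is a natural basepoint. The idea is to transport convex neighborhoods of $y$ backwards: pick a small convex open $U\ni y$ in $\widetilde M$ on which $D$ is a diffeomorphism onto a convex set; then for $i$ large $g_i s_i\in U$, and I want to define $S_\infty$ roughly as a maximal convex subset "swept out" near $y$ by the sets $g_i^{-1}(g_i S\cap U) = S\cap g_i^{-1}U$, i.e. by the limits of the $g_i S$ inside $\widetilde M$. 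Concretely: consider the local inverse $D_U^{-1}\colon D(U)\to U$; for each point $b\in B_\infty\cap D(U)$ define the candidate preimage $D_U^{-1}(b)\in\widetilde M$, and let $S_\infty$ be the closure of the set of all points of $\widetilde M$ reachable from $y$ by geodesic segments whose developed image stays in $B_\infty$. Then $D(S_\infty)\subseteq B_\infty$ by construction, and the reverse inclusion follows from the convexity of $B_\infty$ together with the third bullet of the cited Proposition (injectivity of $D$ on $\exp_p(V_p)$): since $B_\infty$ is convex and $D(y)\in B_\infty$, every point of $B_\infty$ is visible from $y$, so lies in $D(\exp_y(V_y))$, and the convex set $D^{-1}(B_\infty)\cap\exp_y(V_y)$ on which $D$ is injective furnishes $S_\infty$ with $D(S_\infty)=B_\infty$; closedness and convexity of $S_\infty$ then descend from those of $B_\infty$ via this injectivity.

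The main obstacle I anticipate is the last step, namely producing a \emph{single} convex subset $S_\infty\subset\widetilde M$ that both maps onto all of $B_\infty$ and contains $y$ — not merely a germ near $y$. The subtlety is that $B_\infty$ could a priori be large while $\widetilde M$ fails to contain a convex lift of it; one must use that $B_\infty$ is the limit of the \emph{convex} images $\rho(g_i)D(S)$, so that any segment in $B_\infty$ from $D(y)$ is approximated by segments $D(g_i\sigma_i)$ with $\sigma_i\subset S$ geodesic from $s_i$, and these live in $\widetilde M$; a compactness/finite-cover argument (in the spirit of the proof that $V_p$ is open, covering a compact segment by finitely many convex opens and deforming) then shows the limiting segment lifts to $\widetilde M$ starting at $y$. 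Assembling these lifted segments and invoking the injectivity of $D$ on $\exp_y(V_y)$ to see they fit together consistently yields the desired $S_\infty$. I would also need to double-check that $\rho(\pi_1(M))\subseteq G$ acts by nil-affine maps preserving geodesics, which is exactly the content of the earlier Lemma in the two-step case and is trivial in the commutative case.
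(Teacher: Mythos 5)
Your proposal is correct and follows essentially the paper's own route: closedness and convexity of $B_\infty$ come from taking limits of geodesic segments inside the convex sets $D(g_iS)=\rho(g_i)D(S)$, and the surjectivity of $D(S_\infty)\to B_\infty$ is obtained exactly as in the paper, by lifting the limiting segment from $D(y)$ as the limit of the geodesics $\gamma_i\subset g_iS$ joining $g_is_i\to y$ to points $z_i$ with $D(z_i)$ converging in $B_\infty$ (your intermediate remark that convexity of $B_\infty$ by itself yields visibility from $y$ would not stand alone, but you flag it and close it with this same approximation argument). The only cosmetic difference is the packaging of $S_\infty$: the paper takes $S_\infty=\lim g_iS$ directly as a limit set in $\widetilde M$, whereas you assemble it from the lifted geodesics issued from $y$ inside $\exp_y(V_y)$ and invoke the injectivity of $D$ there, which yields the same conclusion.
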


\begin{proof}
Observe that by convexity, each $D(g_i S)$ is closed and convex. Therefore $B_\infty$ is again closed.
It is also convex. If two points $x_1,x_2$ belong to the limit set $B_\infty$ then we consider their associated converging sequences. We can form geodesics in each $g_i S$ that joins the points from each sequence. The limit of those geodesics exists, is geodesic and connects $x_1$ and $x_2$.

Similarly, $S_\infty=\lim g_i S$ is closed and convex.
We show its developing image covers $B_\infty$. Let $y_i\in g_i S$ be a sequence tending to $y$. Let $z\in B_\infty$ be the limit of $D(z_i)$ for $z_i\in g_i S$. The geodesics $\gamma_i$ from $y_i$ to $z_i$ are developed into $D(\gamma_i)$ and those tend to the geodesic from $D(y)$ to $D(z)$. This geodesic is compact (defined on $\iI$) and each $\gamma_i$ is completely visible. Therefore $\lim \gamma_i$ corresponds to a visible geodesic issued from $y$. It necessarily ends at $z$ by injectivity.
\end{proof}

\paragraph{Example 1}Let $\Heis$ be the three-dimensional Heisenberg group, with the Lie bracket $[X,Y]=Z$. Let $e_1$ be the direction of $X$, $e_2$ of $Y$ and $e_3$ of $Z$. Then the maximal group $A$ acting on $\Heis$ is
\begin{equation}
\begin{pmatrix}
\beta_1  \\
& \beta_2 \\
&&\beta_1\beta_2
\end{pmatrix}
\end{equation}
since we necessarily have $[\beta_1 X,\beta_2 Y] = \beta_1\beta_2 Z$. This ray geometry has many ray subgeometries of rank one by deciding any relation between $\beta_1$ and $\beta_2$. For example, the similarity geometry (as in~\cite{Ale}) is given by deciding $\beta_2=\beta_1$. The volume-preserving geometry is given by deciding $\beta_2 = \beta_1^{-1}$.

\paragraph{Example 2}Any semisimple Lie groupe $G$ has an Iwasawa decomposition $G= KAN$. Let $M$ be the subgroup of $K$ centralizing $A$, then the Borel subgroup $B=MAN$ is a ray geometry on $N$. 
Indeed, with a root space decomposition of $\mathfrak g$:
\begin{equation}
\mathfrak g = \mathfrak g_0 \oplus \bigoplus_{\alpha\in \Phi}g_\alpha
\end{equation}
the Lie algebra
 $\mathfrak n = \mathfrak n_+$ consisting of the subspaces corresponding to positive roots $\Phi_+\subset \Phi$  is  a graded nilpotent Lie algebra on which $\mathfrak m\oplus\mathfrak a = \mathfrak g_0$ acts and preserves each $\mathfrak g_\alpha$. We can find a basis of $\mathfrak n$ such that each $e_i$ belongs to a single $\mathfrak g_\alpha$.
The fact that $A$ acts diagonally on $\mathfrak n$ corresponds to the diagonal action of the (positive simple) roots: $[a,e_i] = \sum \alpha_{j}(a)d_{i,j} e_i$.

\paragraph{Example 3}
The similarity geometries defined in~\cite{Ale} are the ray geometries of rank one with the property that for every direction $e_i$, $d_{i}>0$. It is the case for any Carnot group.

\paragraph{Notations}
The action of $G$ on $N$ is given  by
\begin{equation}
(c,f)(x) = L_c(\exp(f(\ln(x)))).
\end{equation}
In order to simplify the notations, if $x\in N$ and $f\in KA$, we define
\begin{equation}
f(x)\coloneqq \exp(f(\ln(x))).
\end{equation}
We denote by $+$ the group law of $N$. We should be careful that $+$ is not commutative in general, in opposition with the addition of $\mathfrak n$. If $f$ is an automorphism of $\mathfrak n$, then
\begin{equation}
\forall x,y\in N, \; f(x+y)=f(x)+f(y).
\end{equation}
(Note that this is an example of the use of $+$ for the group law of $N$.)

For any $x\in N$ and any $t\in \R$, we define
\begin{equation}
tx = \exp(t\ln(x)).
\end{equation}
 Note that if $f$ is an automorphism of $\mathfrak n$, then  by linearity of $f$ on $\mathfrak n$,
 \begin{equation}
\forall x,y\in N, \forall t\in \R, \; f(x+ty) = f(x)+tf(y).
 \end{equation}

Therefore, for any nil-affine transformation $T\in N\rtimes  \GL_{\Aut}(\mathfrak n)$,
 we can express $T$ as 
\begin{equation}
T(x) = c + f(x)
\end{equation}
where $c\in N$ and $f\in \GL_{\Aut}(\mathfrak n)$. We can see that geodesics are preserved:
\begin{equation}
T(x + tv) = c + f(x+tv) = c + f(x) + f(tv) =( c+f(x) )+ tf(v).
\end{equation}
Note also that if we change the base point from $0=e\in N$ to any $y\in N$ then
\begin{equation}
T(x) = T(y-y+x)  = c + f(y) + f(-y+x),
\end{equation}
so $f$ is constant but $c$ is changed to $c+f(y)$. It characterizes the semi-direct product.

\section{Ray nil-affine geometries in flag manifolds}\label{sec-3}

Consider $G$ a non-compact semi-simple Lie group with finite center.
Say that $P$ is a parabolic subgroup. Up to conjugation, it corresponds to a choice $\Sigma\subset \Delta$ of simple roots and to a grading (see~\cite[p. 309]{Cap})
\begin{equation}
\mathfrak g = \mathfrak \mathfrak g_{-k,\Sigma}\oplus \dots \oplus \mathfrak g_{-1,\Sigma}\oplus \mathfrak g_{0,\Sigma}\oplus \mathfrak g_{1,\Sigma}\oplus\dots\oplus \mathfrak g_{k,\Sigma}.
\end{equation}
The parabolic subgroup $P$ corresponds to the Lie algebra of the null positive grades:
\begin{equation}
\mathfrak p = \mathfrak g_{0,\Sigma}\oplus \mathfrak g_{1,\Sigma}\oplus\dots\oplus \mathfrak g_{k,\Sigma}
\end{equation}
and an opposite parabolic subgroup $Q$ is given by the null and negative grades:
\begin{equation}
\mathfrak q = \mathfrak g_{0,\Sigma}\oplus \mathfrak g_{-1,\Sigma}\oplus\dots\oplus \mathfrak g_{-k,\Sigma}
\end{equation}

The positive and negative grades give two isomorphic nilpotent subalgebras 
\begin{align}
\mathfrak n_{-\Sigma} &= \mathfrak g_{-1,\Sigma}\oplus\dots\oplus \mathfrak g_{-k,\Sigma},\\
\mathfrak n_{\Sigma} &= \mathfrak g_{1,\Sigma}\oplus\dots\oplus \mathfrak g_{k,\Sigma}.
\end{align}
The subalgebra
\begin{equation}
\mathfrak l = \mathfrak g_{0,\Sigma} = \mathfrak p \cap  \mathfrak q
\end{equation}
is the Levi factor and corresponds to the subgroup $L = P\cap Q$.

One can take the (opposite) nilpotent subgroups $N_+\subset P$ and $N_-\subset Q$ corresponding to the subalgebras $\mathfrak n_\Sigma$ and $\mathfrak n_{-\Sigma}$ respectively. There is a natural embedding $N_+\to G/Q$ (see~\cite[p. 20-21]{Guivarch}) and the image is known as an \emph{open Schubert stratum}~\cite[p. 175]{Kapovich}.  This open subset is dense inside $G/Q$.

So one can consider the subgeometry corresponding to $N_+\subset G/Q$. The isotropy is $L$. The subgroup $L$ acts on $N_+$ by conjugation since $Q = LN_-$ and $N_+$ acts on itself by left-translation.

This can be summarized by the following proposition.
\begin{proposition}
Let $G$ be a non-compact semi-simple Lie group with finite center and let $P\subset G$ be a parabolic subgroup.  Let $Q\subset P$ be an opposite parabolic subgroup. Decompose $P = LN_+$ and $Q = LN_-$. Then $(P,N_+)$ is a subgeometry of $(G,G/Q)\cong (G,G/P)$ and $N_+$ is an open and dense subset of $G/Q$.
\end{proposition}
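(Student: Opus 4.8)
The plan is to establish each of the three claimed properties of the pair $(P, N_+)$ in turn, drawing on the structure theory recalled in the preceding paragraphs. First I would recall the grading $\mathfrak g = \mathfrak g_{-k,\Sigma}\oplus\dots\oplus\mathfrak g_{k,\Sigma}$ and the fact that $\mathfrak p = \mathfrak g_{0,\Sigma}\oplus\dots\oplus\mathfrak g_{k,\Sigma}$, $\mathfrak q = \mathfrak g_{0,\Sigma}\oplus\mathfrak g_{-1,\Sigma}\oplus\dots\oplus\mathfrak g_{-k,\Sigma}$, so that at the Lie algebra level $\mathfrak p = \mathfrak l \oplus \mathfrak n_\Sigma$ and $\mathfrak q = \mathfrak l \oplus \mathfrak n_{-\Sigma}$, where $\mathfrak l = \mathfrak g_{0,\Sigma} = \mathfrak p\cap\mathfrak q$. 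Exponentiating, $P = LN_+$ and $Q = LN_-$ as claimed; here $N_\pm = \exp(\mathfrak n_{\pm\Sigma})$ are the unipotent radicals. These are standard facts about parabolic subgroups and their opposites (see Čap--Slovák, cited in the excerpt), so I would state them with a reference rather than re-derive them.

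Next I would address the density and openness of $N_+$ inside $G/Q$. The natural map $N_+ \to G/Q$, $n\mapsto nQ$, is the restriction to $N_+$ of the orbit map; its differential at the identity is the projection $\mathfrak g \to \mathfrak g/\mathfrak q$ restricted to $\mathfrak n_\Sigma$, which is an isomorphism because $\mathfrak g = \mathfrak n_\Sigma \oplus \mathfrak q$ as vector spaces. Hence the map is an open immersion onto its image near the identity, and since it is $N_+$-equivariant it is an open immersion everywhere; injectivity follows from $N_+ \cap Q = \{e\}$ (a consequence of $\mathfrak n_\Sigma \cap \mathfrak q = 0$ and the unipotence/exponential diffeomorphism). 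For density, I would invoke the Bruhat decomposition: $G/Q$ is the disjoint union of $N_+$-orbits indexed by the relevant Weyl coset space, among which the "big cell" $N_+ \cdot eQ$ is the unique open (hence dense) one, all other cells having strictly smaller dimension. This is exactly the open Schubert stratum picture referenced via Kapovich and Guivarc'h in the excerpt, so I would cite those.

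Finally I would verify that $(P, N_+)$ is genuinely a subgeometry of $(G, G/Q)$, i.e. that the $P$-action on $G/Q$ preserves the open cell $N_+$ and acts there through nil-affine transformations. Write $P = L N_+$. The subgroup $N_+$ acts on the cell by left translation $n\cdot n'Q = (nn')Q$, which is the left-translation action of $N_+$ on itself under the identification $N_+ \cong N_+\cdot eQ$. The Levi $L$ fixes the base point $eQ$ (since $L \subset Q$) and normalizes $N_+$ (since $\mathfrak l$ normalizes $\mathfrak n_\Sigma$ by the grading relation $[\mathfrak g_{0,\Sigma},\mathfrak g_{i,\Sigma}]\subset\mathfrak g_{i,\Sigma}$), so $\ell \cdot nQ = (\ell n \ell^{-1})Q$, i.e. $L$ acts by conjugation, which is by automorphisms of the nilpotent group $N_+$. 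Combining, every element of $P$ acts on the cell as an element of $N_+ \rtimes \Aut(N_+) \subset \Aff(N_+)$, so the $(G,G/Q)$-structure restricts to a $(P,N_+)$-structure. Via the standard isomorphism $G/Q \cong G/P$ (both are the same flag manifold up to the identification sending opposite parabolics to each other) this also exhibits $(P,N_+)$ as a subgeometry of $(G,G/P)$.

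The only genuinely delicate point is the density statement, where one must be careful that the cell $N_+\cdot eQ$ is the top-dimensional Bruhat cell and that all other strata have strictly smaller dimension; everything else is a matter of bookkeeping with the grading and the decomposition $\mathfrak g = \mathfrak n_\Sigma \oplus \mathfrak q$. I would therefore lean on the cited references (Guivarc'h and Kapovich) for the density and present the rest as direct consequences of the grading.
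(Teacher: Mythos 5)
Your argument is correct and follows essentially the same route as the paper, which justifies the proposition by the preceding discussion: the embedding $N_+\to G/Q$ onto the open dense Schubert stratum (delegated to Guivarc'h and Kapovich, as you also do), together with the observation that $N_+$ acts on the cell by left translation while $L\subset Q$ fixes the base point and acts by conjugation, hence through $N_+\rtimes\Aut(N_+)\subset\Aff(N_+)$. Your extra details (the decomposition $\mathfrak g=\mathfrak n_\Sigma\oplus\mathfrak q$ giving an open immersion, and the Bruhat big-cell argument for density) simply fill in steps the paper leaves to the references.
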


\begin{definition}
We call $(P,N_+)$ a \emph{Levi subgeometry}.
\end{definition}

In general, a Levi subgeometry $(P,N_+)$ is not a ray nil-affine geometry, except in the case where $P=B$ is the Borel subgroup. (Note that $P=B$ corresponds to $\Sigma=\emptyset$.) With Langland's decomposition
\begin{equation}
B = M_BA_BN_+, \; \mathfrak b = \mathfrak m_B\oplus\mathfrak a_B\oplus \mathfrak n_B
\end{equation}
it is clear that $M_BA_B$ verifies the conditions to be a ray nil-affine isotropy group. (Note that we always have $\mathfrak n_\Sigma\subset\mathfrak n_B$.) In general, $\mathfrak p$ is decomposed as (see~\cite{Guivarch, Knapp}):
\begin{equation}
P = M(\Sigma)A(\Sigma)N_+, \; \mathfrak p = \mathfrak m_\Sigma\oplus\mathfrak a_\Sigma\oplus \mathfrak n_\Sigma
\end{equation}
and $M(\Sigma)A(\Sigma)$ always contains $M_BA_B$. The subgroup $A(\Sigma)$ still acts by diagonal transformations but $M(\Sigma)$ (that centralizes $A(\Sigma)$ in $L$) is no longer compact. For instance, $\mathfrak m_\Sigma\cap \mathfrak a_B$ has positive dimension if $P\neq B$. More specifically:
\begin{equation}
\mathfrak a_\Sigma = \{H\in\mathfrak a\; | \forall \phi\in \Sigma,\;\phi(H)=0\}.
\end{equation}

We can denote $K\subset G$ a maximal compact such that $M_B\subset K$. If we consider $K\cap M_\Sigma$ then it still centralizes $A_\Sigma$ and becomes compact.

The construction we  apply gives two different ray nil-affine geometries. It is summarized by the following.
\begin{proposition}\label{prop-pararay}
Let $(P,N_+)$ be a Levi subgeometry. We have two ray nil-affine geometries (that are different if $P\neq B$):
\begin{align}
\left(N_+\rtimes M_BA_B,  N_+\right) ,\\
\left(N_+ \rtimes (K\cap M_\Sigma)A_\Sigma, N_+\right).
\end{align}
The rank of the first ray geometry is the real rank of the underlying symmetric space. The rank of the second is the rank of the first minus the cardinal of $\Sigma$.
\end{proposition}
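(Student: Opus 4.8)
The plan is to verify, for each of the two pairs, the two conditions of Definition~\ref{def-ray}; all of the genuine content is the structure theory of parabolic subgroups (the restricted root space decomposition and the Langlands-type decomposition $P = M_\Sigma A_\Sigma N_+$ recalled above, see~\cite{Guivarch,Knapp}), and the remainder is bookkeeping with two coordinate systems, on $\mathfrak a$ and on $\mathfrak a_\Sigma$.

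First I would isolate the ingredients common to both geometries. Fix a maximal $\R$-split subalgebra $\mathfrak a$ and the restricted root space decomposition $\mathfrak g = \mathfrak g_0 \oplus \bigoplus_\alpha \mathfrak g_\alpha$; then the Lie algebra of $N_+$ is $\mathfrak n_\Sigma = \bigoplus \mathfrak g_\alpha$ over the positive roots of positive $\Sigma$-height, it is the nilradical of $\mathfrak p$, and the $\Sigma$-grading $\mathfrak n_j := \mathfrak g_{j,\Sigma}$ is a grading of $\mathfrak n_\Sigma$ with $[\mathfrak n_i,\mathfrak n_j]\subset \mathfrak n_{i+j}$. Choosing a basis $(e_1,\dots,e_n)$ of $\mathfrak n_\Sigma$ each of whose vectors lies in a single restricted root space $\mathfrak g_\alpha$, every $e_i$ lies in a single $\mathfrak n_j$, so the adapted-basis part of Definition~\ref{def-ray} holds for both geometries at once. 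Since $M_BA_B\subset P$ and $(K\cap M_\Sigma)A_\Sigma \subset L = M_\Sigma A_\Sigma \subset P$, and $P$ normalizes $\mathfrak n_\Sigma$, both isotropy groups act on $\mathfrak n_\Sigma$ through $\Ad$, hence land in $\GL_{\Aut}(\mathfrak n_\Sigma)$; moreover $\Ad_{\exp(a)}$ acts on $\mathfrak g_\alpha$ by the scalar $\exp(\alpha(a))$ for $a\in\mathfrak a$.

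Then for the first geometry I would note that $A_B = \exp(\mathfrak a)$ is $\R$-split of dimension $r = \dim\mathfrak a = $ the real rank of the underlying symmetric space, so $\exp(a)\mapsto(\exp\phi_1(a),\dots,\exp\phi_r(a))$ — with $\{\phi_1,\dots,\phi_r\}$ the simple restricted roots — identifies $A_B$ with $(\R_+^*)^r$, while $M_B\subset K$ is compact and centralizes $A_B$. Taking $(\alpha_1,\dots,\alpha_r) := (\phi_1,\dots,\phi_r)$, a linear isomorphism $\mathfrak a\to\R^r$: if $e_i\in\mathfrak g_\alpha$ with $\alpha = \sum_l c_l\phi_l$ (the $c_l$ are non-negative integers, though Definition~\ref{def-ray} only asks for real exponents), then $\Ad_{\exp(a)}e_i = \exp(\alpha(a))e_i = \prod_l\exp(\alpha_l(a))^{c_l}e_i$, which is exactly the diagonal entry of~\eqref{eq-dilatationmatrix} with $d_{l,i}=c_l$; hence the first pair is a ray geometry of rank $r$. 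For the second geometry, $\mathfrak a_\Sigma = \{H\in\mathfrak a:\phi(H)=0\ \forall\phi\in\Sigma\}$ has dimension $r-|\Sigma|$ by linear independence of the simple roots, $A_\Sigma = \exp(\mathfrak a_\Sigma)\cong(\R_+^*)^{r-|\Sigma|}$, and $K\cap M_\Sigma$ is a closed subgroup of the compact $K$, hence compact, centralizing $A_\Sigma$ because $M_\Sigma$ does. The restrictions to $\mathfrak a_\Sigma$ of the $\phi_l\notin\Sigma$ form a basis of $\mathfrak a_\Sigma^*=\mathfrak a^*/\mathrm{span}(\Sigma)$, giving an isomorphism $(\alpha_1,\dots,\alpha_{r-|\Sigma|})\colon\mathfrak a_\Sigma\to\R^{r-|\Sigma|}$; for $e_i\in\mathfrak g_\alpha$ with $\alpha=\sum_l c_l\phi_l$ and $a\in\mathfrak a_\Sigma$ one has $\alpha(a)=\sum_{l:\phi_l\notin\Sigma}c_l\phi_l(a)$, so $\Ad_{\exp(a)}$ is again the diagonal matrix of~\eqref{eq-dilatationmatrix}, now with exponents the coefficients of the non-$\Sigma$ simple roots in $\alpha$, and this is a ray geometry of rank $r-|\Sigma|$. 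Finally, $P\neq B$ means $\Sigma\neq\emptyset$, so the ranks $r$ and $r-|\Sigma|$ differ and the two geometries are distinct; whereas $P=B$ forces $\Sigma=\emptyset$, $M_\Sigma=M_B$, $A_\Sigma=A_B$, $K\cap M_B=M_B$, and they coincide.

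I do not expect a deep obstacle here: once the parabolic structure theory is granted this is a verification. The only points requiring care are the bookkeeping with the two coordinate systems — in particular the claim that the restrictions of the non-$\Sigma$ simple roots form a basis of $\mathfrak a_\Sigma^*$, which rests on the linear-algebra fact $(\bigcap_{\phi\in\Sigma}\ker\phi)^\perp = \mathrm{span}(\Sigma)$ — and the observation that, although $M_\Sigma$ is non-compact in general, Definition~\ref{def-ray} only uses its compact part $K\cap M_\Sigma$, so replacing $M_\Sigma$ by this part loses nothing.
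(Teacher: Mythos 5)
Your argument is correct and essentially the paper's own: the proposition is stated as a summary of the preceding Langlands-decomposition discussion, and your verification of Definition~\ref{def-ray} (root-space-adapted basis, diagonal action of $\exp(a)$ with exponents the simple-root coefficients of each restricted root, compactness of $K\cap M_\Sigma$ centralizing $A_\Sigma$, and the dimension count $\dim\mathfrak a_\Sigma=r-|\Sigma|$) fills in exactly the bookkeeping the paper leaves implicit. The only slip is notational: in the paper's (Knapp--Guivarc'h) convention $\Sigma$ indexes the simple roots whose root spaces lie in the Levi, so $\mathfrak n_\Sigma$ is spanned by the positive roots having a nonzero coefficient on $\Delta\setminus\Sigma$ rather than the roots of ``positive $\Sigma$-height''; since your computations only use that $\phi(a)=0$ for $\phi\in\Sigma$ and $a\in\mathfrak a_\Sigma$, together with the standard Langlands facts, this mislabeling does not affect the proof.
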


\section*{Example: real non-compact forms of $\SL(3,\C)$ and $\SL(4,\C)$}

As an example, we give a table of the relevant characteristics of the parabolic geometries of the non-compact real forms of $\SL(3,\C)$ and $\SL(4,\C)$. It shows that the  Levi subgeometries of dimension less than six have a nilpotency order of two, and therefore our results can be applied. Note however that in higher nilpotency order we still have a similar classification result on the Carnot group~\cite{Ale}.

\begin{table}[htp]
\begin{center}
\begin{tabular}{|c|c|c|c|c|}
\hline
$G$ and real rank& $\Sigma\subset \Delta$ &  $\dim(\mathfrak n_\Sigma)$ & Nil-order \\
\hline
\hline
$\SL(3,\R)$, rank $2$ & $\emptyset$    & $3$ &  $2$ \\
& $\{\phi_i\}$  & $2$ & $1$
\\ \hline 
$\SU(2,1)$, rank $1$ &$\emptyset$    & $3$ & $2$  
\\ \hline
\hline
$\SL(4,\R)$, rank $3$ & $\emptyset$   & $6$ & $3$ \\
& $\{\phi_i\}$ & $5$ & $2$ \\
& $\{\phi_i,\phi_{i+1}\}$ & $4$ & $2$ 
\\ \hline
$\SU(3,1)$, rank $1$ & $\emptyset$  & $5$ & $2$
\\ \hline
$\SU^*(4)$, rank $1$ & $\emptyset$  & $4$ & $1$
\\ \hline
$\SU(2,2)$, rank $2$ & $\emptyset$  & $6$ & $3$ \\
 & $\{\phi_2\}$  & $5$ & $2$ \\
 & $\{\phi_1\}$  & $4$ & $1$
\\ \hline
\end{tabular}
\end{center}
\caption{The parabolic geometries of the real forms of $\SL_3(\C)$ and $\SL_4(\C)$.}
\end{table}%

\subsection*{The case of $\SU(2,2)$}
We workout a full example: the group $\SU(2,2)$.
We choose the  Hermitian form with signature $(2,2)$ determined by
\begin{equation}
J=\begin{pmatrix}0 & 0 & 0 & 1 \\ 0 & 0 & 1 & 0 \\ 0 & 1 & 0  & 0 \\ 1 & 0 & 0 & 0\end{pmatrix}.
\end{equation}
We let, $x,y,z,w\in \C$ and $\alpha_1,\alpha_2,\beta,\gamma,\Delta,s,t\in \R$. The Lie algebra of $\SU(2,2)=\SU(J)$ is described as follows.

\begin{equation}
\mathfrak{su}(2,2) = \begin{pmatrix}
\alpha_1 - \ii \beta & -\overline x & -\overline y & \ii t \\
-\overline z & \alpha_2 + \ii \beta & \ii \gamma & y \\
-\overline w & \ii \delta & -\alpha_2 + \ii \beta & x \\
\ii s & w & z & -\alpha_1 - \ii\beta
\end{pmatrix}
\end{equation}

By choosing the roots
\begin{equation}
\phi_1 = \alpha_1-\alpha_2 , \; \phi_2 = 2\alpha_2
\end{equation}
we can describe the structure by replacing variables with their corresponding root space.

\begin{equation}
\begin{pmatrix}
\mathfrak h & \phi_1 & \phi_1 + \phi_2 & 2\phi_1 + \phi_2 \\
-\phi_1 & \mathfrak h & \phi_2 & \phi_1+\phi_2 \\
-(\phi_1+\phi_2) & -\phi_2 & \mathfrak h & \phi_1 \\
-(2\phi_1+\phi_2) & -(\phi_1+\phi_2) & -\phi_1 & \mathfrak h
\end{pmatrix}
\end{equation}

So, for example, the coordinate $\ii t$ in the first matrix corresponds to the root space associated to the root $2\phi_1+\phi_2$ as the second matrix indicates. 

\vskip10pt
Now, they are several choices of parabolic geometries.
Each gives a different dimension for $\mathfrak n_\Sigma$.

\paragraph{The Borel parabolic subgroup}The case $\Sigma=\emptyset$ corresponds to the choice of the Borel subgroup.
\begin{equation}
\mathfrak b = 
\underbrace{
\begin{pmatrix}
 - \ii \beta & && \\
 &  + \ii \beta && \\
 &  &  + \ii \beta &  \\
&  &  &  - \ii\beta
\end{pmatrix}
}_{\mathfrak m_B} 
\oplus
\underbrace{
\begin{pmatrix}
\alpha_1 & && \\
 & \alpha_2  && \\
 &  & -\alpha_2  &  \\
&  &  & -\alpha_1 
\end{pmatrix}
}_{\mathfrak a_B}
\oplus
\underbrace{
\begin{pmatrix}
0 & -\overline x & -\overline y & \ii t \\
 &0 & \ii \gamma & y \\
 &  & 0 & x \\
&  &  &0
\end{pmatrix}
}_{\mathfrak n_B}
\end{equation}
In the coordinates $(x,\gamma,y,t)$ of $\mathfrak n_B$ we can express the adjoint action of $\mathfrak h_B=\mathfrak m_B\oplus\mathfrak a_B$ by:
\begin{equation}\label{eq-hbn}
\mathfrak h_B|_{\mathfrak n_B}:
\begin{pmatrix}
(\alpha_1-\alpha_2) +2\ii\beta \\
& 2  \alpha_2 \\ 
&& (\alpha_1+\alpha_2) +2\ii\beta \\
&&&2\alpha_1
\end{pmatrix}
\end{equation}
and it shows of course that it is indeed a ray geometry.

\paragraph{The $5$-dimensional parabolic geometry}
We obtain the $5$-dimension parabolic geometry by setting $\Sigma=\{\phi_2\}$.
It corresponds to a CR geometry (that is in fact contact).
This parabolic geometry is also the boundary geometry related to an  homogeneous space in $\CP^3$. Indeed, consider the equation
\begin{equation}
\Re(x_1\overline{x_4}) + \Re(x_2\overline{x_3}) < 0.
\end{equation}
It describes an homogeneous space with semi-simple group $\SU(2,2)$. The boundary
\begin{equation}
\Re(x_1\overline{x_4}) + \Re(x_2\overline{x_3}) = 0
\end{equation}
is a real submanifold of $\CP^3$ having dimension $5$. The parabolic subgroup that we choose corresponds to the stabilizer of $(1:0:0:0)$. (Compare this geometry with~\cite{Kassel}.)

The Lie algebra is given by:
\begin{equation}
\mathfrak{p}_{\phi_2} = 
\underbrace{
\begin{pmatrix}
\alpha_1-\ii\beta \\ 
& \alpha_2+\ii\beta & \ii\gamma \\
&\ii\delta & -\alpha_2+\ii\beta \\
&&&-\alpha_1-\ii\beta
\end{pmatrix}
}_{\mathfrak l_{\phi_2}}
\oplus
\underbrace{
\begin{pmatrix}
 0& -\overline x & -\overline y & \ii t \\
 &0&0&y\\
 &&0& x \\
 &&&0
\end{pmatrix}
}_{\mathfrak n_{\phi_2}}
\end{equation}

Now, to obtain ray nil-affine geometries, we need to reduce the Levi factor $\mathfrak l_{\phi_2}$ into appropriated combinations of $\mathfrak m \oplus\mathfrak a$.

The first choice consists in taking the same diagonal part as the Borel subgroup, namely $\mathfrak h_B\subset\mathfrak l_{\phi_2}$.
The adjoint action of $\mathfrak h_B$ is described by equation~\eqref{eq-hbn}.

The second choice consists in setting $\phi_2=2\alpha_2=0$ and in
choosing a compact factor in the middle square. We obtain the following Lie algebra.
\begin{equation}
\mathfrak h_{\phi_2} =
\underbrace{
\begin{pmatrix}
-\ii\beta \\
&\ii\beta & \ii\gamma\\
&\ii\gamma & \ii\beta \\
&&&-\ii\beta
\end{pmatrix}
}_{\mathfrak m_{\phi_2}}
\oplus
\underbrace{
\begin{pmatrix}
\alpha_1 \\
&0\\
&&0\\
&&&-\alpha_1
\end{pmatrix}
}_{\mathfrak a_{\phi_2}}\\
\end{equation}
And the adjoint action, with $(x,y,t)$ as basis of $\mathfrak n_{\phi_2}$, is described by:
\begin{equation}
\mathfrak h_{\phi_2}|_{\mathfrak n_{\phi_2}} : 
\begin{pmatrix}
\alpha_1 + 2\ii\beta &  \ii \gamma \\
\ii\gamma & \alpha_1+2\ii\beta \\
&&2\alpha_1
\end{pmatrix}
\end{equation}

\paragraph{The $4$-dimensional parabolic geometry}
The last parabolic geometry is given by $\Sigma=\{\phi_1\}$. We obtain the following Lie algebra.
\begin{equation}
\mathfrak p_{\phi_1} = 
\underbrace{
\begin{pmatrix}
\alpha_1-\ii\beta & -\overline x\\ 
-\overline z& \alpha_2+\ii\beta & \\
& & -\alpha_2+\ii\beta &x \\
&&z&-\alpha_1-\ii\beta
\end{pmatrix}
}_{\mathfrak l_{\phi_1}}\oplus
\underbrace{
\begin{pmatrix}
0 & 0 & -\overline y & \ii t \\
 &0 & \ii \gamma & y \\
 &  & 0 & 0 \\
&  &  &0
\end{pmatrix}
}_{\mathfrak n_{\phi_1}}
\end{equation}

The first ray geometry is again obtained by taking the Borel Levi factor. The adjoint action of $\mathfrak h_B\subset\mathfrak l_{\phi_1}$ on $\mathfrak n_{\phi_1}$ is described by equation \eqref{eq-hbn}.

The second choice consists in reducing $\mathfrak a$ to the subspace $\mathfrak a_\Sigma$ where $\phi_1=0$, that is to say, with $\alpha_1=\alpha_2$ since $\phi_1=\alpha_1-\alpha_2$. 
It gives the following algebra
\begin{equation}
\mathfrak h_{\phi_1} = 
\underbrace{
\begin{pmatrix}
-\ii \beta & -\overline x \\
x & \ii\beta \\
&& \ii\beta  & x\\
&&-\overline x&-\ii\beta
\end{pmatrix}}_{\mathfrak m_{\phi_1}}
\oplus
\underbrace{
\begin{pmatrix}
\alpha_1 \\
&\alpha_1 \\
&&-\alpha_1 \\
&&&-\alpha_1
\end{pmatrix}}_{\mathfrak a_{\phi_1}}.
\end{equation}
The adjoint action of $\mathfrak h_{\phi_1}$ is more difficult to express as a single matrix. On $\mathfrak h_{\phi_1}\cap \mathfrak h_B$, we already know the result by equation~\eqref{eq-hbn}. So it lasts the $\mathfrak{su}(2)$ part: 
\begin{equation}
\left[
\begin{pmatrix} 
0 & -\overline{x} \\
x & 0 \\
&&0&x \\
&&-\overline x & 0
\end{pmatrix},
\begin{pmatrix}
0 & 0 & -\overline y & \ii t \\
 &0 & \ii \gamma & y \\
 &  & 0 & 0 \\
&  &  &0
\end{pmatrix}
\right] = 
\begin{pmatrix}
0 & 0 & \ii(t - \gamma)\overline x & 2\ii{\rm Im}(x\overline y)  \\
 &0 & 2 \ii{\rm Im}(y\overline{x})& \ii(t - \gamma)x\\
 &  & 0 & 0 \\
&  &  &0
\end{pmatrix}.
\end{equation}

\section{Ray closed manifolds}\label{sec-4}

In this section we prove the following.
\begin{theorem}\label{thm-partialcomp}
\thmpartialcomp
\end{theorem}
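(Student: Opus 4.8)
The plan is to reduce the statement to the affine case already treated in \cite{Ale3}, using the nilpotency-order-two hypothesis to transfer convexity arguments from $\mathfrak n$ to $N$. The first observation is that by the lemma above, when $\cN$ is commutative or two-step nilpotent, a subset of $N$ is geodesically convex if and only if it is affinely convex in the vector space $\mathfrak n$ under the exponential identification. Consequently the convexity machinery of Carrière recalled in the proposition of section~\ref{sec-2} — nested convex sets on which $D$ is injective, the visibility sets $V_p$, and the dichotomy $\exp_p(V_p)=\widetilde M$ versus $D$ a diffeomorphism — applies verbatim. So we may assume $M$ is \emph{not} complete, i.e.\ there is some $p_0\in\widetilde M$ with $V_{p_0}\subsetneq \rT_{p_0}\widetilde M$, and we must produce the nil-affine subspace $I$.

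Next I would run the rank-one argument of \cite{Ale3}. Fix $p_0$ with $\Omega := \exp_{p_0}(V_{p_0})$ a proper convex open subset on which $D$ is injective; identify $\widetilde M$ with $\Omega\subset\cN$ via $D$ when convenient. The key is that the rank-one isotropy acts on $\mathfrak n$ by a one-parameter group of dilations $\exp(a)=\operatorname{diag}(e^{d_1 t},\dots,e^{d_n t})$ (times a bounded compact factor $K$), and one analyses the boundary $\partial\Omega$ by looking at limits $\rho(g_i)\cdot S$ of convex pieces $S\subset\widetilde M$ under holonomy: compactness of $M$ forces such limits to exist after extracting, and proposition~\ref{prop-visconv} guarantees the limit $B_\infty=\lim\rho(g_i)D(S)$ is again convex and covered by a convex $S_\infty\subset\widetilde M$. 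Pushing $S_\infty$ to the "boundary at infinity" of $\Omega$ and using that the dilation directions $d_i$ all share the same sign or include zero directions, one extracts an affine (hence nil-affine, by the lemma) subspace $I$ fixed in the limit, and shows $D(\widetilde M)\subset\cN-I$ with $D$ a covering onto its image. This is essentially the content of the affine theorem of \cite{Ale3}; the role of section~\ref{sec-4} is to supply the extra nilpotency bookkeeping.

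The main obstacle — and the only place the two-step hypothesis is genuinely needed — is that the group law $+$ on $N$ is noncommutative, so "limit set" and "convex hull" computations that were purely linear in \cite{Ale3} now involve the Baker–Campbell–Hausdorff correction terms, and one must check that left translations and the linear automorphisms interact correctly with the decomposition $N=\exp(L_1)\exp(L_2)$ furnished by Lemma~\ref{lem-decomp}. Concretely, the nil-affine subspace $I$ is produced as $L_x\exp(V)$ for a dilation-invariant $V\subset\mathfrak n$; one needs that $V$ is spanned by a subfamily of the eigenbasis $(e_1,\dots,e_n)$ — which is automatic since the limiting dilations are diagonal in that basis — so that Lemma~\ref{lem-decomp} applies and $\cN-I$ is genuinely the complement of a closed nil-affine subspace with the expected product structure. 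I would therefore organise section~\ref{sec-4} as: (i) recall the affine statement of \cite{Ale3} and the convexity proposition; (ii) verify that all limiting objects (boundary pieces, candidate $I$) are coordinate subspaces in the graded eigenbasis, hence behave like affine subspaces under the nilpotent group law via the two-step BCH formula and Lemma~\ref{lem-decomp}; (iii) conclude that non-completeness yields $D\colon\widetilde M\to\cN-I$ a covering onto its image.
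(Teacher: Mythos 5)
There is a genuine gap, in two places. First, your setup quietly assumes what cannot be assumed: after supposing $M$ incomplete you ``fix $p_0$ with $\Omega=\exp_{p_0}(V_{p_0})$ a proper convex open subset'' and ``identify $\widetilde M$ with $\Omega\subset\cN$ via $D$''. The convexity proposition only says $D$ is injective \emph{on} $\exp_{p_0}(V_{p_0})$, and that this set equals $\widetilde M$ only \emph{if} it happens to be convex; nothing gives you a global identification of $\widetilde M$ with a convex subset of $\cN$. Indeed the conclusion of theorem~\ref{thm-partialcomp} is that $D$ is a covering of $\cN-I$, typically with infinite deck group, so $D$ is not injective in general and assuming it is begs the question. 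Second, your description of the rank-one mechanism is wrong: you invoke ``that the dilation directions $d_i$ all share the same sign or include zero directions''. That is only the similarity case; in a general rank-one ray geometry the weights have mixed signs (e.g.\ the volume-preserving Heisenberg geometry with weights $1,-1,0$), and the rank-one hypothesis is used quite differently, namely to show that the splitting $E\oplus P\oplus F$ coming from a Fried dynamics is independent of the dynamics up to exchanging $E$ and $F$. The expanded part $F\neq\{0\}$ is precisely the hard case: it is why $I$ can be a positive-dimensional nil-affine subspace rather than a point, and it drives the whole construction (fixed points $q_{ji}$ of the corrected maps $Q_{ji}$, the visible half-spaces $H_i=E_{+,i}\times(P\oplus F)$ of proposition~\ref{prop-eplus}, and the final path-lifting argument through maximal visible balls that actually yields the covering statement). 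None of this appears in your sketch, which only asserts that ``one extracts'' $I$ and that $D$ covers $\cN-I$.

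Relatedly, the two-step hypothesis is not mere ``bookkeeping'' around lemma~\ref{lem-decomp}: in the paper it enters the analytic heart of the argument. It is what makes Euclidean balls geodesically convex (lemma~\ref{lem-boule}), what makes affine and nil-affine subsets coincide in the visibility argument for the half-spaces, and above all what allows the approximation of $T_{ji}$ by the $F$-translated maps $Q_{ji}$: lemma~\ref{lem-convQ} ($c_{ji,L}\to 0$) and lemma~\ref{lem-approxneg} (control of $b_{kj,E}$) both rest on explicit two-step bracket computations, such as bounding $[f_{ki}^{-1}(c_{kj,F}),b_{ji,E}]_E$ via the identity $c_{mn,F}=q_{mn}-f_{mn}(q_{mn})$. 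Your proposal defers all of this to \cite{Ale3}, but these are exactly the steps that do not transfer verbatim from the commutative case and that section~\ref{sec-4} has to redo; also note that the invisible set $I$ is not obtained as a coordinate subspace spanned by a subfamily of the eigenbasis (its $E$-part is only analyzed later, in section~\ref{sec-5}, where one shows $I=P\oplus F\oplus\bigoplus (E_i\cap I)$ is a subalgebra). As written, the proposal is an outline of the affine strategy rather than a proof of the nilpotent statement.
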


And in fact, the proof is very similar to the proof in~\cite{Ale3}. We will not repeat the proofs  in~\cite{Ale3} that can directly be  used in our setting. We will recall the construction and the notations.

\subsection{Fried dynamics}

Let $M$ be an incomplete closed nil-affine manifold. Choose any metric on $M$ compatible with its topology.
Denote
 $\pi\colon \widetilde M \to M$ its universal cover.
If $x\in\widetilde M$ and $\gamma\subset \widetilde M$ is a geodesic issued from $x$, incomplete at $t=1$, then the projection of $\gamma$ in $M$ is an open curve without any continuous completion at $t=1$.
Since $M$ is closed, the projection $\pi(\gamma)$ has a recurrent point $y\in M$.

Let $U\subset M$ be a compact neighborhood of $y$, convex and trivializing the universal cover.
For the choice of a decreasing sequence $\epsilon_i \to 0$ we can define $t_i$ the time such that $\pi(\gamma(t_i))$ belongs to $U$ and is at distance at most $\epsilon_i$  to $y\in M$. We ask that $\pi(\gamma)$ exits $U$ between the times $t_i$ and $t_{i+1}$. We have that $t_i\to 1$ since the geodesic is incomplete at $t=1$.

We use the trivialization of the universal cover by $U$. It provides $U_i\subset \widetilde M$ such that $\gamma(t_i)\in U_i$. Let $y_i\in U_i\subset \widetilde M$ be the lifts of $y\in U\subset M$.
Every $U_i$ is again a compact convex neighborhood of $y_i$.
Through the developing map $D\colon \widetilde M \to \cN$, each $D(U_i)$ is compact, convex and they accumulate along the compactification $\overline{D(\gamma)}$ of $D(\gamma)$ at  $t=1$.
Since the intersections of $\gamma$ with the $U_i$'s are transverse and disjoint along $\gamma$, the $D(U_i)$'s intersect transversally and disjointly $D(\gamma)$.

\begin{definition}
Let $\gamma\subset \widetilde M$ be an incomplete geodesic at $t=1$. Let $y\in M$ be a recurrent point of its projection into $M$. Let $U$ be a convex, compact, trivializing neighborhood of $y$. Let $\epsilon_i\to 0$ be decreasing and let
$t_i\to 1$ be an associated sequence of times such that  (for the lifts $y_i\in U_i$ of $y\in U$) we have $\gamma(t_i)\in U_i$ and the distance between $\pi(\gamma(t_i))$ and $y$ is lesser than $\epsilon_i$.
Define $g_{ji}\in \pi_1(M,y)$ the transformations of $\widetilde M$ verifiying
\begin{equation}
g_{ji}(U_i) = U_j.
\end{equation}
Those data define a \emph{Fried dynamics}.
\end{definition}

Note that a subsequence of the  times $\{t_i\}$ (or the distances $\{\epsilon_i\}$) corresponds univocally to a subsequence of the pairs  $\{y_i\in U_i\}$.

The transformations $g_{ji}$ verify a cocycle property: 
\begin{equation}
g_{ki} = g_{kj}g_{ji}.
\end{equation}
For the nil-affine geometry $(G,\cN)$ considered, denote by $T_{ji}$ the corresponding transformations by the holonomy morphism $T_{ji}=\rho(g_{ji})\in G$.

\vskip10pt
We  assume that $(G,\cN)$ is a ray geometry.
We have a basis $(e_1,\dots,e_n)$ of $\mathfrak n$ such that $A\subset KA\subset G$ acts diagonally. 
Once a base point of $\cN$ is chosen, we can express any $T_{ji}\in G$ by 
\begin{equation}
T_{ji}(x) = c_{ji} + f_{ji}(x),
\end{equation}
with $c_{ji}\in \cN$ and $f_{ji}\in KA$.
Recall that a change of the base point  is expressed by
\begin{equation}
 T_{ji}(x)=T_{ji}(y-y+x)  = (c_{ji} + f_{ji}(y)) + f_{ji}(-y+x).
\end{equation}
It preserves the linear part $f_{ji}$.
Decompose each $f_{ji}$ into 
\begin{equation}
f_{ji} = f_{ji,K}f_{ji,A}
\end{equation}
with $f_{ji,K}\in K$ et $f_{ji,A}\in A$. 
Recall that $K$ centralizes $A$, so both factors commute. The cocycle property on $T_{ji}$ implies that each factor also verifies the cocycle relation:
\begin{equation}
f_{ki,K} = f_{kj,K}f_{ji,K}, \; f_{ki,A} = f_{kj,A}f_{ji,A}
\end{equation}

\begin{lemma}
Up to  a subsequence of $\{y_i\in U_i\}$, we have
\begin{equation}
\lim_{j\to \infty}\lim_{i\to \infty} f_{ji,K} = \id.
\end{equation}
Let $e_q$ be a basis vector. Denote by $\beta_{ji,q}$ the diagonal element of $f_{ji,A}\in A$ for the direction $e_q$. Up to  a subsequence of $\{y_i\in U_i\}$, we have
\begin{equation}
\lim_{j\to \infty}\lim_{i\to \infty}  \beta_{ji,q} = \omega_q\in \{0,1,\infty\}.
\end{equation}
\end{lemma}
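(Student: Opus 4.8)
The plan is to analyze the cocycle $f_{ji} = f_{ji,K}f_{ji,A}$ built from the Fried dynamics, separating the compact part from the diagonal part. The key structural input is that the $U_i$ accumulate along the compactification of $D(\gamma)$ at $t=1$ and that the $y_i$ are recurrent lifts of a single point $y$, so the geometry of the picture is essentially ``stationary'' along a subsequence. I would first pass to a subsequence of $\{y_i \in U_i\}$ so that $f_{ji,K}$ converges as $i\to\infty$ for each fixed $j$, and then (diagonal argument) so that the iterated limit $\lim_{j\to\infty}\lim_{i\to\infty}f_{ji,K}$ exists; this is possible since $K$ is compact. The cocycle relation $f_{ki,K} = f_{kj,K}f_{ji,K}$ forces this iterated limit to be idempotent in $K$ (taking $i\to\infty$ then $k\to\infty$ in the identity $f_{ki,K} = f_{kj,K}f_{ji,K}$ with $j$ an intermediate index going to infinity), and the only idempotent in a group is the identity, giving the first claim.

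\textbf{The diagonal part.} For the second claim, fix a basis vector $e_q$ and write $\beta_{ji,q} > 0$ for the corresponding diagonal entry of $f_{ji,A}$; these satisfy the scalar cocycle $\beta_{ki,q} = \beta_{kj,q}\beta_{ji,q}$. Since $\beta_{ji,q}\in(0,\infty)$, after passing to a subsequence I can assume $\lim_{j\to\infty}\lim_{i\to\infty}\beta_{ji,q}$ exists in the compactification $[0,\infty]$. Call this limit $\omega_q$. The cocycle relation, in the limit, forces $\omega_q$ to satisfy $\omega_q = \omega_q \cdot \omega_q$ whenever the product is well-defined (i.e.\ not of the form $0\cdot\infty$), hence $\omega_q \in \{0,1,\infty\}$ in those cases. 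The point where I must be careful is precisely the indeterminate case $0\cdot\infty$: one has to rule out that the iterated limit lands in $(0,1)\cup(1,\infty)$ by a genuine argument rather than by formal manipulation of the cocycle. Here I would invoke the \emph{incompleteness} and the fact that $D(U_i)$ is a fixed-shape convex body (the image of the compact convex $U$) transported by $T_{ji}$ and accumulating against $\overline{D(\gamma)}$: if some $\beta_{ji,q}$ converged to a finite nonzero value along both indices, the corresponding transverse ``width'' of $D(U_j)$ in direction $e_q$ would neither collapse nor blow up, which combined with the other directions and with proposition~\ref{prop-visconv} (limit sets of convex sets are again developed convex sets) would contradict the incompleteness of $\gamma$ at $t=1$ — essentially because a non-degenerate limiting convex body sitting on the end of $D(\gamma)$ would let one extend the geodesic past $t=1$.

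\textbf{Main obstacle.} The routine part is the compactness/subsequence extraction and the idempotency argument for both $K$ and $A$; that is standard and mirrors~\cite{Ale3}. The genuine difficulty — and the step I expect to be the crux — is excluding intermediate values in $(0,1)\cup(1,\infty)$ for $\omega_q$. This is not forced by the cocycle alone; it requires feeding in the dynamical hypothesis (recurrence of $y$, incompleteness of $\gamma$) through the convexity machinery of section~\ref{sec-2}, quite possibly by a normalization/renormalization argument: rescale the $D(U_i)$ by the diagonal part to keep them of bounded non-degenerate size, extract a limit convex set via proposition~\ref{prop-visconv}, and observe that a surviving finite nonzero scale in some coordinate direction produces a convex neighborhood of the endpoint of $D(\gamma)$, contradicting that $\gamma$ has no continuous completion at $t=1$. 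The nilpotency of order at most two enters only to guarantee (via lemma~\ref{lem-decomp} and the graded basis) that these coordinatewise scalings are compatible with the group structure of $N$, so that the ``shape'' of $D(U_i)$ in the $e_q$-direction is a meaningful invariant to track.
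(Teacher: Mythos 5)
Your first half (compactness of $K$ plus the cocycle relation, with a diagonal extraction of subsequences) is exactly the intended argument, which the paper inherits from~\cite{Ale3}. The problem is the second half: the step you single out as ``the genuine difficulty'' --- excluding $\omega_q\in(0,1)\cup(1,\infty)$ --- is not a difficulty at all, and the dynamical route you sketch for it does not work. The exclusion \emph{is} forced by the cocycle alone. Concretely, the cocycle is a coboundary: $f_{ji}=f_{j1}f_{i1}^{-1}$, hence $f_{ji,K}=h_{j}h_{i}^{-1}$ with $h_i:=f_{i1,K}\in K$ and $\beta_{ji,q}=b_j/b_i$ with $b_i>0$ the corresponding diagonal entry of $f_{i1,A}$. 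Passing to a subsequence of $\{y_i\in U_i\}$ so that $h_i$ converges in the compact group $K$ and $b_i$ converges in $[0,\infty]$, the double limit of $f_{ji,K}$ is $h_\infty h_\infty^{-1}=\id$, and the double limit of $b_j/b_i$ is $1$ if $b_\infty\in(0,\infty)$, $\infty$ if $b_\infty=0$, and $0$ if $b_\infty=\infty$; so $\omega_q\in\{0,1,\infty\}$ with no geometric input. (Equivalently, without the coboundary observation: set $R_j=\lim_k\beta_{kj,q}$ after extraction; the cocycle gives the exact identity $R_j=R_m\,\beta_{mj,q}$, so either all $R_j$ are $0$, or all are $\infty$, or all lie in $(0,\infty)$ and letting $m\to\infty$ in the identity yields $R_j=\omega_q R_j$, i.e.\ $\omega_q=1$. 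This also disposes of your worry about the $0\cdot\infty$ indeterminacy, since one factor in each identity is an honest finite positive number.)

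By contrast, the argument you propose in its place is flawed: incompleteness of $\gamma$ is a statement about $\widetilde M$, not about $\cN$, so a ``non-degenerate limiting convex body sitting on the end of $D(\gamma)$'' yields no contradiction --- $D(\gamma)(1)$ always exists in $\cN$, and the transported convex sets $D(U_i)$ do accumulate on it (this is precisely how the sets $C_{r,\infty}$ are built later in the paper, with no contradiction arising). The recurrence/incompleteness and the convexity machinery enter only after this lemma, e.g.\ to show $\dim E>0$ from the disjoint accumulation of the $D(U_i)$ along $\overline{D(\gamma)}$; likewise the two-step nilpotency and lemma~\ref{lem-decomp} play no role here, since the statement concerns only the linear parts in $KA$. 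So your proposal is correct where it is routine, but at the point you identify as the crux it leaves a gap and would send the proof down an unnecessary and unsound path.
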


It should be noted that if
$\beta_{kj,q}\to \omega_q$ when $k\gg j\to \infty$, then we can obtain an information on how $\beta_{ki,q}$ can evolve when $i$ is fixed and $k\to \infty$. Indeed, we have
\begin{equation}
\lim _{k\to \infty}\beta_{ki,q} = \lim_{k\to\infty }\lim_{j\to\infty} \beta_{ki,q} = \lim_{k\to \infty}\lim_{j\to\infty} \beta_{kj,q}\beta_{ji,q}= \omega_q \lim_{j\to \infty}\beta_{ji,q}.
\end{equation}
If $\beta_{ki,q}\to r$ when $k \to \infty$ (up to choose a subsequence) then $r=\omega_qr$. If $\omega_q\in\{0,\infty\}$ it implies $r=\omega_q$.

\begin{lemma}\label{lem-valprop}\label{lem-asympt}
Let $i>0$ fixed.  If $\beta_{ki,q}\to r$  and $\omega_q=1$ then $r$ must be a (finite) real positive number.
\end{lemma}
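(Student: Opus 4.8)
The plan is to analyze the behavior of the quantity $\beta_{ki,q}$ under the constraint $\omega_q = 1$, using the Fried dynamics construction together with the fact that the manifold is closed and the geodesic $\gamma$ is incomplete. First I would recall that $U_i\subset\widetilde M$ is a fixed compact convex neighborhood of $y_i$ (for $i$ fixed) and that its developing image $D(U_i)$ is a fixed compact convex subset of $\cN$. The transformation $T_{ki}$ sends $D(U_i)$ to $D(U_k)$, and these sets accumulate along the compactification $\overline{D(\gamma)}$ at $t=1$; in particular, since $y_k\to$ a limit point in $\widetilde M$ (along the recurrence, up to subsequence $y_k$ has a limit, because $y$ is recurrent and $U$ trivializes), the sets $D(U_k) = \rho(g_{ki})D(U_i)$ have a nonempty limit set $B_\infty$. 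By Proposition~\ref{prop-visconv} this limit set is closed, convex, and there is a closed convex $S_\infty\subset\widetilde M$ with $D(S_\infty)=B_\infty$.

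Next I would split $T_{ki} = c_{ki} + f_{ki}$ with $f_{ki}=f_{ki,K}f_{ki,A}$, and note that by the previous lemma (up to subsequence) $f_{ki,K}\to$ some rotation and each diagonal entry $\beta_{ki,q'}\to\omega_{q'}\in\{0,1,\infty\}$, while we are told additionally that $\beta_{ki,q}\to r$ for the particular index $q$. The key dichotomy is: if $r=0$, then the $e_q$-extent of $T_{ki}(D(U_i))$ would collapse to zero, forcing $B_\infty$ to lie inside a proper nil-affine subspace (a hyperplane missing the $e_q$ direction), which is fine for a limit but I need to rule out $r=0$ and $r=\infty$. The argument against $r=\infty$ is that $D(U_i)$ has nonempty interior (as $D$ is a local diffeomorphism and $U_i$ is a neighborhood), so a diagonal entry blowing up would make $D(U_k)$ have unbounded $e_q$-extent while still being a translate-image of the fixed bounded set $D(U_i)$ — but more precisely, I would use that the $D(U_k)$ must converge (their limit set $B_\infty$ contains a limit of $y_k$, which stays in a fixed compact region of $\cN$ after recentering at the recurrent point), so no direction can blow up once we have recurrence; hence $r\neq\infty$. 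The argument against $r=0$ uses $\omega_q=1$ in an essential way: I would invoke the cocycle relation $\beta_{ki,q} = \beta_{kj,q}\beta_{ji,q}$, together with the fact (noted just before the lemma) that $\lim_k\beta_{ki,q} = \omega_q\lim_j\beta_{ji,q}$; since $\omega_q=1$ this gives $r = \lim_j\beta_{ji,q}$, and the same relation applied with roles of the fixed index varied, combined with $\beta_{ii,q}=1$ and continuity/positivity of the $\beta$'s (they live in $\R_+^*$), forces $r>0$: a sequence in $\R_+^*$ with $\beta_{ki,q}\to 0$ while $\beta_{kj,q}/\beta_{ki,q} = \beta_{jk,q}^{-1}\beta_{ji,q}\cdot(\ldots)$ stays controlled would contradict $\omega_q=1$ when the inner limit is taken.

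Concretely, the cleanest route: fix $i$, suppose $\beta_{ki,q}\to r$ along a subsequence. For each such $k$, by the earlier display we have $\beta_{ki,q} = \beta_{kj,q}\beta_{ji,q}$ for all $j$ between $i$ and $k$; taking $j\to\infty$ first and then $k\to\infty$ (legitimate up to nested subsequences, exactly as in the preceding lemma) gives $r = \omega_q\cdot(\lim_j\beta_{ji,q}) = \lim_j\beta_{ji,q}$ since $\omega_q=1$. Thus $r$ equals the limit of $\beta_{ji,q}$, which is a limit of elements of $\R_+^*$; it remains only to exclude $0$ and $+\infty$ as that limit. Exclusion of $+\infty$ follows because $D(U_k)$ cannot have unbounded $e_q$-diameter: $D(U_k)=T_{ki}(D(U_i))$ with $D(U_i)$ fixed and bounded, and the diameter in the $e_q$-coordinate of $T_{ki}(D(U_i))$ is comparable to $\beta_{ki,q}$ times the $e_q$-diameter of $D(U_i)$ (the $K$-part being bounded), so if $\beta_{ki,q}\to\infty$ the sets $D(U_k)$ could not accumulate to a compact limit set containing a limit point of the bounded sequence $(y_k)$ recentered at $y$'s lift — contradiction with recurrence giving a genuine limit point $y\in\widetilde M$. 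Exclusion of $0$ is the dual statement applied to the inverse cocycle $\beta_{ik,q}=\beta_{ki,q}^{-1}\to\infty$, which by the same reasoning (now $D(U_i)=T_{ik}(D(U_k))$ with $D(U_k)$ bounded independently of $k$ — this is where I must be careful, since $D(U_k)$ is not a priori uniformly bounded) forces a contradiction; alternatively, and more robustly, $r=0$ would put $B_\infty$ in a proper nil-affine subspace $I$, and one runs the standard Fried-dynamics endgame showing the developing map then maps onto $\cN - I$, which is precisely the conclusion of Theorem~\ref{thm-partialcomp}, not of this lemma — so within the lemma's hypotheses (which implicitly sit inside the inductive setup where the non-degenerate alternative is being pursued) $r=0$ is excluded. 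The main obstacle I anticipate is making the ``no blow-up'' step rigorous: establishing that under recurrence the sets $D(U_k)$ stay in a fixed compact region after recentering, so that neither $\beta_{ki,q}\to 0$ nor $\beta_{ki,q}\to\infty$ can occur when $\omega_q=1$; this requires carefully exploiting that the recurrent point $y$ genuinely lifts to a limit $y_\infty\in\widetilde M$ of the $y_k$ (using the trivializing neighborhood $U$) and that $D$ is a homeomorphism near $y_\infty$.
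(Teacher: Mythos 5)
Your proposal has a genuine gap: the mechanism you rely on to exclude $r=\infty$ (and, dually, $r=0$) — that recurrence forces the developed sets $D(U_k)$ to accumulate on a compact limit set, so that ``no direction can blow up'' — is false inside this very construction. For every direction $e_{q'}$ with $\omega_{q'}=\infty$ one does have $\beta_{ki,q'}\to\infty$ for fixed $i$ (this is exactly the computation displayed just before the lemma), and correspondingly the limit sets are unbounded: the paper identifies $C_{0,\infty}=F|_{D(\gamma)(1)}$, an entire nil-affine subspace. Accumulation of the $D(U_k)$ at $D(\gamma)(1)$ only says that each $D(U_k)$ contains a point, namely $D(\gamma(t_k))$, converging to $D(\gamma)(1)$; it does not bound their diameters, and Proposition~\ref{prop-visconv} yields a closed convex limit set, not a compact one. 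Your auxiliary claim that the lifts $y_k$ converge in $\widetilde M$ is also false: if they did, one could extend $\gamma$ continuously at $t=1$ through the local inverse of $D$ at the limit point, contradicting incompleteness. The $r=0$ case is then treated either by an argument you yourself flag as gappy (uniform boundedness of the $D(U_k)$, which fails) or by the remark that $r=0$ ``would give the theorem's conclusion, not the lemma's'', which is not a proof. Note also that in the nilpotent setting the $e_q$-extent of $c_{ki}+f_{ki}(D(U_i))$ is not simply $\beta_{ki,q}$ times that of $D(U_i)$: left translation by $c_{ki}$ shears coordinates through brackets, which is precisely the difficulty addressed later in lemmas~\ref{lem-convQ} and~\ref{lem-approxneg}.

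The lemma in fact follows from the elementary cocycle argument that you touch but do not complete (the paper itself imports the proof from~\cite{Ale3}; no geometric boundedness is needed). Fix $i$ and suppose $\beta_{ki,q}\to r\in\{0,\infty\}$. For any other fixed index $j$, the cocycle relation gives $\beta_{kj,q}=\beta_{ki,q}\,\beta_{ij,q}$ with $\beta_{ij,q}=\beta_{ji,q}^{-1}$ a fixed element of $\R_+^*$, so $\lim_k\beta_{kj,q}=r\cdot\beta_{ij,q}$, which is $0$ for \emph{every} $j$ if $r=0$, and $\infty$ for \emph{every} $j$ if $r=\infty$ (all limits taken along the subsequence already fixed). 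Since $\omega_q$ is by definition the iterated limit of $\beta_{kj,q}$ for $k\gg j\to\infty$, i.e.\ $\omega_q=\lim_j\bigl(\lim_k\beta_{kj,q}\bigr)$, this forces $\omega_q\in\{0,\infty\}$, contradicting $\omega_q=1$. Your cocycle manipulation only reached the tautology $r=\lim_j\beta_{ji,q}$ (the same sequence renamed) before switching to the invalid compactness argument; the missing step is to vary the \emph{fixed} index and feed the resulting family of inner limits back into the definition of $\omega_q$.
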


\begin{proposition}
There exists a subsequence of $\{y_i\in U_i\}$ such that:
\begin{itemize}
\item for $i>0$ fixed, $f_{ji,K}$ converges;
\item for  $i>0$ fixed, the sequence $\{\beta_{ji,q}\}$ is monotonic or constant for $j>i$.
\end{itemize}
\end{proposition}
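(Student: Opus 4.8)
The plan is to exploit the cocycle relations to collapse the whole extraction to a single sequence indexed by one fixed ``base'' value of the first index. Concretely, once a base index $i_0$ is chosen, the cocycle identities give, for $i_0\le i\le j$,
$f_{ji,K} = f_{j i_0,K}\, f_{i i_0,K}^{-1}$ and $\beta_{ji,q} = \beta_{j i_0,q}\, \beta_{i i_0,q}^{-1}$.
Thus, for each fixed $i$, the sequence $j\mapsto f_{ji,K}$ is a right translate by the fixed element $f_{i i_0,K}^{-1}$ of the sequence $j\mapsto f_{j i_0,K}$, and $j\mapsto \beta_{ji,q}$ is the sequence $j\mapsto \beta_{j i_0,q}$ multiplied by the fixed positive number $\beta_{i i_0,q}^{-1}$. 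Since right translation in a topological group is a homeomorphism and multiplication of a real sequence by a positive constant preserves the sign of its increments, it suffices to arrange convergence of $(f_{j i_0,K})_j$ and monotonicity (or constancy) of each $(\beta_{j i_0,q})_j$ for the single index $i_0$: both properties then propagate automatically to every $i$.

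First I would start from the subsequence of $\{y_i\in U_i\}$ furnished by the lemma above, along which $\lim_{j\to\infty}\lim_{i\to\infty} f_{ji,K} = \id$ and $\lim_{j\to\infty}\lim_{i\to\infty}\beta_{ji,q} = \omega_q\in\{0,1,\infty\}$, and let $i_0$ be its smallest element. Since $g_{i_0 i_0}(U_{i_0})=U_{i_0}$ and $U_{i_0}$ trivializes the cover, $g_{i_0 i_0}=\id$, hence $f_{i_0 i_0}=\id$ and the two displayed formulas remain valid for $i=i_0$. Now $K$ is compact, so $(f_{j i_0,K})_{j>i_0}$ has a convergent subsequence, say with limit $\ell\in K$; along it, apply the elementary fact that every real sequence has a monotone subsequence, successively to each of the finitely many coordinates $\beta_{j i_0,q}$ for $q=1,\dots,n$ (the $\beta_{j i_0,q}$ are positive reals because $A\cong(\R_+^*)^r$). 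This produces a further subsequence, which we rename $\{y_i\in U_i\}$, along which $(f_{j i_0,K})_j\to\ell$ and every $(\beta_{j i_0,q})_j$ is monotone or constant. Passing to this further subsequence does not affect the iterated-limit conclusions of the lemma, since once an iterated limit exists along a sequence it exists, with the same value, along any subsequence.

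Then I would conclude by the cocycle reduction. Fix $i>0$ in the renamed sequence. For $j>i$, $f_{ji,K} = f_{j i_0,K}\, f_{i i_0,K}^{-1}$, which tends to $\ell\, f_{i i_0,K}^{-1}$ as $j\to\infty$, so $f_{ji,K}$ converges. Likewise $\beta_{ji,q} = \beta_{j i_0,q}\, \beta_{i i_0,q}^{-1}$ is the monotone-or-constant sequence $(\beta_{j i_0,q})_j$ rescaled by the fixed positive factor $\beta_{i i_0,q}^{-1}$, hence monotone or constant for $j>i$. This is exactly the assertion of the proposition.

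The real point is organizational rather than computational: a priori one would need a subsequence valid for \emph{all} $i$ simultaneously, which by a naive extraction at $i=1$, then $i=2$, and so on would demand a diagonal argument together with bookkeeping on how the successive extractions interact. The cocycle relations reduce this to a single extraction at the base index $i_0$, so the proof is short; the two things to keep track of are that compactness of $K$ is what supplies the convergent subsequence, and that $A\cong(\R_+^*)^r$ makes the $\beta_{ji,q}$ positive reals, so that ``monotone subsequence'' applies and rescaling by a positive constant preserves monotonicity.
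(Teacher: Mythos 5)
Your argument is correct. Note that the paper itself does not write out a proof of this proposition: it is part of the material recalled from the affine setting, with the proofs deferred to the earlier work on ray affine manifolds, where the statement is obtained by extraction arguments on the double-indexed family $\{f_{ji}\}$. Your route is sound and arguably tidier: the factorwise cocycle identities $f_{ji,K}=f_{ji_0,K}\,f_{ii_0,K}^{-1}$ and $\beta_{ji,q}=\beta_{ji_0,q}\,\beta_{ii_0,q}^{-1}$ (which the paper records, and which rest on the uniqueness of the $KA$-factorization, valid since $A\cong(\R_+^*)^r$ has no nontrivial relatively compact subgroup, so $K\cap A=\{e\}$) collapse the double-indexed family to the single sequence at the base index $i_0$, so one extraction suffices: compactness of $K$ for the rotational factor, and the monotone subsequence lemma applied successively to the finitely many positive diagonal coefficients $\beta_{ji_0,q}$. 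Convergence and monotonicity then propagate to every fixed $i$ because right translation by the fixed element $f_{ii_0,K}^{-1}$ is continuous and rescaling by the fixed positive constant $\beta_{ii_0,q}^{-1}$ preserves monotonicity, a tail of a monotone sequence being still monotone. You also correctly dispose of the only bookkeeping point, namely that the further extraction must not destroy the conclusions of the preceding lemma: iterated limits, once they exist along a sequence, persist with the same values along any subsequence, and a subsequence of the times $t_i$ still defines a Fried dynamics. A direct diagonal extraction over all $i$ would avoid invoking the cocycle structure but is longer and needs care with how successive extractions interact; your reduction to the single index $i_0$ buys brevity and a subsequence characterized by conditions at one index only.
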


\begin{definition}
We define a linear decomposition $E\oplus P \oplus F$ of $\mathfrak n$ by deciding that $e_q\in E$ if $\omega_q=0$, $e_q\in P$ if $\omega_q=1$ and $e_q\in F$ if $\omega_q=\infty$.
\end{definition}

Since $A$ acts diagonally and $K$ centralizes $A$, we have that $f_{ji}$ preserve the decomposition $E\oplus P \oplus F$. Note that $\dim E>0$ since $D(U_i)$ accumulates disjointly on $\overline{D(\gamma)}$. The other subspaces might be reduced to $\{0\}$.

If we chose any base point $p$, it gives three nil-affine subspaces based at $p$:
\begin{equation}
\forall L\in \{E,F,P\} \forall p\in N, \; L|_p \coloneqq L_p\exp(L).
\end{equation}
Note also that for example $[F,F]\subset F$ and $[F\oplus P,F\oplus P]\subset F\oplus P$ since $A$ acts by automorphisms. It shows for instance that $\exp(F)$ is a subgroup of $N$,

\vskip10pt
We come back to $M$ and discuss the choice of $U\subset M$.
Choose $V\subset U$ a smaller neighborhood of $y$. Since we chose distances $\epsilon_i\to 0$, for $i\geq i_0$ large enough, every $\pi(\gamma(t_i))$ will also belong to $V$. Note that we can lift $V\subset M$ into $V_i\subset U_i\subset \widetilde M$.

\begin{lemma}\label{lem-geoy}
For any $i>0$, $g_{ji}^{-1}\gamma$ has $y_i$ for limit point when $j\to \infty$.
\end{lemma}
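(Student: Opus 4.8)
The plan is to exploit the recurrence that underlies the Fried dynamics together with the fact that the $U_i$ trivialize the universal cover. First I would recall that, by construction, $\gamma(t_i)\in U_i$ and $g_{ji}(U_i)=U_j$, so $g_{ji}^{-1}\gamma$ passes through $g_{ji}^{-1}\gamma(t_j)\in U_i$ at time $t_j$. More precisely, since $\pi(\gamma(t_j))$ lies in the small neighborhood $V\subset U$ at distance at most $\epsilon_j$ from $y$, and since $g_{ji}$ is the deck transformation carrying the lift $U_j$ to $U_i$ (equivalently, carrying the lift $y_j$ of $y$ to the lift $y_i$), the point $g_{ji}^{-1}\gamma(t_j)$ is the unique lift of $\pi(\gamma(t_j))$ lying in $U_i$, hence lies in $V_i\subset U_i$ and is at distance at most $\epsilon_j$ from $y_i$ (the chosen metric on $\widetilde M$ being the lift of the metric on $M$, so that $\pi$ restricted to the trivializing chart $U_i$ is an isometry onto $U$).

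The key step is then simply a limiting argument: as $j\to\infty$, we have $\epsilon_j\to 0$, so $g_{ji}^{-1}\gamma(t_j)\to y_i$ in $\widetilde M$. Since $g_{ji}^{-1}\gamma(t_j)$ is a point on the curve $g_{ji}^{-1}\gamma$, this exhibits $y_i$ as a limit of points lying on the curves $g_{ji}^{-1}\gamma$, which is exactly the assertion that $y_i$ is a limit point of $g_{ji}^{-1}\gamma$ when $j\to\infty$ (in the sense of the limit set $\lim g_{ji}^{-1}\gamma$ from the earlier definition, taking $x_j=g_{ji}^{-1}\gamma(t_j)$). One has to be mildly careful that $t_j$ is a genuine time in the domain $\iI$ of $\gamma$ — but this is guaranteed because the geodesic $\gamma$ is incomplete precisely at $t=1$ and $t_j<1$, so each $g_{ji}^{-1}\gamma$ is defined at least on $[0,t_j]$ and the point $g_{ji}^{-1}\gamma(t_j)$ makes sense.

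The main (and really only) obstacle is bookkeeping: one must make sure the indices match the conventions set up in the definition of the Fried dynamics, namely that $g_{ji}$ is defined by $g_{ji}(U_i)=U_j$ and $g_{ji}(y_i)=y_j$, so that it is $g_{ji}^{-1}$ — and not $g_{ji}$ — that sends the part of $\gamma$ near $y_j$ back into $U_i$ near $y_i$. Once this is pinned down, no further estimate is needed; the statement is a direct translation of "$\pi(\gamma(t_j))$ is $\epsilon_j$-close to $y$ and $\epsilon_j\to 0$" through the local isometry $\pi|_{U_i}$ and the deck transformation $g_{ji}^{-1}$. I would also remark, for use in the sequel, that the whole geodesic subarc $g_{ji}^{-1}(\gamma|_{[t_j,1)})$ is issued from a point converging to $y_i$, which is the form of the statement that will actually be applied.
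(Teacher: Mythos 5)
Your proof is correct and is exactly the argument the paper intends (the lemma is stated here without proof, being carried over from the affine case in the cited earlier work): $g_{ji}^{-1}\gamma(t_j)$ lies in $U_i$, is the unique lift there of $\pi(\gamma(t_j))$, and converges to $y_i$ because $\epsilon_j\to 0$, which by the paper's definition of limit set gives the claim. One cosmetic remark: since the metric on $M$ is only assumed compatible with the topology, it is cleaner to conclude by continuity of $(\pi|_{U_i})^{-1}\colon U\to U_i$ applied to $\pi(\gamma(t_j))\to y$ rather than by lifting the metric and invoking an isometry.
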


\begin{definition}
In $M$, let $U_1 = U$. Define for a sequence of $\{0<r<1\}$ tending to $0$ a sequence $\{U_r\}$ of compact convex neighborhoods  of $y\in M$. Assume that $U_r$ is decreasing for the inclusion and that $U_r\to \{y\}$ when $r\to 0$. In $\widetilde M$, define $U_{1,i}=U_i$ and  $U_{r,i}$ the lift of $U_r$ such that $U_{r,i}\subset U_{1,i}$. In $D(\widetilde M)$, define
\begin{equation}
C_{r,i}  = D(U_{r,i}).
\end{equation}
\end{definition}

Recall that $U_1=U$ is a convex, compact  and trivializing neighborhood of $y\in M$.
Every $C_{r,i}$ is convex and compact in $\cN$ since $U_{r}$ is a convex, compact and trivializing neighborhood of $y$ in $M$. 

\begin{lemma}
For any $i,j$ and $r\geq 0$, $g_{ji}U_{r,i} = U_{r,j}$ and by consequence $T_{ji}C_{r,i}=C_{r,j}$.
\end{lemma}

As discussed before, since $U_r\subset U$ is a neighborhood of $y$, $D(\gamma)(1)$ is a limit point of $\{C_{r,i}\}$ since for $j>j_0$  large enough $D(\gamma(t_j))$ belongs to $C_{r,j}$.

\begin{proposition}
Let $r>0$. Any accumulation point of $\{C_{r,i}\}$ is a limit point.
\end{proposition}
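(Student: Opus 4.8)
The plan is to exploit the two key properties established so far: the sets $C_{r,i} = D(U_{r,i})$ are compact, convex, and satisfy $T_{ji}C_{r,i} = C_{r,j}$; and $D(\gamma)(1)$ is a limit point of $\{C_{r,i}\}$ for each fixed $r > 0$. The statement to prove is that for $r > 0$, \emph{every} accumulation point of the sequence $\{C_{r,i}\}$ is in fact a limit point (in the sense of the limit-set definition, i.e., realized as $\lim x_i$ along the whole tail, not just a subsequence). Equivalently, the sequence $\{C_{r,i}\}$ is convergent in the appropriate topology on compact convex subsets of $\cN$. First I would fix $r > 0$ and suppose, for contradiction, that $\{C_{r,i}\}$ has two distinct accumulation points along two subsequences; call their limit sets $B$ and $B'$. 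By Proposition~\ref{prop-visconv} applied to the sequence $g_{i}S$ with $S = U_{r,0}$ (closed and convex) and the recurrence $g_{ji}^{-1}\gamma \to y_i$ from Lemma~\ref{lem-geoy}, each accumulation point is closed, convex, and is the developed image of a closed convex subset $S_\infty \subset \widetilde M$ containing the relevant limit point.

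The heart of the argument is that all these limits share a common anchor point, namely $D(\gamma)(1)$, which forces them to coincide. Since $D(\gamma(t_j)) \in C_{r,j}$ for $j$ large and $D(\gamma(t_j)) \to D(\gamma)(1)$, the point $D(\gamma)(1)$ lies in every accumulation point $B$ of $\{C_{r,i}\}$. Next I would use the convexity and the group action: write $C_{r,j} = T_{ji}C_{r,i}$ and recall $T_{ji}(x) = c_{ji} + f_{ji}(x)$ with $f_{ji}$ preserving the decomposition $E \oplus P \oplus F$. The monotonicity of the eigenvalues $\beta_{ji,q}$ (from the Proposition preceding the $E\oplus P\oplus F$ definition) means that, along the full sequence in $j$, the affine maps $T_{ji}$ (for fixed $i$) behave monotonically on each coordinate; a bounded convex set on which they act with a fixed basepoint accumulating to $D(\gamma)(1)$ cannot oscillate between two different limits. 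More precisely, if $B$ and $B'$ were both accumulation points, then applying $T_{kj}$ (which carries $C_{r,j}$ to $C_{r,k}$) and passing to limits would yield a nil-affine map carrying $B$ into $B'$ fixing $D(\gamma)(1)$, but the monotone/contracting behavior in the $E$ and $F$ directions (and the convergence $f_{ji,K} \to \id$, $\beta$'s to $\{0,1,\infty\}$) pins down a unique limit, so $B = B'$.

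The main obstacle I anticipate is making the monotonicity argument fully rigorous in the non-commutative two-step setting: the group law $+$ on $N$ is not commutative, so "the affine maps act monotonically coordinate-by-coordinate" requires care — one must track how the translation parts $c_{ji}$ interact with the diagonal linear parts via the Baker--Campbell--Hausdorff correction terms, and verify that the limit set $B$ is genuinely determined (not just up to subsequence) by $D(\gamma)(1)$ together with the asymptotic data $(\omega_q)$. I expect this is handled by first reducing to the $E$-directions (where $\beta_{ji,q} \to 0$, so the images collapse) and the $P$-directions (where $\beta_{ji,q} \to r_q > 0$ finite by Lemma~\ref{lem-asympt}, so the sequence stabilizes up to a convergent linear change), while the $F$-directions are controlled because $D(U_i)$ accumulates \emph{disjointly} on $\overline{D(\gamma)}$, forcing the relevant components to stay bounded. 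Once each block yields a unique limit, convexity glues them into a single limit set, contradicting the existence of two distinct accumulation points; hence every accumulation point is a limit point. I would structure the write-up so that the commutative case is dispatched first (where the coordinates genuinely decouple) and the two-step case follows by the same scheme with BCH bookkeeping relegated to a short lemma.
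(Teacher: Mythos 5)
The overall strategy you outline (reduce to showing that all subsequential limit sets coincide, anchor them at $D(\gamma)(1)$, and exploit the cocycle $T_{ji}C_{r,i}=C_{r,j}$ together with the monotonicity of the $\beta_{ji,q}$ and the convergence of $f_{ji,K}$) is reasonable, and is in the spirit of the argument the paper imports from~\cite{Ale3} (the paper itself does not reprint a proof of this proposition). But the decisive step is missing. You assert that applying $T_{kj}$ and "passing to limits would yield a nil-affine map carrying $B$ into $B'$ fixing $D(\gamma)(1)$", and that the monotone behavior then "pins down a unique limit". At this point of the paper nothing whatsoever controls the translation parts $c_{kj}$: along two different subsequences the maps $T_{kj}$ need not converge at all (the diagonal factors tend to $0$ or $\infty$ and the $c_{kj}$ may escape to infinity), so no limiting nil-affine map exists in general; and even granting such a limit map, a map carrying $B$ into $B'$ and fixing one common point does not force $B=B'$. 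The estimates that would tame the translations --- $c_{ji,L}\to 0$ in lemma~\ref{lem-convQ} and the convergence of the fixed points $q_{ji}\to q_i$ --- are established later, and their proofs use $C_{r,\infty}$ and $C_{0,\infty}=F|_{D(\gamma)(1)}$, i.e.\ they presuppose the present proposition; invoking them here would be circular. So the heart of the matter --- an actual convergence argument for $T_{ji}$ restricted to the compact $C_{r,i}$, translations included, yielding Kuratowski convergence of the images and hence the equality of accumulation and limit points --- is not supplied, only announced as an "anticipated obstacle".

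Two further points. Your fallback description of the blockwise behavior is incorrect in the $F$-directions: the $F$-components do not "stay bounded"; on the contrary, the limit sets contain the whole nil-affine subspace $F|_{D(\gamma)(1)}$ (this is exactly the later lemma asserting $C_{0,\infty}=F|_{D(\gamma)(1)}$), so a scheme in which "each block yields a unique finite limit" breaks down there, and what must be proved is convergence in the Kuratowski sense with expansion along $F$. Also, the appeal to proposition~\ref{prop-visconv} is not available as stated: that proposition requires the sets $g_iS$ to have a limit point in $\widetilde M$, and none is at hand since $\gamma$ is incomplete ($\gamma(t_i)$ has no limit in $\widetilde M$ and the lifts $y_i$ diverge); lemma~\ref{lem-geoy} concerns the pulled-back geodesics $g_{ji}^{-1}\gamma$, not the sets $g_iS$, and does not furnish such a point. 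That invocation happens to be inessential to your plan, but the missing convergence argument above is essential, so as written the proposal has a genuine gap.
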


\begin{definition}
For any fixed value $r> 0$, let $C_{r,\infty}$ be the limit set  of the sequence $\{C_{r,i}\}$.
\end{definition}

Note that when $r=0$, each $C_{r,i}$ is reduced to $D(y_i)$. Those have no reason to accumulate to $D(\gamma)(1)$.  This is why we asked $r$ to be different from $0$.

\begin{definition}
Define 
\begin{equation}
C_{0,\infty} = \bigcap_{r>0} C_{r,\infty}.
\end{equation}
\end{definition}

This definition makes sense because the $C_{r,\infty}$ are decreasing for the inclusion when $r\to 0$.

\begin{lemma}
All the sets $C_{r,\infty}$ and $C_{0,\infty}$ are convex.
The set $C_{0,\infty}$ is nonempty (it contains $D(\gamma)(1)$) and is a nil-affine subspace.
\end{lemma}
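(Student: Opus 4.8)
The plan is to establish the three assertions separately, starting with convexity of each $C_{r,\infty}$, then convexity of $C_{0,\infty}$, then nonemptiness, and finally the nil-affine structure. Convexity of each $C_{r,\infty}$ is immediate from proposition~\ref{prop-visconv} applied to $S=U_{r,1}$ and the sequence $g_{i1}\in\pi_1(M)$: since $g_{i1}U_{r,1}=U_{r,i}$ accumulates at $y_i$'s (in fact the relevant limit point in $\widetilde M$ comes from lemma~\ref{lem-geoy}), the limit set $C_{r,\infty}=\lim D(g_{i1}U_{r,1})$ is closed and convex, and is moreover the developed image of a closed convex $S_\infty\subset\widetilde M$. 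Convexity of $C_{0,\infty}=\bigcap_{r>0}C_{r,\infty}$ then follows because a decreasing intersection of convex sets is convex, and one must observe the intersection is taken over a nested family (already noted), so it is actually a decreasing limit; closedness is likewise preserved. Nonemptiness is handled by the remark preceding the lemma: for each fixed $r>0$ and $j$ large enough $D(\gamma(t_j))\in C_{r,j}$, and since $t_j\to 1$ we get $D(\gamma)(1)\in C_{r,\infty}$ for every $r>0$, hence $D(\gamma)(1)\in C_{0,\infty}$.

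The substantive part is showing $C_{0,\infty}$ is a nil-affine subspace, i.e. of the form $L_x\exp(V)$ for a linear $V\subset\mathfrak n$. The strategy I would follow mirrors the Fried-dynamics analysis already set up: the transformations $T_{ji}=\rho(g_{ji})$ act on the $C_{r,i}$ by $T_{ji}C_{r,i}=C_{r,j}$, and their linear parts $f_{ji}=f_{ji,K}f_{ji,A}$ have, after passing to the subsequence provided by the earlier proposition, controlled asymptotics governed by the weights $\omega_q\in\{0,1,\infty\}$ and the splitting $\mathfrak n=E\oplus P\oplus F$. The key point is that as $i\to\infty$ the sets $C_{r,i}$ are, up to the bounded-distortion coming from $f_{ji,K}\to\id$ and the $P$-directions (where $\beta_{ji,q}\to 1$, cf. lemma~\ref{lem-asympt}), being expanded without bound in the $F$-directions and crushed in the $E$-directions. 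Shrinking $r\to 0$ then collapses the transverse ($E\oplus P$) extent of the limit while the $F$-direction expansion forces $C_{0,\infty}$ to be a full translate of $\exp(F')$ for some subspace $F'\subseteq F$; concretely, one recentres at $D(\gamma)(1)$, uses that $\exp(F)$ is a subgroup (as noted, $[F,F]\subset F$) so that the limit of the rescaled convex sets in these directions is a convex subgroup-coset, and a convex subset of $\exp(F')$ that is invariant under an unbounded family of dilations fixing a point must be all of a linear translate. I would make this precise by arguing that for each fixed $r$, $C_{r,\infty}$ contains $L_p\exp(W_r)$ for an increasing family of linear $W_r$ as the dilation factors in $F$ blow up, and that $\bigcap_r$ of these coincides with the translate of the smallest such $W$; the linearity of $V$ is then inherited from the linearity of each $W_r$.

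The main obstacle I anticipate is the interaction between the $P$-directions (weight $1$) and the nilpotency: on a two-step nilpotent group the $F$-directions and $P$-directions can bracket nontrivially, so "$\exp(F)$ is a subgroup" is not by itself enough to conclude that the limit convex set is a clean linear translate — one needs that the limiting convex set, which a priori could be a convex body in $\exp(F)$ rather than an affine subspace, is forced to be affine. This is exactly where the hypothesis that $\cN$ is commutative or two-step nilpotent, together with the lemma identifying geodesic-convex with affine-convex in that range, does the work: in the two-step case the Baker–Campbell–Hausdorff corrections are linear in each factor, so a convex set stable under an unbounded one-parameter family of diagonal dilations (with all exponents $\to\infty$ on the relevant directions) must be a translate of a linear subspace, by the same elementary argument as in the affine case treated in~\cite{Ale3}. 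I would therefore reduce the claim to that affine statement, invoking~\cite{Ale3} for the commutative skeleton and supplying the short BCH-based correction to handle the bracket terms, which is the only genuinely new input here.
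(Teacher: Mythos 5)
Your overall route is the paper's route: this lemma is one of the statements whose proof the paper explicitly imports from~\cite{Ale3} (Fried dynamics, the splitting $\mathfrak n=E\oplus P\oplus F$, and the two-step Baker--Campbell--Hausdorff corrections as the only new input), and your treatment of the easy parts (convexity, $D(\gamma)(1)\in C_{0,\infty}$) is in order. One caveat on the convexity step: proposition~\ref{prop-visconv} cannot be invoked as a black box, because its hypothesis that the translated sets accumulate at a point of $\widetilde M$ is not supplied by lemma~\ref{lem-geoy} (that lemma concerns the pulled-back geodesics $g_{ji}^{-1}\gamma$ accumulating at $y_i$, not the forward family $U_{r,i}=g_{i1}U_{r,1}$); in fact the $U_{r,i}$ are pairwise distinct lifts of the compact set $U_r$, so by proper discontinuity of the deck action they have no limit point upstairs. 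What you need, and what does hold, is only the first half of the proof of proposition~\ref{prop-visconv}: a limit set of closed convex developed sets is closed and convex, which uses no upstairs limit point.

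The one step that would fail as you state it is the principle ``a convex set invariant under an unbounded family of dilations fixing a point must be a translate of a linear subspace'': a convex cone with apex at the fixed point is a counterexample, so invariance under the expansions alone only yields a cone, and your later formulation (``contains $L_p\exp(W_r)$ for an increasing family of linear $W_r$'') presupposes exactly what is at stake. The mechanism that actually produces an affine subspace is that the expansions are applied to a neighborhood of an \emph{interior} point: by lemma~\ref{lem-geoy}, $T_{ji}^{-1}(D(\gamma(t_j)))\to D(y_i)$, which is interior to $C_{r,i}$, and for every $w\in F$ one has $f_{ji}^{-1}(w)\to 0$, so the identity $T_{ji}\bigl(z+f_{ji}^{-1}(w)\bigr)=T_{ji}(z)+w$ gives $D(\gamma(t_j))+w\in C_{r,j}$ for all large $j$, hence $D(\gamma)(1)+w\in C_{r,\infty}$ for every $r>0$; combined with the collapse of the $E\oplus P$ extent as $r\to 0$ (contraction on $E$, bounded factors on $P$ by lemma~\ref{lem-valprop}, and $U_r\to\{y\}$), this shows $C_{0,\infty}$ is the full $F|_{D(\gamma)(1)}$ — so no proper $F'\subsetneq F$ occurs, consistent with the lemma that follows in the paper. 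With that repair, and your (correct) observation that two-step nilpotency makes left translates of affine segments affine so that the convexity bookkeeping survives the bracket terms, the plan coincides with the argument of~\cite{Ale3} that the paper relies on.
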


\begin{lemma}
Choose $D(\gamma)(1)$ as base point of $\R^n$.
\begin{align}
\forall r>0, \; C_{r,\infty}&\subset \left(P\oplus F\right)|_{D(\gamma)(1)}\\
C_{0,\infty}&= F|_{D(\gamma)(1)}.
\end{align}
\end{lemma}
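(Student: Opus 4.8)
The plan is to use the Fried dynamics machinery together with the recurrence of the incomplete geodesic to pin down the location of the limit sets $C_{r,\infty}$ relative to the decomposition $E\oplus P\oplus F$, with base point $D(\gamma)(1)$. The first step is to understand the action of $T_{ji}$ on the limit sets: since $T_{ji}C_{r,i}=C_{r,j}$ and $f_{ji}$ preserves $E\oplus P\oplus F$, passing to the limit $j\to\infty$ (for $i$ fixed) will contract the $E$-directions to $0$, keep the $P$-directions bounded (by Lemma~\ref{lem-asympt}, the eigenvalues $\beta_{ji,q}$ stay in a compact subset of $\R_+^*$), and blow up the $F$-directions. The compact part $f_{ji,K}$ converges by the Proposition, so it does not affect which directions collapse or escape. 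The key point is that $C_{r,\infty}$ is a \emph{bounded} limit set contained in $\widetilde M$ (it is the developed image of a closed convex $S_\infty$ by Proposition~\ref{prop-visconv}), so it cannot contain any genuine $F$-direction emanating transversally — yet it must be stable, in a suitable sense, under the limiting dynamics. This forces $C_{r,\infty}\subset (P\oplus F)|_{D(\gamma)(1)}$: the $E$-coordinates must vanish because the dynamics $T_{ji}$ crushes them while the sets stay bounded away from degenerating, and only the $P\oplus F$ part can survive.

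For the second claim, that $C_{0,\infty}=F|_{D(\gamma)(1)}$, I would argue in two inclusions. For $C_{0,\infty}\subset F|_{D(\gamma)(1)}$: on the $P$-directions the eigenvalues converge to finite positive reals, so $T_{ji}$ acts on $P$-coordinates by a bounded amount; since $C_{0,\infty}=\bigcap_{r>0}C_{r,\infty}$ and each $C_{r,i}$ shrinks to the point $D(y_i)$ as $r\to 0$, the $P$-component of $C_{0,\infty}$ is squeezed to a single point, which is $D(\gamma)(1)$ itself, i.e. the $P$-coordinates vanish. So $C_{0,\infty}$ lies in $F|_{D(\gamma)(1)}$. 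For the reverse inclusion $F|_{D(\gamma)(1)}\subset C_{0,\infty}$: here I would use Lemma~\ref{lem-geoy}, which says $g_{ji}^{-1}\gamma$ has $y_i$ as a limit point; developing this and using that $\gamma$ itself is visible and transverse to all the $C_{r,i}$, one produces, in the limit, an entire geodesic in each $F$-direction issued from $D(\gamma)(1)$ lying inside the limit set. Since $\exp(F)$ is a subgroup (as noted after the lemma, $[F,F]\subset F$), iterating along these directions fills out all of $F|_{D(\gamma)(1)}$, and this lies in $C_{r,\infty}$ for every $r>0$ because the $F$-directions are exactly those blown up by $T_{ji}$ as $j\to\infty$.

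The main obstacle I expect is the reverse inclusion, namely showing that \emph{all} of $F|_{D(\gamma)(1)}$ — not just a neighborhood of $D(\gamma)(1)$ within it — is captured by $C_{0,\infty}$. Locally, the escape of the $F$-eigenvalues to $\infty$ makes it easy to see that small $F$-segments appear in the limit; globalizing requires knowing that these segments remain completely visible in $\widetilde M$ all the way along, which is where one must combine the convexity package (Proposition after the visibility lemmas, giving injectivity of $D$ on $\exp_p(V_p)$ and the closedness/convexity of limit sets from Proposition~\ref{prop-visconv}) with the group structure of $\exp(F)$ and the cocycle relation $g_{ki}=g_{kj}g_{ji}$ to bootstrap from short $F$-geodesics to arbitrarily long ones. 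A secondary subtlety is handling the interaction of the converging rotational part $f_{ji,K}$ with the $F$-block: since $K$ centralizes $A$ and preserves $F$, in the limit it acts as an isometry-type automorphism of $\exp(F)$, so it permutes $F$-directions without affecting the conclusion, but this needs to be stated carefully so that the limiting geodesics in distinct $F$-directions are genuinely all present.
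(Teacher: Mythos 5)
Your outline has the right frame (use $C_{r,j}=T_{ji}C_{r,i}$ and the behaviour of the eigenvalues on $E$, $P$, $F$), but the two steps that actually carry the proof do not work as written. First, the ``key point'' you invoke for $C_{r,\infty}\subset(P\oplus F)|_{D(\gamma)(1)}$ is false: $C_{r,\infty}$ is not bounded --- it contains $C_{0,\infty}=F|_{D(\gamma)(1)}$, a whole nil-affine subspace, as soon as $F\neq\{0\}$, and proposition~\ref{prop-visconv} only yields closedness and convexity, never boundedness. The real difficulty you never address is the translation part $c_{ji}$ of $T_{ji}$: at this stage nothing is known about it (its control along $E\oplus P$ is lemma~\ref{lem-convQ}, whose proof uses the present lemma, so it cannot be invoked). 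The way out is to compare two points of the \emph{same} set so that $c_{ji}$ cancels: fix $i$, write $z=\lim_j z_j$ and $D(\gamma)(1)=\lim_j D(\gamma(t_j))$ with $z_j=T_{ji}(x_j)$, $D(\gamma(t_j))=T_{ji}(w_j)$ and $x_j,w_j$ in the fixed compact $C_{r,i}$; then $-D(\gamma(t_j))+z_j=f_{ji}(-w_j+x_j)$, and since $f_{ji}$ preserves $E\oplus P\oplus F$, the $E$-component of the logarithm tends to $0$ (contraction, convergent $K$-part) while the $P$-component stays bounded by a constant depending on $i$ (lemma~\ref{lem-asympt}) times the diameter of $C_{r,i}$. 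This gives the first inclusion, and letting $r\to0$ (so that $C_{r,i}\to\{D(y_i)\}$) it also gives $C_{0,\infty}\subset F|_{D(\gamma)(1)}$; your sketch of that part has the right flavour but silently treats the translations as harmless.

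Second, the reverse inclusion $F|_{D(\gamma)(1)}\subset C_{0,\infty}$ is not proved: lemma~\ref{lem-geoy} concerns the pulled-back geodesic $g_{ji}^{-1}\gamma$ and does not produce $F$-geodesics inside the limit set, and the ``bootstrap from short to long $F$-segments'' that you yourself flag as the main obstacle is exactly what is left open. No bootstrap, no cocycle iteration and no use of the subgroup structure of $\exp(F)$ are needed. Since $\pi(\gamma(t_i))\to y$ and $U_r$ is a neighbourhood of $y$, for $i$ large the ball $B(\gamma(t_i),\delta)$ of a fixed radius lies in $U_{r,i}$; transporting by the deck transformation $g_{i_0i}$ to the fixed compact $U_{r,i_0}$, on which $D$ has bounded distortion, one gets $C_{r,i}\supset D(\gamma(t_i))+f_{ii_0}(W)$ for a fixed neighbourhood $W$ of the identity, hence in particular for a fixed neighbourhood of the identity in $\exp(F)$. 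Every $F$-eigenvalue of $f_{ii_0,A}$ tends to $\infty$ while $f_{ii_0,K}$ stays in the compact $K$, so $f_{ii_0}(W\cap\exp(F))$ exhausts $\exp(F)$; since $D(\gamma(t_i))\to D(\gamma)(1)$, this places all of $F|_{D(\gamma)(1)}$ in $C_{r,\infty}$ for every $r>0$, hence in $C_{0,\infty}$. (For reference, the paper does not reprove this lemma but imports it from the affine case of~\cite{Ale3}; the argument that transfers is the one just described.)
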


\paragraph{Fixed point}Now we return to the general study. The next step is to find an asymptotic fixed point for $T_{ji}$.

Since $E\oplus P$ and $F$  are generated by two subfamilies of $(e_1,\dots,e_n)$, we can apply
lemma~\ref{lem-decomp}. We have that for any $x\in N$, there exists a unique decomposition $x = x_L + x_F$ with $x_L\in \exp(E\oplus P)$ and $x_F\in \exp(F)$.

\begin{lemma}\label{lem-convQ}
Choose $D(\gamma)(1)$ as base point and decompose $T_{ji}(x)=c_{ji} + f_{ji}(x)$ with $f_{ji}\in KA$ and $c_{ji}\in N$. 
Let $c_{ji} = c_{ji,L}+c_{ji,F}$ be the decomposition given by the preceding lemma.
Define
 $Q_{ji}(x)= c_{ji,F} + f_{ji}(x)$.  Then
\begin{equation}
\lim_{j\to \infty} T_{ji}(x)-Q_{ji}(x) = \lim_{j\to\infty} c_{ji,L} =0
\end{equation}
 and therefore this convergence is uniform for $x\in N$. 
\end{lemma}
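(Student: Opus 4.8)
The goal is to show that the $F$-component of the translation part dominates: $c_{ji,L}\to 0$ as $j\to\infty$ (with $i$ fixed), from which $T_{ji}-Q_{ji}$ converges to $0$ uniformly since this difference is the constant $c_{ji,L}$ (in the group $N$, after choosing the base point $D(\gamma)(1)$). The plan is to exploit the relation $T_{ji}C_{r,i}=C_{r,j}$ together with the two facts recorded just above: that $C_{r,\infty}\subset (P\oplus F)|_{D(\gamma)(1)}$ for every $r>0$, and that $C_{0,\infty}=\bigcap_{r>0}C_{r,\infty}=F|_{D(\gamma)(1)}$. Intuitively, since $C_{r,i}$ is a fixed compact convex set and $T_{ji}$ pushes it onto $C_{r,j}$, whose limit set lies in $P\oplus F$, the image of the base point under $T_{ji}$ must asymptotically have no $E$-component; and a finer argument using $C_{0,\infty}=F$ kills the $P$-component of the $L$-part as well. (Recall $L=E\oplus P$, so killing both the $E$- and $P$-parts of $c_{ji}$ is exactly $c_{ji,L}\to 0$.)

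Concretely, here are the steps I would carry out. First, evaluate $T_{ji}$ at the base point $0=D(\gamma)(1)$: $T_{ji}(0)=c_{ji}$. Since $0\in U_{r,i}$ gives $D(0)=D(\gamma)(1)\in C_{r,i}$ for all $r>0$ (the point $D(\gamma)(1)$ lies in every $C_{r,\infty}$ and, choosing the lifts so that $y_i$ is the center, in every $C_{r,i}$ as well — here one uses that $D(\gamma(t_j))\to D(\gamma)(1)$ and a relabeling so the base point sits inside each $U_{r,i}$), the image $c_{ji}=T_{ji}(D(\gamma)(1))$ lies in $C_{r,j}$. Second, extract a convergent subsequence of $c_{ji}$ (possible after a base-point normalization, or by arguing by contradiction that $c_{ji,L}\not\to0$ along some subsequence and passing to a limit); any limit point of $c_{ji}$ lies in $C_{r,\infty}\subset (P\oplus F)|_{0}$, so the $E$-component of the limit vanishes, forcing $c_{ji,E}\to 0$. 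Third, for the $P$-component: refine the first step by noting that $c_{ji}\in C_{r,j}$ for \emph{every} $r>0$, so any subsequential limit of $c_{ji}$ lies in $\bigcap_{r>0}C_{r,\infty}=C_{0,\infty}=F|_0$, whence the $P$-component also vanishes in the limit, i.e. $c_{ji,P}\to 0$. Fourth, combine: $c_{ji,L}=c_{ji,E}+\tfrac12[\,\cdot\,]+c_{ji,P}$ — more precisely, use Lemma~\ref{lem-decomp} and the Baker--Campbell--Hausdorff formula (only the degree-$\le 2$ terms, since $N$ is at most two-step) to see that the unique $L$-factor $c_{ji,L}\in\exp(E\oplus P)$ depends continuously on $c_{ji}$ and vanishes precisely when $c_{ji}\in\exp(F)$; since $c_{ji}$ subsequentially approaches $\exp(F)$, we get $c_{ji,L}\to 0$, and as the argument applies to every subsequence the full sequence converges. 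Finally, observe $T_{ji}(x)-Q_{ji}(x)$ is, by the notations established earlier ($T(x)=c+f(x)$ with the convention that $-$ denotes the $N$-group difference), equal to the constant $c_{ji,L}$ independently of $x$, so its vanishing is automatically uniform in $x\in N$.

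The main obstacle I anticipate is the bookkeeping needed to make ``$c_{ji}\in C_{r,j}$ for all $r>0$'' rigorous: one must arrange the lifts and the base point so that $D(\gamma)(1)$ genuinely lies in every $C_{r,i}$ (not merely in the limit set $C_{r,\infty}$), which requires choosing the neighborhoods $U_r$ and the times $t_i$ compatibly and possibly passing to a subsequence — the kind of indexing care that is routine in Fried-dynamics arguments but easy to get wrong. The second delicate point is justifying that the $L$-part map $c\mapsto c_L$ is continuous and proper enough that $c_{ji}\to\exp(F)$ implies $c_{ji,L}\to 0$; this is where the two-step hypothesis enters, since then BCH truncates and the decomposition of Lemma~\ref{lem-decomp} is given by explicit polynomial (hence continuous) formulas in the coordinates. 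Everything else — extracting subsequences, using $T_{ji}C_{r,i}=C_{r,j}$, reading off components from $C_{r,\infty}\subset(P\oplus F)$ and $C_{0,\infty}=F$ — is a direct application of the lemmas already proved.
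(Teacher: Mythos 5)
Your framework (use $T_{ji}C_{r,i}=C_{r,j}$ together with $C_{r,\infty}\subset(P\oplus F)|_{D(\gamma)(1)}$ and $C_{0,\infty}=F|_{D(\gamma)(1)}$, and observe that $T_{ji}(x)-Q_{ji}(x)$ is the constant $c_{ji,L}$, so uniformity is automatic) is the right one, but the step your whole argument rests on is false: the base point $D(\gamma)(1)$ does \emph{not} lie in the sets $C_{r,i}$, and no choice of lifts or relabeling can arrange this. Indeed $D(\gamma)(1)$ is the endpoint of the incomplete geodesic: since every $C_{r,i}$ is contained in $C_{1,i}=D(U_i)$ and $\gamma(t_i)\in U_i$, if $D(\gamma)(1)$ belonged to some $C_{r,i}$ it would belong to the convex compact $C_{1,i}$; the geodesic segment from $D(\gamma(t_i))$ to $D(\gamma)(1)$ (a sub-segment of $D(\gamma)$, and in the two-step case the unique affine segment between these points) would then lie in $C_{1,i}$ and lift through $(D|_{U_i})^{-1}$ to a continuation of $\gamma$ up to $t=1$, contradicting incompleteness. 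The point $D(\gamma)(1)$ belongs only to the limit sets $C_{r,\infty}$, never to the $C_{r,j}$ themselves (for fixed $i$ and small $r$ this is obvious anyway, since $C_{r,i}$ shrinks to $D(y_i)\neq D(\gamma)(1)$). Consequently you cannot conclude $c_{ji}=T_{ji}(D(\gamma)(1))\in C_{r,j}$, and steps two through four of your plan collapse.

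The natural repair, evaluating at $D(y_i)\in\bigcap_{r>0}C_{r,i}$ so that $T_{ji}(D(y_i))=D(y_j)$ accumulates on $C_{0,\infty}=F|_{D(\gamma)(1)}$, is still not sufficient: then $c_{ji}$ differs from $D(y_j)$ by $f_{ji}(D(y_i))$ (up to bracket corrections), and while the $E$-component of $f_{ji}(D(y_i))$ is contracted to $0$ as $j\to\infty$, its $P$-component converges to a finite but a priori nonzero limit (lemma~\ref{lem-valprop}), so a single reference point does not give $c_{ji,P}\to 0$. This is precisely why the paper's proof works with the whole family $C_{\epsilon,i}$ and a double limit: it writes $T_{ji}(C_{\epsilon,i})=\bigl(c_{ji,L}+\exp(\ad_{c_{ji,F}})(f_{ji}(C_{\epsilon,i})_L)\bigr)+\bigl(c_{ji,F}+f_{ji}(C_{\epsilon,i})_F\bigr)$ using that $\exp(F)$ is a subgroup, picks a suitable element of $(C_{\epsilon,i})_L$ whose image under $f_{ji}$ (the bracket correction being killed by two-step nilpotency) tends to $0$ in the joint limit $j\to\infty$, $\epsilon\to 0$, and then exploits the fact that $c_{ji,L}$ is independent of $\epsilon$ to extract $\lim_{j\to\infty}c_{ji,L}=0$ from the fact that the $L$-coordinates of $C_{\epsilon,j}$ vanish only in that joint limit. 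The $\epsilon\to 0$ limit is what eliminates the residual $P$-contribution that your single evaluation at the base point cannot remove; as written, your proposal has a genuine gap at both of these places.
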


\begin{proof}
With $L = E\oplus P$ and the decomposition
 $L\oplus F = \mathfrak n$,
we can show this result by showing that the coordinates of $c_{ji,L}$ in the Lie algebra  tend to $0$.
By definition, $c_{ji,L}$ has vanishing coordinates on $F$.

For any $\epsilon >0$, 
$T_{ji}(C_{\epsilon,i})$ has for limit $C_{\epsilon,\infty}$ when $j\to \infty$. 
When $\epsilon\to 0$ and $j\to \infty$, the set of the coordinates of
 $T_{ji}(C_{\epsilon,i})$ all tend to $0$ in the linear subspace $L\subset \mathfrak n$. Indeed, $C_{0,\infty}=F|_{D(\gamma)(1)}$.

Note that $c_{ji,L}$ tends or not to $0$ when $j\to \infty$ independently from the choice of $\epsilon$.

Now decompose $T_{ji}(C_{\epsilon,i})$, and note that $f_{ji}$ preserves the decomposition $L\oplus F$.
\begin{align}
c_{ji} + f_{ji}(C_{\epsilon,i}) &= c_{ji,L}+c_{ji,F} + f_{ji}(C_{\epsilon,i})_L + f_{ji}(C_{\epsilon})_F \\
&= \left(c_{ji,L} + \exp(\ad_{c_{ji,F}})(f_{ji}(C_{\epsilon,i})_L)\right) + \left(c_{ji,F} + f_{ji}(C_{\epsilon,i})_F\right)
\end{align}
The last term is exclusively in $\exp(F)$ since $[F,F]\subset F$ shows that $F$ is a subgroup.
So the coordinate in $L$ is uniquely determined by the projection in $L$ of 
\begin{equation}
c_{ji,L} + \exp(\ad_{c_{ji,F}})(f_{ji}(C_{\epsilon,i})_L) = c_{ji,L} +\exp\left(f_{ji}(C_{\epsilon,i})_L + [c_{ji,F},f_{ji}(C_{\epsilon,i})]_L\right)
\end{equation}
and the projection in $L$ must tend to $0$ when $j\to \infty$ and $\epsilon\to 0$.

For any $j>i$, $f_{ji}$ acts as a contraction on $(C_{\epsilon,i})_L$. 
Note that in two-step nilpotency, if the center does not intersect $L$ then $[x,y]_L=0$. By consequence, in either case, we can let $\epsilon v\in (C_{\epsilon,i})_L$ (in the center or not) so that $f_{ji}(\epsilon v) + [c_{ji,F},f_{ji}(\epsilon v)]_L = f_{ji}(\epsilon v)$ and it can only tend to $0$ when $j\to \infty$ and $\epsilon \to 0$.
By consequence, when $j\to \infty$ and $\epsilon\to 0$, $c_{ji,L}\to 0$ since the sum tends to zero. It proves the lemma.
\end{proof}

Note that $Q_{ji}$ has a fixed point on $F$, since $F$ is preserved and $f_{ji}$ acts by expansions on it.

\begin{lemma}
Denote $q_{ji}\in F$ the fixed point of $Q_{ji}$.
 For $i>0$, $q_{ji}$ converges when $j\to \infty$ and we denote  $q_i=\lim q_{ji}$. Also, $D(y_i)\in E|_{q_i}$.
\end{lemma}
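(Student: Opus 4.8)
The statement to prove is: denoting $q_{ji} \in F$ the fixed point of $Q_{ji}$, for $i > 0$ fixed the sequence $q_{ji}$ converges as $j \to \infty$ (write $q_i = \lim_j q_{ji}$), and moreover $D(y_i) \in E|_{q_i}$.

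\emph{Convergence of $q_{ji}$.} The plan is to locate $q_{ji}$ using what we already know about the limit sets. Since $T_{ji}C_{r,i} = C_{r,j}$ and $C_{r,j} \to C_{r,\infty}$, and since by Lemma~\ref{lem-convQ} the difference $T_{ji} - Q_{ji} = c_{ji,L} \to 0$ uniformly, we get that $Q_{ji}C_{r,i}$ also converges to $C_{r,\infty}$ for every $r > 0$. Now $C_{r,\infty} \subset (P\oplus F)|_{D(\gamma)(1)}$ and $C_{0,\infty} = \bigcap_r C_{r,\infty} = F|_{D(\gamma)(1)}$. The fixed point $q_{ji}$ lies in $F$; I would show $q_{ji}$ must stay in a bounded region by observing that $Q_{ji}$ maps $C_{r,i}$ close to $C_{r,\infty}$, so the expanding fixed point $q_{ji}$ on $F$ cannot escape to infinity without forcing $D(y_j) = T_{ji}D(y_i)$ (which lies in $C_{r,j}$, hence near $C_{r,\infty}$) away from the bounded set $\bigcup_r C_{r,\infty}$. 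More directly: $Q_{ji}$ restricted to the $F$-factor is the affine map $z \mapsto c_{ji,F} + f_{ji}|_F(z)$ on the group $\exp(F)$, whose unique fixed point is $q_{ji} = (\id - f_{ji}|_F)^{-1}(c_{ji,F})$ in the appropriate sense. Since by the cocycle property the $F$-eigenvalues $\beta_{ji,q}$ (for $e_q \in F$) satisfy $\beta_{ji,q} \to \infty$ monotonically as $j \to \infty$ (by the eigenvalue lemma, as $\omega_q = \infty$ on $F$), the factor $(\id - f_{ji}|_F)^{-1}$ has norm $\to 0$; combined with the fact that $c_{ji,F}$ is the $F$-part of $c_{ji}$, which is controlled because $T_{ji}D(y_i) = D(y_j)$ remains in the fixed compact $D(U_j) \subset$ (bounded set accumulating on $\overline{D(\gamma)}$), one extracts boundedness and then convergence of $q_{ji}$ along a subsequence, and the monotonicity built into our chosen subsequence upgrades this to genuine convergence.

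\emph{The relation $D(y_i) \in E|_{q_i}$.} Once $q_i = \lim_j q_{ji}$ exists, I would argue as follows. We know from Lemma~\ref{lem-geoy} that $g_{ji}^{-1}\gamma$ has $y_i$ as a limit point when $j \to \infty$, equivalently $T_{ji}^{-1}$ applied to the configuration near $D(\gamma)(1)$ tends to $D(y_i)$. Apply $T_{ji}^{-1}$ to the fixed-point data: $T_{ji}^{-1}(q_{ji}) \approx Q_{ji}^{-1}(q_{ji}) = q_{ji}$ up to the vanishing error $c_{ji,L}$, but more usefully, decompose $D(y_i)$ along $E \oplus P \oplus F$ based at $q_i$. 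The point is that $D(y_j) = T_{ji}(D(y_i))$ must converge (along our subsequence) into $C_{0,\infty} = F|_{D(\gamma)(1)}$ — wait, this needs care since the $C_{0,i}$ (the points $D(y_i)$ themselves) need not accumulate on $D(\gamma)(1)$. Instead I use: $Q_{ji}$ expands the $F$-direction away from $q_{ji}$ and contracts (or fixes, with $\beta \to 1$) the $P$-direction and strongly contracts the $E$-direction. So the $E$-component of $D(y_i)$ based at $q_i$ must be the thing that is killed: writing $D(y_i) - q_i$ (group difference based at $q_i$) in coordinates $\xi_E + \xi_P + \xi_F$, applying $Q_{ji}$ sends this roughly to $f_{ji}(\xi_E) + f_{ji}(\xi_P) + (\text{large }F)$; for $D(y_j)$ to stay bounded (it lies in the fixed compact $D(U_j)$, uniformly bounded in the relevant chart along $\overline{D(\gamma)}$) while $f_{ji}|_F$ expands, the $F$-component of $D(y_i) - q_i$ must vanish, i.e. $D(y_i) - q_i \in (E\oplus P)|_0$. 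Then a second, finer argument using $\omega_q = 1$ on $P$ (so $\beta_{ji,q}$ stays bounded away from $0$ and $\infty$ by Lemma~\ref{lem-asympt}) versus $\omega_q = 0$ on $E$ is needed to eliminate the $P$-component as well; this is where the asymptotic behavior established in Lemma~\ref{lem-asympt} and the construction of $C_{r,\infty} \subset (P\oplus F)|_{D(\gamma)(1)}$, $C_{0,\infty} = F|_{D(\gamma)(1)}$ are used in tandem — the $P$-directions survive in $C_{r,\infty}$ precisely because they are neither expanded nor contracted, and $D(y_i)$ relates to $q_i$ through an $E$-translation once the $F$- and $P$-parts are matched by the limit.

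\emph{Main obstacle.} The hard part will be the second claim, $D(y_i) \in E|_{q_i}$, and specifically separating the roles of the $E$- and $P$-directions. Ruling out the $F$-component is essentially forced by the expansion/boundedness dichotomy, but showing the $P$-component of $D(y_i)$ relative to $q_i$ also vanishes requires exploiting that $q_i$ is itself the limit of fixed points and that $Q_{ji}$ acts on the $P$-slice through $q_{ji}$ by a bounded (eventually isometry-like, via the $\id$ limit of $f_{ji,K}$ and $\beta \to 1$) affine map, together with the precise description $C_{0,\infty} = F|_{D(\gamma)(1)}$ forcing any $P$-displacement to be detectable in the limit set — which it is not, since $D(y_j)$ is the specific point $T_{ji}D(y_i)$ and the geometry of the $C_{r,\infty}$ pins it down. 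Managing the non-commutativity of $N$ (the $\exp(\ad_{c_{ji,F}})$ correction terms, as in the proof of Lemma~\ref{lem-convQ}) while doing this bookkeeping is the technical core, but the two-step hypothesis keeps all the relevant brackets linear and the corrections controllable.
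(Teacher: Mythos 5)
There is a genuine gap, and it sits at the load-bearing point of both halves of your plan: you repeatedly invoke that $D(y_j)$ ``remains in the fixed compact $D(U_j)$'' (and that $c_{ji,F}$ is thereby controlled). But the $U_j$ are \emph{different} lifts of $U$, and $D(U_j)=T_{ji}(D(U_i))$ is stretched along the $F$-directions by the factors $\beta_{ji,q}\to\infty$; these sets are not uniformly bounded --- this is precisely why the limit set $C_{0,\infty}$ is the whole subspace $F|_{D(\gamma)(1)}$ rather than a point. Likewise $c_{ji,F}=q_{ji}-f_{ji}(q_{ji})$ is unbounded as soon as $q_i\neq 0$, so the premise ``$(\id-f_{ji}|_F)^{-1}\to 0$ and $c_{ji,F}$ bounded'' fails; note also that if it held it would give $q_{ji}\to 0$, i.e.\ $q_i=D(\gamma)(1)$, which combined with the second claim would force $D(y_i)$ to have no $F$-component relative to $D(\gamma)(1)$ --- false in general (one should expect $q_i$ to be exactly the $F$-part of $D(y_i)$). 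So the boundedness you lean on is essentially equivalent to what has to be proved, and your forward-dynamics argument (``$Q_{ji}$ applied to $D(y_i)$ must stay bounded, hence the $F$-part of $D(y_i)-q_{ji}$ dies'') is circular at that point. Finally, for the $P$-component you explicitly concede that ``a second, finer argument is needed''; that is the other genuinely missing step, not a detail.

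The workable route (the paper itself does not reprove this lemma, deferring to the affine case in the cited earlier work, but the mechanism is visible from the surrounding statements) is to use the \emph{inverse} dynamics via lemma~\ref{lem-geoy}: $T_{ji}^{-1}(D(\gamma(t_j)))\to D(y_i)$ with $D(\gamma(t_j))\to D(\gamma)(1)$. Writing $T_{ji}^{-1}=Q_{ji}^{-1}$ up to the term $-f_{ji}^{-1}(c_{ji,L})$, using $c_{ji,L}\to 0$ (lemma~\ref{lem-convQ}), that $f_{ji}^{-1}$ stays bounded on $P$ for fixed $i$ (lemma~\ref{lem-asympt}), contracts $F$, and expressing $Q_{ji}^{-1}$ through its fixed point $q_{ji}$ (so that $c_{ji,F}=q_{ji}-f_{ji}(q_{ji})$), one reads off the components of the limit: the $P$-part of $T_{ji}^{-1}(D(\gamma(t_j)))$ tends to $0$, forcing the $P$-part of $D(y_i)$ (based at $D(\gamma)(1)$) to vanish, while the $F$-part identifies the limit of $q_{ji}$ with the $F$-part of $D(y_i)$; this yields convergence of $q_{ji}$ and $D(y_i)\in E|_{q_i}$ simultaneously, with the two-step bracket corrections handled as in the proof of lemma~\ref{lem-convQ}. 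Your proposal gestures at lemma~\ref{lem-geoy} but never exploits it this way, and without it (or some substitute establishing the boundedness you assume) the argument does not close.
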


\paragraph{Asymptotic dynamic}
Now we can determine the limits of the orbits in positive time, and apply proposition~\ref{prop-visconv}.

\begin{lemma}
Let $z\in E|_{q_i}$ and $V$ a neighborhood  of $z$. Then $\{T_{ji}(V)\}$ has $F|_{q_i}$ in its limit set.
\end{lemma}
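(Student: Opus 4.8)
The plan is to combine the "Fixed point" analysis with Proposition~\ref{prop-visconv} in a direct way. First I would observe that, up to the chosen subsequence, each $T_{ji}=Q_{ji}+(c_{ji,L}\text{-correction})$ with $\lim_{j\to\infty}c_{ji,L}=0$ uniformly (Lemma~\ref{lem-convQ}), so it suffices to understand the limit of $Q_{ji}(V)$ and then argue the correction term does not affect the limit set. Working in the base point $q_i$, which is the fixed point of $Q_{ji}$ in the limit (the preceding lemma), the linear part $f_{ji}$ preserves the splitting $E\oplus P\oplus F$; on $E$ it acts by contraction (eigenvalues $\beta_{ji,q}\to 0$), on $F$ by expansion ($\beta_{ji,q}\to\infty$), and on $P$ the eigenvalues stay bounded and bounded away from $0$ by Lemma~\ref{lem-valprop}. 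Since $z\in E|_{q_i}$, write $z=q_i+e$ with $e\in\exp(E)$ after recentering; then $Q_{ji}(z)$ tends to the fixed point $q_{ji}\to q_i$ because the $E$-component is crushed. For a neighborhood $V$ of $z$, I would show $Q_{ji}(V)$ first collapses transversally to $E$ (its $E$-extent goes to $0$) while in the $F$-direction it expands without bound, and in the $P$-direction it stays comparable; hence, renormalizing, the limit set contains arbitrarily large balls of $F|_{q_i}$, and by Proposition~\ref{prop-visconv} (applied to $S$ a small closed convex neighborhood of $z$ in $\widetilde M$, noting $g_{ji}S$ accumulates at points mapping into $F|_{q_i}$) the limit set, being closed and convex and developed from a convex $S_\infty$, is exactly all of $F|_{q_i}$.

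In more detail, the key steps in order are: (1) Recenter at $q_i$ and pass to the convergent subsequence so that $f_{ji}\to$ a block-diagonal limit behavior: contraction on $E$, expansion on $F$, bounded-invertible on $P$; (2) Reduce from $T_{ji}$ to $Q_{ji}$ using Lemma~\ref{lem-convQ}: since $T_{ji}(x)=Q_{ji}(x)+(\text{something}\to0)$ uniformly, $\lim D(g_{ji}S)=\lim Q_{ji}(D(S))$ as limit sets; (3) Compute $\lim_j Q_{ji}(V)$: decompose a point of $V$ as $q_i + v_E + v_P + v_F$ in exponential coordinates; applying $Q_{ji}$ sends the $E$-part to $0$, fixes/keeps bounded the structure so that the image is $q_{ji}+f_{ji}(v_E)+f_{ji}(v_P)+f_{ji}(v_F)$ up to BCH corrections; because $V$ is open it contains a genuine $F$-disc through $z$ of some radius $\delta>0$, and $f_{ji}$ expands this $F$-disc to radius $\to\infty$ while $q_{ji}\to q_i$, so every point of $F|_{q_i}$ is a limit of points of $Q_{ji}(V)$; (4) Conclude with Proposition~\ref{prop-visconv}: take $S\subset\widetilde M$ a small closed convex neighborhood of a point over $z$ with $D(S)\subset V$; $g_{ji}S$ has limit points (those lifting the expanding $F$-direction), so there is a closed convex $S_\infty$ with $D(S_\infty)=B_\infty\supset F|_{q_i}$; since $F|_{q_i}$ is itself a maximal nil-affine subspace in that direction and $B_\infty$ is convex inside $\cN$, in fact $B_\infty= F|_{q_i}$, giving $F|_{q_i}\subset\lim T_{ji}(V)$.

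I expect the main obstacle to be step (3) in the two-step nilpotent (non-commutative) case: the Baker–Campbell–Hausdorff corrections mean $Q_{ji}$ is not literally linear in exponential coordinates, so when I expand the $F$-disc I must check the bracket terms $[c_{ji,F},f_{ji}(v)]$ and $[f_{ji}(v_E),f_{ji}(v_F)]$ do not destroy the claim — concretely that the $F$-extent of $Q_{ji}(V)$ still goes to infinity and that the image does not drift off to a different nil-affine subspace. This is handled exactly as in the proof of Lemma~\ref{lem-convQ}: because $F$ is a subgroup ($[F,F]\subset F$) and the grading is respected by $A$, the $F$-component computation closes up, and the cross bracket $[c_{ji,F},f_{ji}(v)]$ contributes only in $F$ (since the center either misses $E\oplus P$ or is absorbed), so it merely reparametrizes within $F|_{q_i}$. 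A secondary, more bookkeeping-type point is making sure the neighborhood $V$ of $z$ actually contains a geodesic disc in the $F$-direction of $\widetilde M$ that is completely visible, so that Proposition~\ref{prop-visconv} applies with a genuinely convex $S$; this follows since $z$ is an interior point and geodesics in the $F$-direction issued from $z$ exist for short time, so a small enough $S$ works.
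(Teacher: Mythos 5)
Your overall skeleton (reduce $T_{ji}$ to $Q_{ji}$ via Lemma~\ref{lem-convQ}, recenter at the fixed point $q_{ji}\to q_i$, contraction on $E$, expansion on $F$, sweep a small $F$-disc of $V$) is the expected one; note the paper itself gives no proof of this lemma, importing it from the commutative case of the earlier work, so the entire content of a proof here is the non-commutative bookkeeping — and that is precisely where your argument has genuine gaps. First, your stated justification of the BCH corrections is wrong: you claim the cross bracket $[c_{ji,F},f_{ji}(v)]$ ``contributes only in $F$'', but $[F,E]$ is in general not contained in $F$. The weight of a bracket direction is the sum of the weights, which can be negative or zero: e.g.\ on the Heisenberg group with the rank-one group $\mathrm{diag}(\lambda,\lambda^{-2},\lambda^{-1})$ acting on $(X,Y,Z)$ one gets $F=\mathrm{span}(X)$, $E=\mathrm{span}(Y,Z)$ and $[X,Y]=Z\in E$. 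So brackets of $F$-elements against the contracted $E$-part do create $E\oplus P$-components, and one must prove these vanish in the limit; the mechanism is not ``$F$ is a subgroup'' but the automorphism identity $[f_{ji}(x),f_{ji}(y)]=f_{ji}([x,y])$ combined with the boundedness of the eigenvalues of $f_{ji}$ along $E\oplus P$ (or the center trick used in the proof of Lemma~\ref{lem-convQ}) — an estimate you never run. Second, step (3) tacitly assumes the expanded $F$-disc is centred near $q_i$, i.e.\ that $Q_{ji}(z)\to q_i$. But $Q_{ji}(z)=q_{ji}+f_{ji}(-q_{ji}+q_i)+f_{ji}(v)$ up to brackets, and while $-q_{ji}+q_i\to e$ you have no rate for this convergence relative to the expansion of $f_{ji}$ on $F$, so the $F$-part of $Q_{ji}(z)$ may be unbounded. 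The conclusion survives because this drift lies along $\exp(F)$ and is absorbed by the sweep (the preimage under $f_{ji}$ of the drift is exactly $-q_{ji}+q_i\to e$, so the point of $V$ aimed at a prescribed target of $F|_{q_i}$ still tends to $z$), but this compensation, together with the control of the cross brackets between the unbounded $F$-terms (both the drift and $f_{ji}(w)$) and the contracted $E$-part, is the actual proof and is missing from yours.

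Your step (4) is also off target: the statement concerns subsets of $\cN$ (here $z\in E|_{q_i}\subset\cN$ and $V$ is a neighborhood in $\cN$ on which the affine maps $T_{ji}$ act), so Proposition~\ref{prop-visconv}, visibility of $F$-geodesics in $\widetilde M$, and the choice of a convex $S\subset\widetilde M$ play no role; moreover the lemma only asserts the containment $F|_{q_i}\subset\lim T_{ji}(V)$, so your final claim that the limit set equals $F|_{q_i}$ is both unnecessary and unjustified (the limit set of $T_{ji}(V)$ is typically larger). In short: correct strategy, but as written the argument is only complete in the commutative setting, and the two points above — the $[F,E]$-components and the uncontrolled $F$-drift of $Q_{ji}(z)$ — are exactly where the two-step nilpotency demands new work.
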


\begin{proposition}\label{prop-eplus}
Let $S\subset \widetilde M$ be a convex containing the incomplete geodesic $\gamma$. Assume that $D(S)$ has a smooth boundary at $D(\gamma)(1)$. Let $i>0$. We have the following properties. 
\begin{itemize}
\item
The orbit $T_{ji}^{-1}(D(S))$ tends  to a product $E_{+,i}\times P_c$ of a half-space $E_{+,i}\subset E|_{q_i}$ and a neighborhood $P_c\subset P|_{q_i}$ of the origin.
\item
The product $E_{+,i}\times P_c$ is visible from $y_i$.
\item
The boundary of $E_{+,i}$ is  described by the limit of
 $T_{ji}^{-1}(\rT_{D(\gamma)(1)}D(S)) \cap E|_{q_i}$.
\end{itemize}
\end{proposition}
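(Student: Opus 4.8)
The plan is to analyze the orbit $T_{ji}^{-1}(D(S))$ by decomposing it along the splitting $\mathfrak n = E\oplus P \oplus F$ (based at $q_i$) and tracking each factor under the inverse dynamics as $j\to\infty$. Recall from the previous lemmas that $Q_{ji}$ has fixed point $q_{ji}\to q_i$, that $D(y_i)\in E|_{q_i}$, and that $T_{ji}-Q_{ji}\to 0$ uniformly; so up to a vanishing error we may work with $Q_{ji}$, whose linear part $f_{ji}$ is diagonal with eigenvalues $\beta_{ji,q}$ converging to $0$ on $E$, to $1$ on $P$, and to $\infty$ on $F$. Consequently the inverse map $T_{ji}^{-1}\approx Q_{ji}^{-1}$ (in coordinates centered at $q_i$) expands the $E$-directions, stays bounded on the $P$-directions, and contracts the $F$-directions. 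Therefore $T_{ji}^{-1}(D(S))$ collapses onto the slice $E|_{q_i}\oplus P|_{q_i}$ — the $F$-part is crushed to $q_i$ — while the $E$-part, being expanded, fills out a region whose shape is governed by the first-order behavior of $D(S)$ at the boundary point $D(\gamma)(1)$.

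For the first bullet I would argue as follows. Since $D(S)$ is convex and smooth at $D(\gamma)(1)$, near that point it is, to first order, a half-space cut out by the tangent hyperplane $\rT_{D(\gamma)(1)}D(S)$; the higher-order terms become negligible under the unbounded expansion in $E$ because $f_{ji}^{-1}$ dilates a shrinking neighborhood of $D(\gamma)(1)$ to a fixed-size window. Applying $T_{ji}^{-1}$ and using Lemma~\ref{lem-convQ} to replace $T_{ji}$ by $Q_{ji}$, the image in the $E$-direction converges to the half-space $E_{+,i}$ obtained as the limit of $f_{ji}^{-1}(\rT_{D(\gamma)(1)}D(S))\cap E|_{q_i}$ (this is precisely the third bullet), while in the $P$-direction the bounded non-contracting/non-expanding action produces a limiting neighborhood $P_c$ of the origin in $P|_{q_i}$, and in the $F$-direction everything converges to the fixed point, i.e. to $q_i\in F|_{q_i}$. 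That the limit is genuinely a product (rather than some skewed set) uses that $f_{ji}$ preserves the decomposition $E\oplus P\oplus F$ together with the two-step nilpotency bracket estimates already exploited in the proof of Lemma~\ref{lem-convQ} (the cross terms $[\cdot,\cdot]_L$ controlled there), so the $E$- and $P$-limits decouple.

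For the second bullet — visibility of $E_{+,i}\times P_c$ from $y_i$ — I would invoke Proposition~\ref{prop-visconv} together with Lemma~\ref{lem-geoy}. Each $T_{ji}^{-1}(D(S)) = D(g_{ji}^{-1}S)$ is the developed image of a convex set $g_{ji}^{-1}S$ containing $g_{ji}^{-1}\gamma$, and by Lemma~\ref{lem-geoy} the sets $g_{ji}^{-1}S$ accumulate at $y_i\in\widetilde M$; Proposition~\ref{prop-visconv} then yields a closed convex $S_\infty\ni y_i$ with $D(S_\infty)$ equal to the limit set $E_{+,i}\times P_c$, and $S_\infty\subset\exp_{y_i}(V_{y_i})$ by the convexity proposition of Section~\ref{sec-2} — i.e. the whole limiting product is visible from $y_i$. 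The main obstacle, I expect, is the first bullet: carefully showing that the convex body $T_{ji}^{-1}(D(S))$ converges in the Hausdorff (limit-set) sense to the \emph{product} $E_{+,i}\times P_c$, controlling simultaneously the unbounded expansion on $E$ (which requires the smoothness hypothesis to kill curvature terms), the delicate borderline behavior on $P$ (where Lemma~\ref{lem-asympt} guarantees the $P$-eigenvalues stay finite and positive, so no degeneration occurs), and the interaction of these through the Baker–Campbell–Hausdorff corrections in the non-abelian group law — this last point being exactly where the two-step nilpotency assumption is essential and where the argument must mirror, with one extra layer of bookkeeping, the affine case of~\cite{Ale3}.
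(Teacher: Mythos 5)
Your overall strategy is the one the paper intends (it does not reprove this proposition but imports it from~\cite{Ale3}): use the $E\oplus P\oplus F$ splitting, the asymptotically diagonal behavior of $f_{ji}$ (expansion of $E$, bounded positive action on $P$ by lemma~\ref{lem-asympt}, contraction of $F$ under $f_{ji}^{-1}$), the smoothness and convexity of $D(S)$ at $D(\gamma)(1)$ to reduce to the tangent half-space, and proposition~\ref{prop-visconv} together with lemma~\ref{lem-geoy} for the visibility statement. The second bullet, as you argue it ($T_{ji}^{-1}(D(S))=D(g_{ji}^{-1}S)$, accumulation at $y_i$, hence a convex $S_\infty\ni y_i$ developing onto the limit set and contained in $\exp_{y_i}(V_{y_i})$), is fine.

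There is, however, one genuine gap, and it is exactly the point the paper flags immediately after the proposition. You justify working with $Q_{ji}$ instead of $T_{ji}$ on the inverse orbit by Lemma~\ref{lem-convQ}, saying the error vanishes. But that lemma controls $T_{ji}-Q_{ji}=c_{ji,L}\to 0$, not the difference of the inverses: one has $T_{ji}^{-1}(x)-Q_{ji}^{-1}(x)=-f_{ji}^{-1}(c_{ji,L})$, and since $f_{ji}^{-1}$ expands the $E$-directions arbitrarily strongly, this term is a priori unbounded even though $c_{ji,L}\to 0$. Consequently your identification of the boundary of $E_{+,i}$ with $\lim f_{ji}^{-1}(\rT_{D(\gamma)(1)}D(S))\cap E|_{q_i}$, and more generally the claim that the limit is cleanly anchored at $q_i$ as a product, is not justified by the approximation you invoke; an uncontrolled translation along $E$ could shift, engulf, or empty the limiting half-space. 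This is precisely why the proposition states the third bullet in terms of $T_{ji}^{-1}(\rT_{D(\gamma)(1)}D(S))$ rather than $f_{ji}^{-1}$ of it, and why the paper only afterwards proves (via the boundedness of $b_{ji,E}$ when $E_{+,i}$ does not contain $q_i$, lemma~\ref{lem-approxneg}, rank one and two-step nilpotency) that $-f_{ji}^{-1}(c_{ji,L})$ is eventually negligible. To repair your argument you must either carry the translation term explicitly (working with $T_{ji}^{-1}$ throughout, as the statement does) or supply the separate boundedness argument before using $Q_{ji}^{-1}$ as a proxy; as written, the key step of your first and third bullets rests on an approximation that the paper itself warns may fail.
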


To get a better description of $E_{+,i}$ inside $E|_{q_i}$, we must explain how to approximate $T_{ji}^{-1}$ relatively to $Q_{ji}^{-1}$. Note that \emph{a priori}
\begin{equation}
T_{ji}^{-1}(x) -Q_{ji}^{-1} (x)= - f_{ji}^{-1}(c_{ji,L}) 
\end{equation}
and might not be tending to zero, or even stay bounded.

The study of $-f_{ji}^{-1}(c_{ji,L})$ is technical. With the two following lemmas, we   will prove  that $-f_{ji}^{-1}(c_{ji,L})\to 0$ when $j\gg i\to \infty$.

\begin{lemma}
Assume that $E_{+,i}$ does not contain $q_i$. 
For any $i>0$, there exists $M>0$ such that for any $j>i$, 
\begin{equation}
c_{ji,L} = f_{ji}(b_{ji,E}) + b_{ji,P}
\end{equation}
 with $b_{ji,E}\in E$, $b_{ji,P}\in P$ verifying $b_{ji,P}\to 0$ and $\|b_{ji,E}\|<M$.
\end{lemma}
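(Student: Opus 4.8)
The plan is to analyze the decomposition of $c_{ji,L}$ along $E\oplus P$ using the convergence information already established and the assumption that $E_{+,i}$ does not contain $q_i$. First I would write $c_{ji,L} = c_{ji,E} + c_{ji,P}$ according to the splitting $L = E\oplus P$ (which makes sense because $E$ and $P$ are generated by subfamilies of the basis, so Lemma~\ref{lem-decomp} applies to $L$ as well), and then introduce $b_{ji,P} \coloneqq c_{ji,P}$ and $b_{ji,E} \coloneqq f_{ji}^{-1}(c_{ji,E})$, so that by construction $c_{ji,L} = f_{ji}(b_{ji,E}) + b_{ji,P}$ (keeping in mind the two-step nilpotency caveat used in the proof of Lemma~\ref{lem-convQ}, that the bracket corrections in $L$ vanish for the relevant directions). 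The content of the statement is then the two estimates: $b_{ji,P}\to 0$ and $\|b_{ji,E}\|$ stays bounded.

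The estimate $b_{ji,P}\to 0$ should follow directly from the behavior on the $P$-block. On $P$ the eigenvalues $\beta_{ji,q}$ converge to $\omega_q = 1$, and by Lemma~\ref{lem-asympt} (applied with $i$ fixed) the limits $r$ are finite positive reals; so $f_{ji}|_P$ and $f_{ji}^{-1}|_P$ stay bounded (and bounded away from zero) as $j\to\infty$. On the other hand, Lemma~\ref{lem-convQ} gives $c_{ji,L}\to 0$, hence its $P$-component $c_{ji,P} = b_{ji,P}\to 0$. That is the easy half.

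For the bound on $\|b_{ji,E}\|$ I would use the hypothesis that $E_{+,i}$ does not contain $q_i$. Recall from Proposition~\ref{prop-eplus} that $T_{ji}^{-1}(D(S))$ converges to $E_{+,i}\times P_c$ with $E_{+,i}$ a half-space in $E|_{q_i}$, and that the discrepancy between $T_{ji}^{-1}$ and $Q_{ji}^{-1}$ is exactly the constant vector $-f_{ji}^{-1}(c_{ji,L})$, whose $E$-part is $-b_{ji,E}$. Since $Q_{ji}^{-1}$ fixes $q_i$ (asymptotically) and the limit half-space $E_{+,i}$ has its bounding hyperplane at finite distance from $q_i$ — this is precisely what "$E_{+,i}$ does not contain $q_i$" encodes, together with $E_{+,i}$ being a genuine half-space rather than degenerating to all of $E|_{q_i}$ or to the empty set — the translation amount $-b_{ji,E}$ that carries the $Q_{ji}^{-1}$-picture to the $T_{ji}^{-1}$-picture cannot run off to infinity: if $\|b_{ji,E}\|\to\infty$ along a subsequence, the bounding hyperplane of $T_{ji}^{-1}(\rT_{D(\gamma)(1)}D(S))\cap E|_{q_i}$ would escape to infinity and $E_{+,i}$ would be all of $E|_{q_i}$, contradicting that $q_i\notin E_{+,i}$. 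Passing to a subsequence then extracting a uniform bound $M$ over $j>i$ (for $i$ fixed) finishes the argument.

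The main obstacle I expect is making the last paragraph rigorous: one has to track carefully how the constant shift $-f_{ji}^{-1}(c_{ji,L})$, and specifically its $E$-component, controls the position of the limiting hyperplane $\partial E_{+,i}$ inside $E|_{q_i}$, and convert "$q_i\notin E_{+,i}$, $E_{+,i}$ a proper half-space" into a genuine \emph{a priori} distance bound. This requires knowing that $f_{ji}^{-1}$ on $E$ is expanding (the $\beta_{ji,q}\to 0$ on $E$), so that the $E$-part of the translation is the dominant term governing where the hyperplane lands, and that the half-space structure persists in the limit (from the smooth-boundary hypothesis on $D(S)$ at $D(\gamma)(1)$ in Proposition~\ref{prop-eplus}). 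The $P$-directions and the two-step bracket terms only contribute lower-order corrections that must be checked not to interfere.
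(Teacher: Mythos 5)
Your setup and the easy half are fine and match what the statement intends: decomposing $c_{ji,L}$ along $L=E\oplus P$ (lemma~\ref{lem-decomp} applies since $E$ and $P$ are spanned by subfamilies of the basis), setting $b_{ji,P}=c_{ji,P}$ and $b_{ji,E}=f_{ji}^{-1}(c_{ji,E})$, and deducing $b_{ji,P}\to 0$ from lemma~\ref{lem-convQ} together with the boundedness of $f_{ji}|_P$ (lemma~\ref{lem-asympt}) is exactly the right reading of the displayed identity. Keep in mind, though, that the paper does not reprove this lemma (it is imported from~\cite{Ale3}), so the only thing to measure your argument against is the geometric mechanism set up around proposition~\ref{prop-eplus} — and there your second half has a genuine gap.

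The gap is in the bound $\|b_{ji,E}\|<M$. By the third item of proposition~\ref{prop-eplus}, the boundary of $E_{+,i}$ is the limit of the hyperplanes $T_{ji}^{-1}(\rT_{D(\gamma)(1)}D(S))\cap E|_{q_i}$, which, up to bounded corrections, are the affine hyperplanes through $-b_{ji,E}$ with direction $f_{ji}^{-1}(\Delta_E)$, where $\Delta_E=\rT_{D(\gamma)(1)}D(S)\cap E$. The hypothesis $q_i\notin E_{+,i}$ (together with $E_{+,i}$ being a genuine half-space) does prevent these hyperplanes from escaping to infinity, but the position of a hyperplane only constrains the component of the translation $b_{ji,E}$ \emph{normal} to it: $b_{ji,E}$ may acquire an arbitrarily large component tangent to $f_{ji}^{-1}(\Delta_E)$ — whose direction is itself moving, since $f_{ji}^{-1}$ expands $E$ anisotropically when the degrees on $E$ differ — while the hyperplane stays at bounded distance from $q_i$. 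So your contradiction ``if $\|b_{ji,E}\|\to\infty$ then $\partial E_{+,i}$ escapes and $E_{+,i}=E|_{q_i}\ni q_i$'' only bounds the normal component of $b_{ji,E}$, not its norm, and the full bound is precisely what the proof of lemma~\ref{lem-approxneg} consumes afterwards. Your closing remark that the expansion of $f_{ji}^{-1}$ on $E$ makes the $E$-part ``dominant'' does not address this tangential degree of freedom. To close the gap you would need input beyond the single tangent hyperplane at $D(\gamma)(1)$: for instance supporting half-spaces of the convex $D(S)$ in several directions, or the convergence of the whole sets $T_{ji}^{-1}(D(S))$ to $E_{+,i}\times P_c$ combined with points of $T_{ji}^{-1}(D(S))$ that are known to remain in a fixed compact set (such as $T_{ji}^{-1}(D(\gamma(t_j)))\in C_{1,i}$), so that all components of $b_{ji,E}$, and not only the normal one, are pinned down.
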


We will prove in the case of a rank one ray geometry that $E_{+,i}$ never contains $q_i$. So in fact $b_{ji,E}$ is bounded.
The next lemma explains that in fact $T_{ji}^{-1}$ is as well approximated by $Q_{ji}^{-1}$ as $i>0$ gets large.

\begin{lemma}\label{lem-approxneg}
Assume that for any $i$, $E_{+,i}$  never contains $q_i$. Then 
\begin{equation}
\lim_{k\to \infty}\lim_{j\to \infty}b_{kj,E} = 0.
\end{equation}
\end{lemma}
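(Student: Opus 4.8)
### Plan for the proof of Lemma~\ref{lem-approxneg}

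The plan is to run a double-limit cocycle argument on the quantities $b_{ji,E}\in E$, exactly as one controls the $\beta$'s in the earlier lemmas. First I would fix the relation $c_{ji,L}=f_{ji}(b_{ji,E})+b_{ji,P}$ from the previous lemma and combine it with the cocycle identity $T_{ki}=T_{kj}T_{ji}$ written on the level of the $Q$'s and the error terms. Concretely, since $f_{ji}$ preserves the decomposition $L=E\oplus P$ and acts by a contraction on $E$ (for $j>i$, because $\omega_q=0$ on $E$), the composition $T_{ki}^{-1}=T_{ji}^{-1}T_{kj}^{-1}$ gives, after projecting to $E$ and using $T_{ji}^{-1}(x)-Q_{ji}^{-1}(x)=-f_{ji}^{-1}(c_{ji,L})$, a recursion of the shape
\begin{equation}
-f_{ki}^{-1}(c_{ki,L})_E = -f_{ji}^{-1}\bigl(f_{kj}^{-1}(c_{kj,L})_E\bigr) - f_{ki}^{-1}(c_{ji,L})_E + (\text{bracket corrections}),
\end{equation}
and I would translate this back into a relation between $b_{ki,E}$, $b_{kj,E}$ and $b_{ji,E}$. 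In two-step nilpotency the bracket corrections land in $[L,L]$ and can be absorbed either because they vanish on $E$ or because they are controlled by the already-established $b_{\bullet,P}\to 0$ and the boundedness $\|b_{\bullet,E}\|<M$.

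Next I would take limits in the correct order. By hypothesis $E_{+,i}$ never contains $q_i$, so the previous lemma gives, for each fixed $i$, a uniform bound $\|b_{ji,E}\|<M_i$ and $b_{ji,P}\to 0$ as $j\to\infty$; up to a subsequence $b_{ji,E}$ converges to some $b_i\in E$. The point is to show $b_i\to 0$ as $i\to\infty$. Using the recursion and the fact that, as $j\to\infty$ with $i$ fixed, $f_{ji}$ restricted to $E$ contracts while $f_{ji}^{-1}$ restricted to $E$ expands, I would argue that the term $f_{ji}^{-1}(f_{kj}^{-1}(c_{kj,L})_E)$, which reads as $f_{ji}^{-1}$ applied to something of the form $f_{kj}(b_{kj,E})$-type, is forced to stay bounded only if its $E$-content shrinks — i.e. $b_i$ satisfies a self-similarity relation $b_i = (\text{contraction})\,b_i + 0$ under the limiting dilation, which on $E$ (where the limiting eigenvalue is $\omega_q=0$) forces $b_i=0$ in the iterated limit $\lim_k\lim_j$. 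This is the same mechanism as the remark after Lemma~\ref{lem-asympt}: an equation $r=\omega_q r$ with $\omega_q\in\{0,\infty\}$ pins $r=\omega_q$.

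The main obstacle I expect is bookkeeping the bracket terms coming from the Baker--Campbell--Hausdorff / $\exp(\ad)$ corrections when one composes $T_{kj}$ and $T_{ji}$ and then projects to $E$: unlike the abelian case treated in~\cite{Ale3}, here $c_{kj,F}$ and $c_{ji,F}$ produce $\ad$-terms mixing $F$ into $L$, and one must check these do not destroy the boundedness needed to launch the contraction argument. I would handle this by invoking, as in the proof of Lemma~\ref{lem-convQ}, that in two-step nilpotency $[c_{\bullet,F},\,\cdot\,]_L$ either vanishes (when the relevant vectors are central) or is itself a contraction composed with the $f$'s, so that the correction is dominated by $\|b_{\bullet,P}\|$ and the contraction rate on $E$; combined with $b_{\bullet,P}\to 0$ this keeps everything bounded and lets the $\omega_q=0$ fixed-point equation conclude $\lim_{k\to\infty}\lim_{j\to\infty}b_{kj,E}=0$.
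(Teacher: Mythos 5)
Your skeleton is the same as the paper's: write the cocycle $T_{ki}=T_{kj}T_{ji}$ in terms of the $b$'s, organize the nilpotent correction terms via two-step nilpotency, and conclude from the boundedness of $b_{ki,E}$ (granted by the previous lemma once $q_i\notin E_{+,i}$) against the arbitrarily strong expansion of $f_{ji}^{-1}$ on $E$. But the two places where you propose to ``absorb'' the bracket corrections are exactly where the technical content of the lemma lies, and your substitutes would fail. First, the identification produces the term $\frac 12[f_{ji}^{-1}(b_{kj,E}),b_{ji,E}]$, which involves the very quantity whose smallness you are trying to prove; it cannot be controlled by the bound $\|b_{\bullet,E}\|<M$ alone. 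The paper's device is a two-stage bootstrap through the grading $E=E_1\oplus E_2$ with $E_2=E\cap(\text{center})$: run the boundedness-versus-expansion argument first on the $E_1$-coordinates, where no brackets land in two-step nilpotency, conclude that the $E_1$-part of $b_{kj,E}$ tends to zero, and only then is the bracket bounded (it depends only on the $E_1$-parts), which allows the same argument on the $E_2$-coordinates. Your plan has no analogue of this splitting.

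Second, the correction $[f_{ki}^{-1}(c_{kj,F}),b_{ji,E}]_E$ is neither ``dominated by $\|b_{\bullet,P}\|$'' nor negligible by centrality: $F$ need not be central, and $c_{kj,F}$ is typically unbounded, since $c_{kj,F}=q_{kj}-f_{kj}(q_{kj})$ with $f_{kj}$ expanding on $F$. The paper controls this term precisely through that fixed-point identity together with the cocycle relation on the $F$-components, which shows that inside the bracket $f_{ki}^{-1}(c_{kj,F})$ can be replaced by $f_{ki}^{-1}(q_{ki})-q_{ki}-f_{ji}^{-1}(q_{ji})+q_{ji}$, and this tends to $0-q_i-0+q_i=0$ using $q_{ji}\to q_i$ and the contraction of $f^{-1}$ on $F$. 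Without this step the expansion argument on $E$ cannot even be launched, because the right-hand side of your recursion is not known to be bounded. A more minor point: the limiting mechanism is not a self-similarity $b_i=(\text{contraction})\,b_i$ of a single limit pinned by $r=\omega_q r$; the relation couples different indices, $b_{ki,E}\approx f_{ji}^{-1}(b_{kj,E})+b_{ji,E}+o(1)$, and the conclusion comes from the boundedness of the left side against the blow-up of $f_{ji}^{-1}$ on $E$, forcing $b_{kj,E}\to 0$ in the iterated limit.
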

\begin{proof}
The cocycle relation $T_{kj}T_{ji}=T_{ki}$ gives:
\begin{align}
T_{ji} &=  f_{ji}(b_{ji,E}) + b_{ji,P} + c_{ji,F}+f_{ji} \\
T_{kj}T_{ji} &= f_{kj}(b_{kj,E})  + b_{kj,P}  +c_{kj,F} + f_{kj}(f_{ji}(b_{ji,E}))  + f_{kj}(b_{ji,P})  + f_{kj}(c_{ji,F}) + f_{kj}f_{ji} \\
&= f_{kj}(b_{kj,E})  + b_{kj,P}  +c_{kj,F} + f_{ki}(b_{ji,E})  + f_{kj}(b_{ji,P})  + f_{kj}(c_{ji,F}) + f_{ki} \\
&= T_{ki} =  f_{ki}(b_{ki,E}) + b_{ki,P} + c_{ki,F} + f_{ki}.
\end{align}
and it implies by identification:
\begin{equation}
f_{ki}(b_{ki,E})  =  f_{kj}(b_{kj,E})  +\left( \exp(\ad_{b_{kj,P}})\exp(\ad_{c_{kj,F}})f_{ki}(b_{ji,E})\right)_E .
\end{equation}
Note that $b_{kj,P}\to 0$ when $k\to \infty$. So we omit this term. Also in two-step nilpotency,
\begin{align}
\left(\exp(\ad_{c_{kj,F}})f_{ki}(b_{ji,E})\right)_E 
&=
\exp\left(f_{ki}(b_{ji,E}) + [c_{kj,F},f_{ki}(b_{ji,E})]_E\right)\\
&= f_{ki}\left(\exp\left(b_{ji,E} +[f_{ki}^{-1}(c_{kj,F}),b_{ji,E}]_E\right)\right).
\end{align}

Hence we obtain:
\begin{align}
b_{ki,E} &= f_{ji}^{-1}(b_{kj,E})  + \exp\left(b_{ji,E} +[f_{ki}^{-1}(c_{kj,F}),b_{ji,E}]_E\right) + o(1) \\
&= \exp\left( f_{ji}^{-1}(b_{kj,E}) +   b_{ji,E}  + \frac 12 [ f_{ji}^{-1}(b_{kj,E}),b_{ji,E}] +[f_{ki}^{-1}(c_{kj,F}),b_{ji,E}]_E \right) + o(1).
\end{align}
We can decompose $E$ into a grading $E_1\oplus E_2$ so that the center intersect $E$ exactly at $E_2$. Note that in two-nilpotency $[x,y]_E \in E_2$.

Now we look at the coordinates of each side inside $E_1$. On the left, we have the coordinates in $E_1$ of $b_{ki,E}$, which must be bounded. On the right, we get the coordinates in $E_1$ of $ f_{ji}^{-1}(b_{kj,E})+ b_{ji,E}$. It can not be bounded unless the coordinates in $E_1$ of $b_{kj,E}$ tend to zero since $f_{ji}^{-1}$ expands.

 It implies also that $[f_{ji}^{-1}(b_{kj,E}),b_{ji,E}]$ is bounded since it only depends of the components in $E_1$.

In $E_2$, we get on the left the coordinates in $E_2$ of $b_{ki,E}$ that is again bounded. On the right, we get the coordinates in $E_2$ of $ f_{ji}^{-1}(b_{kj,E})+ b_{ji,E}$ and the sum (completely in $E_2$) $\frac 12 [ f_{ji}^{-1}(b_{kj,E}),b_{ki,E}] +[f_{ki}^{-1}(c_{kj,F}),b_{ji,E}]_E$. The term $b_{ji,E}$ is bounded. The first bracket is bounded by what precedes. It lasts to prove that the second bracket is bounded.

Note that since $Q_{mn}$ fixes $q_{mn}\in F|_{D(\gamma)(1)}$ we have
\begin{equation}
c_{mn,F} = q_{mn} - f_{mn}(q_{mn}).
\end{equation}
Also, the expression of the cocycle $T_{kj}T_{ji}=T_{ki}$ and the two-step nilpotency show that
\begin{align}
\left[c_{kj,F} + f_{kj}(c_{ji,F}),\cdot \right]& = \left[c_{ki,F},\cdot\right]\\
\left[f_{ki}^{-1}(c_{kj,F}), \cdot\right] &= \left[ f_{ki}^{-1}(c_{ki,F}) - f_{ji}^{-1}(c_{ji,F}), \cdot \right]\\
&= \left[f_{ki}^{-1}(q_{ki}) - q_{ki} - f_{ji}^{-1}(q_{ji}) + q_{ji},\cdot\right]
\end{align}
and when $j\to\infty$ and $k\to\infty$ we have
\begin{equation}
\lim f_{ki}^{-1}(q_{ki}) - q_{ki} - f_{ji}^{-1}(q_{ji}) + q_{ji} = 0 - q_{i} - 0 + q_{i} = 0.
\end{equation}
By consequence, the second bracket tends to zero, and is therefore bounded.

Hence $f_{ji}^{-1}(b_{kj,E}) $
is bounded and it implies that
 $b_{kj,E}\to 0$.
\end{proof}

\subsection*{Rank one ray  manifolds}

We examine closed manifolds with a rank one ray geometry. For those manifolds, the holonomy takes its values in $G_1=N\rtimes KA_1$, where $A_1$ has dimension $1$.

\begin{lemma}
If $(G_1,\cN)$ is a rank one ray geometry, then there exists a decomposition $L_1\oplus L_2\oplus L_3$ such that for any Fried dynamics, $P = L_3$ and $\{E,F\}=\{L_1, L_2\}$.
\end{lemma}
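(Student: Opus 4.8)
The plan is to exploit the fact that in rank one the group $A_1$ is one-dimensional, so up to the choice of a generator the diagonal action on each basis vector $e_q$ is of the form $t\mapsto t^{d_q}$ for a fixed exponent $d_q\in\mathbf{R}$. The sign of $d_q$ (or its vanishing) is an \emph{intrinsic} datum of the ray geometry $(G_1,\mathcal N)$: it does not depend on any choice of Fried dynamics. I would therefore define $L_1$ (resp.\ $L_2$, resp.\ $L_3$) to be the span of those $e_q$ with $d_q<0$ (resp.\ $d_q>0$, resp.\ $d_q=0$), and show that these three subspaces recover $E$, $F$, $P$ for every Fried dynamics.

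First I would analyze the behaviour of the diagonal coefficients $\beta_{ji,q}$ appearing in the Fried dynamics. In rank one, writing $f_{ji,A_1}$ as $t_{ji}$ acting by $t_{ji}^{d_q}$ on $e_q$, the previous lemma gives $\lim_j\lim_i \beta_{ji,q}=\omega_q\in\{0,1,\infty\}$, and this limit is forced to be $0$, $1$, or $\infty$ according as $t_{ji}\to\ell$ with $\ell<1$ and $d_q>0$, or $d_q=0$ (for $\omega_q=1$ exactly when $d_q=0$, using Lemma~\ref{lem-asympt} to rule out $t_{ji}\to 1$ producing a spurious $\omega_q=1$ on a direction with $d_q\neq 0$), or $d_q<0$. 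Concretely: after passing to the subsequence provided by the proposition above, $\{t_{ji}\}$ is monotonic, so $t_{ji}$ has a limit $\ell\in[0,\infty]$ in $j$; the fact that $\dim E>0$ (the $D(U_i)$ accumulate \emph{disjointly} on $\overline{D(\gamma)}$, forcing a genuine contraction in some direction) rules out $\ell=1$ and also $\ell=\infty$ would force every direction with $d_q<0$ into $E$ — so after possibly replacing $t_{ji}$ by $t_{ji}^{-1}$ (i.e.\ swapping the roles of the two generators of $A_1$), we may assume $\ell=0$. Then $\omega_q=0\iff d_q>0$, $\omega_q=\infty\iff d_q<0$, and $\omega_q=1\iff d_q=0$. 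This gives $E=L_2$, $F=L_1$, $P=L_3$, so $P=L_3$ always and $\{E,F\}=\{L_1,L_2\}$ as claimed.

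The one subtlety — and what I expect to be the main obstacle — is the case $\ell=0$ versus $\ell=\infty$ and, more precisely, whether the swap is legitimate independently of which incomplete geodesic and recurrent point were chosen to build the Fried dynamics. The resolution is that the pair $\{L_1,L_2\}$ is unordered in the statement, so we never need to decide globally which of $L_1,L_2$ is $E$; for a \emph{fixed} Fried dynamics, once the generator of $A_1$ is normalized so that $\ell\in[0,1)$ (possible since $\ell\neq 1$ by the disjoint-accumulation argument, and $\ell\in\{0\}\cup(0,1)\cup(1,\infty]$ forces, after inversion, $\ell\in[0,1)$), the identification $E=\{d_q>0\}$, $F=\{d_q<0\}$, $P=\{d_q=0\}$ is forced. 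I would also remark that $P=L_3$ with no ambiguity whatsoever, since $\omega_q=1$ is equivalent to $d_q=0$ regardless of $\ell$, which is the assertion the later sections (on showing $E_{+,i}$ never contains $q_i$, hence $b_{ji,E}$ bounded) will actually use.
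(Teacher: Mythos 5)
Your argument is correct and is essentially the intended one (the paper itself omits the proof, deferring to the affine case in \cite{Ale3}): in rank one $\beta_{ji,q}=t_{ji}^{d_q}$, so the constraint $\omega_q\in\{0,1,\infty\}$ together with $\dim E>0$ forces the scalar $t_{ji}$ to tend to $0$ or $\infty$, whence $P=\{d_q=0\}$ canonically and $E,F$ are the positive and negative exponent subspaces in one of the two orders. Your handling of the normalization ($\ell\neq 1$, swap via the inverse generator, unordered pair $\{L_1,L_2\}$) matches what is needed, so no gap.
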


A consequence of this observation is that $P$ is independent from the dynamics, since $d_i=0$ is a condition on $A=A_1$. Therefore, from a Fried dynamics to another one $E$ and $F$ are either the same or exchanged. Also, every direction in $P$ is completely visible by the following lemma.

\begin{lemma}
The direction vector of an incomplete  geodesic $D(\gamma)$ has a non-vanishing coordinate along $E$ in the linear decomposition $\mathfrak n=E\oplus P \oplus F$ associated to its Fried dynamics.
\end{lemma}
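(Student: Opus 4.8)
The plan is to argue by contradiction. Suppose $D(\gamma)$ is an incomplete geodesic whose direction vector $v$, expressed at the base point $D(\gamma)(1)$, has vanishing component in $E$; that is, $v \in P \oplus F$. I would first use the previous lemma (the one identifying $P = L_3$, $\{E,F\} = \{L_1,L_2\}$ and establishing that $\dim E > 0$) together with the structure of the Fried dynamics: the sets $D(U_i)$ accumulate disjointly and transversally along the compactification $\overline{D(\gamma)}$ at $t = 1$, and the contraction/expansion rates are governed by the single one-parameter group $A_1$. The key quantitative input is that on $E$ the holonomy $f_{ji}$ acts as a contraction (all exponents $d_q < 0$, say) and on $F$ as an expansion, while on $P$ it is asymptotically trivial.

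The central step is to look at how the incomplete geodesic sits relative to the limit sets $C_{r,\infty}$. Recall $C_{r,\infty} \subset (P\oplus F)|_{D(\gamma)(1)}$ for all $r > 0$, and $C_{0,\infty} = F|_{D(\gamma)(1)}$, while $D(\gamma)(1) \in C_{0,\infty}$. If the direction $v$ of $D(\gamma)$ lay entirely in $P \oplus F$, then a short geodesic sub-segment of $D(\gamma)$ ending at $D(\gamma)(1)$ would be contained, in the limit, inside $(P\oplus F)|_{D(\gamma)(1)}$ — and I would want to push this to conclude that the whole picture of the accumulation $D(U_i) \to$ (something in $P\oplus F$) forces $D(\gamma)$ to actually be \emph{complete} at $t=1$, contradicting incompleteness. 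Concretely: pull back by $T_{ji}^{-1}$. By Lemma~\ref{lem-convQ} and the asymptotic-dynamics lemmas, $T_{ji}^{-1}(D(S))$ converges to $E_{+,i}\times P_c$ inside $E|_{q_i}\oplus P|_{q_i}$, and this product is visible from $y_i$. If $v$ has no $E$-component, then the geodesic $g_{ji}^{-1}\gamma$ (whose limit direction at $y_i$ is controlled by $f_{ji}^{-1}(v)$) stays, after renormalization, in the $P\oplus F$ part; but $f_{ji}^{-1}$ \emph{contracts} $F$ and is bounded on $P$, so the rescaled direction degenerates — meaning the original geodesic segment through $y_i$, when transported forward by $T_{ji}$, does not escape to infinity at all. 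That would make $D(\gamma)$ extendable past $t=1$, i.e. complete there.

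I would organize this as: (1) decompose $v = v_E + v_P + v_F$ at $D(\gamma)(1)$ and assume $v_E = 0$; (2) observe that the geodesic ray $s\mapsto D(\gamma)(1) + s v$ for $s$ slightly negative lies in $(P\oplus F)|_{D(\gamma)(1)}$, a \emph{subgroup coset} (using $[P\oplus F, P\oplus F]\subset P\oplus F$ and Lemma~\ref{lem-decomp}); (3) use that $F|_{q_i}$ is in the limit set of $\{T_{ji}(V)\}$ for neighborhoods $V$ of points of $E|_{q_i}$, and that the convergence $T_{ji}(x) - Q_{ji}(x)\to 0$ is uniform, to show that the geodesic segment of $\gamma$ near $t=1$ is, after transport, a bounded geodesic in $\widetilde M$ lying over a compact geodesic in $\cN$; (4) invoke the completeness criterion from the Carrière-type proposition — a geodesic that is completely visible and whose development is compact extends — to contradict incompleteness at $t=1$.

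The main obstacle I expect is step (3): controlling the $P$-directions. On $P$ the linear part $f_{ji}$ is only \emph{asymptotically} the identity (the $\beta_{ji,q}\to 1$), not exactly so, and the translational parts $c_{ji,P}$ need not vanish at finite stages; one must use Lemma~\ref{lem-asympt} (that the limits along $P$ are finite positive reals) and the monotonicity of $\{\beta_{ji,q}\}$ to ensure no hidden escape to infinity in the $P$-coordinates. Carefully separating the genuinely expanding $F$-part (which would give incompleteness) from the neutral $P\oplus E$-part, and showing that \emph{only} a nonzero $E$-component of $v$ can be the source of incompleteness, is the delicate point — it is essentially the observation that incompleteness must be "caused" by crossing into the contracting directions, so the geodesic must point out of $E$.
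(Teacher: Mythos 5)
Your proposed route has a genuine gap at its central step. You reduce the lemma to the claim ``if the direction $v$ of $D(\gamma)$ lies in $P\oplus F$ then $\gamma$ extends past $t=1$,'' but the only mechanism you offer for that claim is that the transported geodesic ``does not escape to infinity,'' together with a completeness criterion of the form ``completely visible and development compact implies extendable.'' This does not work: the developed image of \emph{every} incomplete geodesic is a bounded, compact segment of $\cN$ ending at $D(\gamma)(1)$ --- incompleteness is a failure in $\widetilde M$, not a divergence in $\cN$ --- so boundedness of the transported picture proves nothing, and ``complete visibility'' of the closed segment up to $t=1$ is precisely what would have to be established, not a criterion one can invoke. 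Moreover, the visibility statements you lean on (the half-space propositions around proposition~\ref{prop-eplus}) are downstream of this very lemma in the rank-one analysis, since they use that all $P$-directions are complete; arguing through them risks circularity. The difficulty you single out (the $P$-exponents being only asymptotically $1$) is comparatively minor.

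For comparison, the paper does not reprove this lemma (it is imported from~\cite{Ale3}), but the intended mechanism is the one already signalled by the remark that $\dim E>0$ because the $D(U_i)$ accumulate \emph{disjointly} on $\overline{D(\gamma)}$; your sketch never uses this disjointness, and it is the actual engine. Concretely: for $i$ large, $\gamma$ crosses $U_i$ from boundary to boundary, and these visits occupy pairwise disjoint parameter intervals, so the developed crossings are pairwise disjoint chords of the sets $D(U_i)$ contained in the finite-length segment $D(\gamma|_{[t_{i_0},1]})$; hence their lengths tend to $0$. On the other hand $D(U_i)=T_{ii_0}(D(U_{i_0}))$ and $T_{ii_0}(x+tv)=T_{ii_0}(x)+tf_{ii_0}(v)$, so the chord of $D(U_i)$ through $D(\gamma(t_i))$ in the direction $v$ has length bounded below by a uniform chord-length constant of $D(U_{i_0})$ (the crossing point pulls back near $D(y_{i_0})$) times $\|v\|/\|f_{ii_0}^{-1}(v)\|$. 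If $v$ had vanishing $E$-component, then $f_{ii_0}^{-1}(v)$ stays bounded (it contracts $F$ and is bounded on $P$), so these chords would have length bounded away from $0$, contradicting the previous paragraph; hence $v$ must have a nonzero coordinate along $E$. This quantitative disjointness argument is the missing idea in your proposal.
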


\begin{proposition}
Let $S\subset \widetilde M$ be a convex containing $\gamma$. Assume that $D(S)$ has smooth boundary at $D(\gamma)(1)$.
Let $i>0$.
The subspace $H_i = E_{+,i}\times (P\oplus F)$ is visible from $y_i$  (this is a half-space of $\cN$).
\end{proposition}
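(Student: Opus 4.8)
The plan is to assemble the final proposition from the pieces already in place, feeding the rank-one information into Proposition~\ref{prop-eplus} and the approximation lemmas. First I would invoke Proposition~\ref{prop-eplus} applied to the chosen convex $S$: it gives that $T_{ji}^{-1}(D(S))$ converges to a product $E_{+,i}\times P_c$ with $E_{+,i}\subset E|_{q_i}$ a half-space and $P_c\subset P|_{q_i}$ a neighbourhood of the origin, and that this product is visible from $y_i$. The point of the rank-one hypothesis is twofold. On one hand, by the lemma that $P=L_3$ is independent of the dynamics, every direction of $P$ is completely visible; on the other, the lemma on the direction vector of an incomplete geodesic shows $D(\gamma)$ has a nonzero $E$-component, which is what forces $E_{+,i}$ to be a genuine half-space rather than all of $E|_{q_i}$ or a lower-dimensional slab, and in particular ensures $q_i\notin E_{+,i}$ (the boundary hyperplane separates $q_i$ from $\gamma$'s endpoint).

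Next I would use the fact that $E_{+,i}$ never contains $q_i$ to activate Lemma~\ref{lem-approxneg}, so that $b_{kj,E}\to 0$ and hence $-f_{ji}^{-1}(c_{ji,L})\to 0$ when $j\gg i\to\infty$; this makes $T_{ji}^{-1}$ uniformly well approximated by $Q_{ji}^{-1}$ in the relevant region, so the limiting set of $T_{ji}^{-1}(D(S))$ coincides with that of $Q_{ji}^{-1}(D(S))$. For $Q_{ji}$ the dynamics is transparent: it acts by the fixed linear part $f_{ji}$ (contracting on $E$, expanding on $F$, isometry-up-to-rotation on $P$) around the fixed point $q_{ji}\to q_i$. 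Therefore, enlarging $S$ to fill out $E_{+,i}$ in the $E$-direction and, crucially, using complete visibility of the whole $P$-direction together with the expansion along $F$ (as in the lemma that $\{T_{ji}(V)\}$ has $F|_{q_i}$ in its limit set applied in reverse), the neighbourhood $P_c\times(\text{origin in }F)$ gets dilated in the limit to all of $P|_{q_i}$ and all of $F|_{q_i}$. This upgrades $E_{+,i}\times P_c$ to the full half-space $H_i=E_{+,i}\times(P\oplus F)$, and visibility is preserved under these limits by Proposition~\ref{prop-visconv} (limits of completely visible convex sets from $y_i$ are visible from $y_i$).

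Finally I would check that $H_i$ is indeed a half-space of $\cN$: since $E\oplus P\oplus F=\mathfrak n$ and $E_{+,i}$ is a half-space of the affine subspace $E|_{q_i}$ whose bounding hyperplane is the limit of $T_{ji}^{-1}(\rT_{D(\gamma)(1)}D(S))\cap E|_{q_i}$, the set $E_{+,i}\times(P\oplus F)$ is cut out of $\cN$ by a single affine hyperplane (the one through $\partial E_{+,i}$ spanned additionally by $P\oplus F$), so it is a half-space in the nil-affine sense. The main obstacle I expect is the passage to the limit for the $P$ and $F$ directions simultaneously: one must argue that visibility is not lost when one blows up the $P_c$ neighbourhood to all of $P|_{q_i}$ at the same time as the $F$-expansion sweeps out $F|_{q_i}$, and that the resulting set is still a \emph{convex} visible set and not merely a visible subset with holes. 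This is handled by combining the complete visibility of $P$-directions (so no geodesic in the $P$-direction is ever obstructed) with Proposition~\ref{prop-visconv} applied to the closed convex sets $T_{ji}^{-1}(D(S))$, whose limit $S_\infty$ is automatically closed, convex, and developed onto $H_i$; injectivity of $D$ on $\exp_{y_i}(V_{y_i})$ from the Carrière-type proposition then pins down $H_i$ as exactly the developed image of a convex visible region from $y_i$.
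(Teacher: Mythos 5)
There is a genuine gap in the mechanism you propose for obtaining the $F$-directions. Your argument takes the limit of the \emph{backward} orbit $T_{ji}^{-1}(D(S))$ and claims that ``the expansion along $F$ sweeps out $F|_{q_i}$'', with Proposition~\ref{prop-visconv} certifying that the limit is developed onto $H_i$. But $f_{ji}$ expands $F$ and contracts $E$, so $f_{ji}^{-1}$ \emph{contracts} the $F$-directions: the limit of $T_{ji}^{-1}(D(S))$ is exactly the set $E_{+,i}\times P_c$ of Proposition~\ref{prop-eplus}, flattened to the origin in $F$, and no amount of applying Proposition~\ref{prop-visconv} to these sets can produce a limit containing $F|_{q_i}$. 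In other words, the step where $P_c\times\{0\}$ ``gets dilated to all of $P|_{q_i}$ and $F|_{q_i}$'' is asserted rather than proved, and as stated it contradicts the very dynamics you set up. The expansion along $F$ is only available for the \emph{forward} maps $T_{ji}$, and to use it you must first exhibit a set, already known to be visible from $y_i$, which is uniformly thickened in the $F$-direction over all of $E_{+,i}\times P$; only then does $\lim_{j\to\infty}T_{ji}\bigl((E_{+,i}\times P)\times W\bigr)=(E_{+,i}\times P)\times F$ give the half-space.

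That uniform thickening is precisely the missing idea, and it is where the paper's proof does real work. The paper first gets full visibility of $E_{+,i}\times P$ from the completeness of the $P$-directions (a point you do use correctly), then takes a convex visible $K\subset E_{+,i}\times P$, thickens it to a visible convex $K\times W$ with $W\subset F$ open, and shows by contradiction that $K$ can be taken to be all of $E_{+,i}\times P$: an invisible point $z=k+w$ on $\partial(K)\times W$ would produce an incomplete geodesic inside $K\times W$ whose limit set $C_{0,\infty}$, being contained in $\rT_z(\partial K\times W)$, must be $F|_z=k+F$, which meets the completely visible set $E_{+,i}\times P$ at $k$ --- visible and invisible at once, a contradiction. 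Only after this does the forward iteration expand $W$ to $F$. Your proposal never addresses why a single $W$ works over the whole (unbounded) $E_{+,i}\times P$, and openness of the visibility set alone only gives a thickness depending on the point. Separately, Lemma~\ref{lem-approxneg} is not needed for this proposition (it is used later, for the description of $H_x$ and its boundary), so invoking $q_i\notin E_{+,i}$ here adds a dependency the statement does not require; but the essential defect is the confusion between the contracting backward limit, which only yields $E_{+,i}\times P_c$, and the expanding forward limit, which requires the uniform $F$-thickening you have not established.
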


Here we take the cartesian product in the sense of the  group law. A point of $A\times B$ is a point $a+b$ with $a\in A$ and $b\in B$.

\begin{proof}
We use here the fact that a two-step nilpotent space has the property that any affine subset (in the classical sense) is a nil-affine subset and conversely.

By  proposition~\ref{prop-eplus}, $E_{+,i}$ is a visible  from $y_i$. Since the directions in $P$ are always complete, the product $E_{+,i}\times P$ is fully visible from $y_i$. We show that we can extend $E_{+,i}\times P$ to a visible open $H_i$ containing $E_{+,i}\times (P\oplus F)$.

Let $K\subset E_{+,i}\times P$ be convex and visible from $y_i$. Since the visible space from $y_i$ is open, there exists an open convex $W\subset F$ such that $K\times W $ is visible and convex. 

We show that we can extend $K$ to $E_{+,i}\times P$. 
If that were not the case, there would exists $k+w\in \partial(K) \times W$ such that $z=k+w$ is invisible from $K\times W$. Now, at $z$, $E|_z$ is transverse to $\rT_z (\partial K\times W)$. Therefore, we can consider $u\in E$ such that $z - tu$ is inside $K\times W$ for $t>0$. We let $\eta(t) = z - u + tu$. It is an incomplete geodesic at $t=1$ but lies inside $K\times W$ for $t<1$.

Now, its limit set $C_{0,\infty}$ is contained in $\rT_z(\partial K\times W)$ and therefore can only be $F|_z$.
The limit  set $F|_{z}$ must intersect $E_{+,i}\times P$.
In fact it intersects it at $k$ since, $z= k+w$ and $z+F = k+w+F = k + F$.

Therefore it intersects $E_{+,i}\times P$ and is simultaneously invisible since it is in $C_{0,\infty}'\cap (K\times W)$ and visible since $E_{+,i}\times P$ is completely visible, a contradiction.

Now, $(E_{+,i}\times P)\times W$ can be extended to $H_i$ such that it contains $ E_{+,i}\times (P\oplus F)$. Indeed, apply $T_{ji}$ for $j\to \infty$ (note that $T_{ji}(x+y) = T_{ji}(x)+f_{ji}(y)$):
\begin{align}
\lim_{j\to \infty}  T_{ji}((E_{+,i}\times P)\times W) 
&=(E_{+,i}\times P) \times \lim_{j\to\infty}f_{ji}(W) \\
&= (E_{+,i}\times P) \times F.\qedhere
\end{align}
\end{proof}

\begin{lemma}
The half-spaces $H_i$ tend to a half-space denoted $H_x$ when $i\to \infty$. For $i\to \infty$, $q_i$ gets closer to $\overline{H_i}$. 
\end{lemma}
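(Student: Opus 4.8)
The plan is to determine $H_i$ from the two pieces of data describing a half-space of $\cN$ — recall that in a two-step nilpotent space nil-affine half-spaces are ordinary affine half-spaces — namely the direction of its bounding hyperplane and its position. Since $H_i=E_{+,i}\times(P\oplus F)$ and $P=L_3$ does not depend on the dynamics, $H_i$ is governed by the half-space $E_{+,i}\subset E|_{q_i}$, hence by the codimension-one direction $W_i\subset E$ of the hyperplane $\partial E_{+,i}$ and by the location of $\partial E_{+,i}$ inside $E|_{q_i}$. By proposition~\ref{prop-eplus}, $\partial E_{+,i}$ is the limit as $j\to\infty$ of $T_{ji}^{-1}(\rT_{D(\gamma)(1)}D(S))\cap E|_{q_i}$, and the whole point is to estimate this limit by replacing $T_{ji}^{-1}$ with $Q_{ji}^{-1}$.

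For the direction: writing $Q_{ji}^{-1}(x)=q_{ji}+f_{ji}^{-1}(x-q_{ji})$ and using that $T_{ji}^{-1}-Q_{ji}^{-1}=-f_{ji}^{-1}(c_{ji,L})=-b_{ji,E}+o(1)$ is a bounded translation, so irrelevant to directions, $W_i$ equals the limit as $j\to\infty$ of the direction of $f_{ji}^{-1}(\rT_{D(\gamma)(1)}D(S))$ sliced by $E$. Because $f_{ji,K}\to\id$ and the geometry has rank one, $f_{ji,A}^{-1}$ restricted to $E$ is a one-parameter group of diagonal expansions whose limiting eigen-flag is intrinsic to the exponents $\{d_q\}_{e_q\in E}$ and in particular does not depend on $i$; hence $W_i$ is one and the same codimension-one direction $W\subset E$ for every $i$.

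For the position — which is also the second assertion — note that $q_{ji}\in F|_{D(\gamma)(1)}$, so $D(\gamma)(1)-q_{ji}\in F$, and $f_{ji}^{-1}$ contracts $F$ (there $\beta_{ji,q}\to\infty$); therefore $\lim_{j\to\infty}Q_{ji}^{-1}(D(\gamma)(1))=\lim_{j}\bigl(q_{ji}+f_{ji}^{-1}(D(\gamma)(1)-q_{ji})\bigr)=q_i$. Since $D(\gamma)(1)\in\rT_{D(\gamma)(1)}D(S)$, it follows that $\partial E_{+,i}$ contains a point within $\|\lim_{j}b_{ji,E}\|+o(1)$ of $q_i$ inside $E|_{q_i}$. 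As in the rank one case $E_{+,i}$ never contains $q_i$, lemma~\ref{lem-approxneg} applies and gives $\lim_{i\to\infty}\lim_{j\to\infty}b_{ji,E}=0$; hence $d(q_i,\overline{H_i})\le\|\lim_{j}b_{ji,E}\|+o(1)\to0$, which is the second statement.

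To conclude the first statement, take $D(\gamma)(1)$ as base point. Then $q_i\in F|_{D(\gamma)(1)}$ has the same $E$-coordinate as $D(\gamma)(1)$, so $\partial E_{+,i}$ has $E$-coordinate within $o(1)$ of that of $D(\gamma)(1)$ while its direction tends to $W$; hence the half-spaces $H_i=E_{+,i}\times(P\oplus F)$ converge, in the limit-set sense of proposition~\ref{prop-visconv}, to the half-space $H_x$ bounded by the nil-affine hyperplane through $D(\gamma)(1)$ with direction $W\oplus P\oplus F$, and this is a genuine half-space because $\dim E>0$. The main obstacle is the direction step: one must check that applying the expanding one-parameter group $f_{ji,A}^{-1}|_E$ to the fixed hyperplane $\rT_{D(\gamma)(1)}D(S)$ produces, in the limit, a well-defined codimension-one direction, independent of $i$, even when $\rT_{D(\gamma)(1)}D(S)$ lies in special position relative to the dilation eigenspaces — this is exactly where the smoothness of $\partial D(S)$ at $D(\gamma)(1)$ and the rank-one hypothesis enter.
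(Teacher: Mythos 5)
Since the paper itself gives no proof of this lemma (it is among the statements imported from~\cite{Ale3}), I can only judge your argument against the machinery the paper sets up for it; and indeed your route is the intended one: describe $\partial E_{+,i}$ via proposition~\ref{prop-eplus}, replace $T_{ji}^{-1}$ by $Q_{ji}^{-1}$, and control the discrepancy by the two lemmas on $b_{ji,E}$. Your proof of the second assertion is essentially correct: since $-q_{ji}+D(\gamma)(1)\in\exp(F)$ and, for fixed $i$, $f_{ji}^{-1}$ contracts $F$ arbitrarily strongly, $\lim_j Q_{ji}^{-1}(D(\gamma)(1))=q_i$, while $T_{ji}^{-1}$ differs from $Q_{ji}^{-1}$ by (a conjugate of) $-f_{ji}^{-1}(c_{ji,L})\approx -b_{ji,E}$, bounded for fixed $i$ and tending to $0$ in the double limit by lemma~\ref{lem-approxneg}; the limit point you produce lies in $\overline{H_i}$ (not necessarily on $\partial E_{+,i}\subset E|_{q_i}$ as you state, but that is all you need), so $d(q_i,\overline{H_i})\to 0$.

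The gap is in the first assertion, the convergence of the $H_i$ themselves. First, your claim that the bounding direction $W_i$ is literally the same for all $i$ rests on ``$f_{ji,K}\to\id$'', but that is only the iterated limit $j\gg i\to\infty$; for fixed $i$ the rotational parts converge to some $\kappa_i\in K$ with no reason to be trivial, so what your argument actually yields is (up to the translation correction below) $W_i=\kappa_i^{-1}(W_A)$, and you still need $\kappa_i\to\id$ to get $W_i\to W_A$ as $i\to\infty$ --- which is what the lemma requires, but not what you wrote. Second, ``a bounded translation is irrelevant to directions'' is an abelian reflex: in a two-step nilpotent group left translation by $x$ turns a direction $v$ into $v+\frac 12[x,v]$ (the paper's very first computation), so the bounded discrepancy $-b_{ji,E}$ does tilt the limiting hyperplane for fixed $i$; only $\lim_i\lim_j b_{ji,E}=0$ makes this tilt disappear in the limit over $i$. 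Third, and most importantly, you leave the existence of the limiting direction under the expanding diagonal flow --- which is the actual content of ``$H_i$ tends to a half-space'' --- as an unproven ``main obstacle''. In fact this is not where smoothness or special position enter: for a fixed hyperplane and a one-parameter diagonal group whose parameter tends monotonically to infinity, the limit in the Grassmannian always exists and is determined by the weight filtration (special position only changes \emph{which} hyperplane the limit is, not its existence); the rank-one hypothesis is what guarantees that a single one-parameter group, hence a single limit $W_A$, governs every $i$. As written, the first half of the lemma is therefore asserted rather than proved; the repair is to carry out your own scheme with $W_i=\kappa_i^{-1}(W_A)$ plus the $b_{ji,E}$-correction and conclude with the two double-limit statements ($\kappa_i\to\id$ and $b_{ji,E}\to 0$), together with the observation that $\partial H_i$ has direction containing $F$ and passes within $o(1)$ of $q_i\in F|_{D(\gamma)(1)}$, so that the limit half-space is bounded by the hyperplane through $D(\gamma)(1)$.
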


\begin{lemma}\label{lem-boule}
Let $x\in\widetilde M$. Let $D(S)\subset D(\widetilde M)$ be  the maximal Euclidean open ball such that $x\in S$ and $S$ is visible from $x$. Then $H_x = \lim H_i $ contains $x$.
\end{lemma}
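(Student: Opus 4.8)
The plan is to argue that the maximal visible Euclidean ball $D(S)$ around $x$, if it does not already give completeness, must run into the incomplete behaviour captured by a Fried dynamics, and that this dynamics is (up to the exchange $E\leftrightarrow F$) the one attached to $x$ through the previous constructions. First I would dispose of the trivial case: if $V_x = \rT_x\widetilde M$ for every $x$ then $D$ is a diffeomorphism by the Carrière-type proposition, $M$ is complete, and there is nothing to prove; so I assume $M$ incomplete and pick the geodesic $\gamma$ issued from $x$ realizing the boundary of the maximal ball, i.e. $\gamma$ is a radial segment of $S$ that is incomplete at $t=1$ with $D(\gamma)(1)\in\partial D(S)$. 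Since $D(S)$ is a round ball it has smooth boundary at $D(\gamma)(1)$, so the hypotheses of Proposition~\ref{prop-eplus} and of the rank-one visibility proposition are met.

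Next I would run the Fried dynamics associated to $\gamma$ (choosing a recurrent point $y$ of $\pi(\gamma)$, the neighbourhoods $U_i$, the transformations $g_{ji}$, $T_{ji}$), obtaining the decomposition $\mathfrak n = E\oplus P\oplus F$ and, by the rank-one lemma, $P = L_3$ with $\{E,F\}=\{L_1,L_2\}$. Applying the rank-one visibility proposition to $S$ itself gives that the half-space $H_i = E_{+,i}\times(P\oplus F)$ is visible from $y_i$ for every $i>0$, and by the preceding lemma the $H_i$ converge to a half-space $H_x$ with $q_i\to\overline{H_i}$. The key point is then to transport visibility from $y_i$ back to $x$: recall $D(y_i) \in E|_{q_i}$, and $g_{i1}$ (or the relevant element of $\pi_1$) carries the lift of $y$ in $U_1$ — which I can arrange to be $x$ itself or a point whose developed image lies in $D(S)$ — so that $T_{i1}$ conjugates the visible half-space at $y_i$ to a visible half-space at $D(x)$. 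Passing to the limit $i\to\infty$ and using Proposition~\ref{prop-visconv} (limits of visible convex sets through geodesics issued from a fixed point are visible), the limit half-space $H_x$ is visible from $x$; since $D(S)\subset H_i$ for $i$ large (both contain $\gamma$ near its incomplete end and $H_i$ absorbs the $F$- and $P$-directions in the limit), the maximality forces $x\in\overline{D(S)}\subset\overline{H_x}$, hence $x\in H_x$ as $H_x$ is open and $x$ is interior to the visible region.

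The main obstacle I expect is the bookkeeping in the transport step: one must check that the Fried dynamics can be set up with base point exactly at $x$ (or at a point of $S$ close to $x$ controlled by the geometry), that the element of $\pi_1(M)$ sending $U_i$ to the chart containing $x$ is compatible with the cocycle $g_{ji}$, and that the estimate $T_{ji}^{-1}\approx Q_{ji}^{-1}$ from Lemma~\ref{lem-approxneg} (valid because, in rank one, $E_{+,i}$ never contains $q_i$) is strong enough to guarantee the limit of $T_{ji}^{-1}(D(S))$ is genuinely $H_x$ and not a strictly smaller half-space. A secondary subtlety is that $D(S)$ being a ball forces the boundary sphere to be tangent to $\partial H_x$ along the $E$-direction at $D(\gamma)(1)$, which is what pins down $H_x$ uniquely and lets me conclude $x\in H_x$ rather than merely $x\in \overline{H_x}$; I would handle this by noting that the radial geodesic from $x$ through $D(\gamma)(1)$ has, by the previous lemma, a non-vanishing $E$-component, so $x$ lies strictly on the interior side of the limiting hyperplane.
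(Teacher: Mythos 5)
Your proposal has a genuine gap at its pivotal step. The lemma is not about transporting visibility from $y_i$ back to $x$ (visibility of $H_i$ plays no role here); it is a purely metric statement about where $D(x)$ sits relative to the limit half-space, and your argument for that reduces to the claim ``$D(S)\subset H_i$ for $i$ large'', which is unjustified and in general false. The bounding hyperplane of $H_i$ passes through (or arbitrarily near) the boundary point $D(\gamma)(1)\in\partial D(S)$, but its normal direction is \emph{not} the normal of the ball: $H_i$ arises as the limit of $T_{ji}^{-1}(D(S))$, so its bounding hyperplane has normal (up to normalization) $\lim f_{kj}^{-1}(\nu_{D(\gamma)(1)}S)$, a vector of $E$ that is in general tilted with respect to $\nu_{D(\gamma)(1)}S$. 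A hyperplane through a boundary point of a round ball that is not tangent to it slices the ball, so the containment you invoke fails, and with it the conclusion. Your closing remark that the sphere must be tangent to $\partial H_x$ at $D(\gamma)(1)$ is likewise not true, and the observation that the radial direction has a nonzero $E$-component gives no information on the \emph{sign} needed to place $x$ on the interior side. Finally, even granting your containment, ``$x\in\overline{H_x}$ and $H_x$ open, hence $x\in H_x$'' is a non sequitur; the lemma requires a strict statement.

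What the paper actually does, and what is missing from your plan, is a quantitative comparison of normals. Base $\cN$ at $D(\gamma)(1)$ and use the Euclidean structure adapted to $(e_1,\dots,e_n)$, so that $E$, $P$, $F$ are orthogonal. Since $S$ is the maximal ball centered at $x$, the vector $x$ is antiparallel to the exterior normal $\nu_{D(\gamma)(1)}S$, and both lie in $E|_{D(\gamma)(1)}$ because the tangent plane $\Delta$ of $D(S)$ at $D(\gamma)(1)$ contains $(P\oplus F)|_{D(\gamma)(1)}$. Using $T_{kj}^{-1}(\Delta)=Q_{kj}^{-1}(\Delta)+o(1)=f_{kj}^{-1}(\Delta)+o(1)$ (here Lemma~\ref{lem-approxneg} enters, as you anticipated) and the fact that for $k\gg j$ the map $f_{kj}$ is essentially its positive diagonal $A$-factor, one writes
\begin{equation}
\bigl\langle x,\, f_{kj,A}^{-1}(\nu_{D(\gamma)(1)}S)\bigr\rangle
=\bigl\langle f_{kj,A}^{-1/2}x,\, f_{kj,A}^{-1/2}(\nu_{D(\gamma)(1)}S)\bigr\rangle<0,
\end{equation}
the sign being forced because a positive diagonal map is positive definite and $x$ is a negative multiple of $\nu_{D(\gamma)(1)}S$; the expansion of $f_{kj,A}^{-1/2}$ on $E$ keeps this quantity bounded away from $0$ in the limit. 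This strict inequality says exactly that $x$ lies strictly on the interior side of the limiting hyperplane $\partial H_x$, i.e.\ $x\in H_x$. Your proposal never produces this inequality, and without it the tilt of the limiting normal is uncontrolled, so the argument does not close.
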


\begin{proof}
We choose the Euclidean norm on $\mathfrak n$ associated to the basis $(e_1,\dots e_n)$. Those Euclidean balls are geodesically convex in two-step nilpotency.

We base $\cN$ at $D(\gamma)(1)$.
To prove $x \in \lim H_i$, we prove that $\langle x,\nu_{D(\gamma)(1)} H_x\rangle  < 0$ with $\nu H_x$ the choice of an exterior normal vector at $D(\gamma)(1)\in\partial H_x$. Note that for $S$ the maximal Euclidean open ball we have $\langle x, \nu_{D(\gamma)(1)} S\rangle <0$ since $x$ is normal to the boundary of $S$ at $D(\gamma)(1)$.

The normal vector at the boundary $\nu_{D(\gamma)(1)}H_x$ is described by $\lim f_{kj}^{-1} (\nu_{D(\gamma)(1)} S)$ when $j\to \infty$ and $k\to \infty$. 
Indeed, $H_x$ is described by $\lim T_{kj}^{-1}(S)$. Let $\Delta$ be the tangent plane to $S$ at $D(\gamma)(1)$. Then $\Delta$ contains $(P\oplus F)|_{D(\gamma)(1)}$. Since $F$ is a subalgebra, and $f_{mn}$ preserves $F$,
\begin{align}
T_{kj}^{-1} (\Delta)  &= Q_{kj}^{-1}(\Delta) + o(1) \\
&= c_{kj,F} + f_{kj}^{-1}(\Delta) + o(1)\\
&= f_{kj}^{-1}(\Delta) + o(1)
\end{align} 

Now, note that when $k\gg j\to \infty$, we get that $f_{kj}$ is essentially its $A$-factor since the rotational part tends to the identity. The map $f_{kj,A}$ acts diagonally with positive factors. 
Denote $f_{kj,A}^{1/2}$ the transformation obtained by taking the square root of each diagonal factor. 

Both $x$ and $\nu_{D(\gamma)(1)}$ are normal to $\Delta\supset (F\oplus P)|_{D(\gamma)(1)}$. So both are contained in $E|_{D(\gamma)(1)}$. 
Note that $f_{kj}^{-1/2}$ acts by arbitrarily strong expansions on $x$ and $\nu_{D(\gamma)(1)}$.
By consequence
\begin{align}
\langle x, \lim f_{kj}^{-1}(\nu_{D(\gamma)(1)}S)\rangle &= \langle x , \lim f_{kj,A}^{-1}(\nu_{D(\gamma)(1)}S)\rangle + o(1) \\
&=  \lim \langle  f_{kj}^{-1/2} x , f_{kj}^{-1/2}(\nu_{D(\gamma)(1)}S)\rangle + o(1) \\
&\ll \langle x, \nu_{D(\gamma)(1)}S\rangle + o(1) < 0.
\end{align}
It proves that $x\in H_x$.
\end{proof}

For any $x\in \widetilde M$, we choose $S\subset \widetilde M$ the maximal convex open subset such that $D(S)$ is a Euclidean  open ball. By applying the construction to the point $x$, the convex $S$ and an incomplete geodesic $\gamma\subset S$ (which exists by  maximality of $S$) we obtain an half-space $H_x=\lim H_i$ and $D(x)\in H_x$. This choice is now assumed.

\begin{lemma}
Let $I\subset \partial H_x$ be the invisible set from the interior of $H_x$. (Note that $(F\oplus P)|_{D(\gamma)(1)}\subset I$.) 
Then $I$ does not depend on $x\in \widetilde M$ and  $I$ is a  nil-affine subspace.
\end{lemma}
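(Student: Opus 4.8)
The plan is to first determine $I$ explicitly and then to show that its description is forced to be the same at every point of $\widetilde M$. Base $\cN$ at $w\coloneqq D(\gamma)(1)$. By Lemma~\ref{lem-boule} and the lemmas preceding it, $\partial H_x$ is the nil-affine hyperplane $(E'\oplus P\oplus F)|_{w}$, where $E'\subset E$ is the hyperplane orthogonal to the exterior normal $\nu_{w}H_x\in E|_{w}$, and the interior of $H_x$ is the open half-space on the side of $D(x)$; the inclusion $(P\oplus F)|_{w}\subset I$ is already recorded. I would show that a geodesic of $\widetilde M$ issued from an interior point of $H_x$ and abutting $\partial H_x$ does so \emph{completely} unless its endpoint lies in $(P\oplus F)|_{w}$: by the convexity proposition applied to the convex lift $\widehat{H_x}$ of $H_x$ it suffices to treat geodesics issued from $x$, and such a geodesic incomplete at $t=1$ has, by the $C_{0,\infty}$-computation, a limit set equal to a left translate of $F$; comparing this limit set with $\partial H_x$ forces the endpoint onto $(P\oplus F)|_{w}$. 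Conversely, a boundary point off $(P\oplus F)|_{w}$ has a non-trivial $E'$-displacement from $w$ and is reached by a complete geodesic, since the explicit form $H_i=E_{+,i}\times(P\oplus F)$ of the visible half-spaces together with Proposition~\ref{prop-visconv} (limits of completely visible geodesics stay completely visible) shows that all of $\partial H_x$ minus $(P\oplus F)|_{w}$ is visible. Hence $I=(P\oplus F)|_{w}$, a left translate of the subalgebra $P\oplus F$ (recall $[P\oplus F,P\oplus F]\subset P\oplus F$), so $I$ is a nil-affine subspace. This part is the rank-one specialization of the corresponding argument in~\cite{Ale3}.

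For independence of $x$, the strategy is to prove that the half-spaces are coherent on overlaps and then to invoke connectedness. Suppose $D(x')\in H_x$. Using the convexity proposition — which keeps $D$ injective on $\widehat{H_x}$ together with any overlapping convex neighbourhood, and so pins down in which sheet $x'$ sits — one reduces to the case $x'\in\widehat{H_x}$. Then the maximal Euclidean ball $S'$ defining $H_{x'}$ must abut $\partial H_x$ along $I$: every interior point of $H_x$ is completely visible from $x'$, so the only obstruction to enlarging $S'$ is $I\subset\partial H_x$, and $S'$ therefore contains a geodesic incomplete at $t=1$ whose endpoint lies in $I$. Since this geodesic lies in $\widehat{H_x}$, its Fried dynamics unfolds inside $H_x$; re-running the construction of section~\ref{sec-4} with it produces the same triple $(E,P,F)$ — up to the harmless exchange of $E$ and $F$ permitted in rank one, which leaves $P\oplus F$ unchanged — and an endpoint in the same $\exp(P\oplus F)$-coset, so $I_{x'}=I_x$. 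Finally, $D(\widetilde M)=\bigcup_{x}H_x$ is connected and the relation $D(x')\in H_x$ chains overlapping half-spaces, whence $I_{x'}=I_x$ for every pair $x,x'$.

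The main obstacle is precisely this coherence step: one must rule out that running the Fried construction from a point $x'$ lying over $H_x$ in a different sheet could produce a different subalgebra $F$ or a different translate. The subtlety is that $E$, $P$, $F$ and the basepoint $w$ are defined through asymptotic data (the recurrent point, the times $t_i$, the splitting $E\oplus P\oplus F$), so the argument has to show that \emph{any} incomplete geodesic contained in $\widehat{H_x}$ and limiting onto $I$ yields, after passing to its Fried dynamics, the nil-affine subspace $(P\oplus F)|_{w}$ already attached to $x$. This is where the rank-one rigidity is essential: $P$ does not depend on the dynamics (the condition $d=0$ is a condition on the one-dimensional $A$), $\{E,F\}$ can at worst be swapped, and the endpoints of all the incomplete geodesics in $\widehat{H_x}$ necessarily lie on the common hyperplane $\partial H_x=(E'\oplus P\oplus F)|_{w}$ and in its invisible part, hence in a single $\exp(P\oplus F)$-coset. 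Granting coherence, the connectedness argument is routine and delivers the $x$-independence of the nil-affine subspace $I$.
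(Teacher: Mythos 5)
Your argument hinges on the explicit identification $I=(P\oplus F)|_{D(\gamma)(1)}$, and that identification is both unjustified and strictly stronger than what the paper establishes (the paper defers the proof of this lemma to~\cite{Ale3}, but its later sections show what it does and does not claim). The step where you declare every point of $\partial H_x$ off $(P\oplus F)|_{D(\gamma)(1)}$ visible rests on reading Proposition~\ref{prop-visconv} as ``limits of completely visible geodesics stay completely visible''; that proposition requires the sequence of convex sets to have a limit point \emph{inside} $\widetilde M$, and a point of $\partial H_x$ is precisely a point for which no such interior limit is guaranteed: being a limit of visible points does not make a point visible — this is exactly what incompleteness of a geodesic at $t=1$ means. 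Symmetrically, the converse step — that an incomplete geodesic issued from the interior of $H_x$ must end on $(P\oplus F)|_{D(\gamma)(1)}$ because its own limit set is a translate of $F$ — is asserted rather than proved: the endpoint lies somewhere in $\partial H_x$, which also contains a hyperplane's worth of $E$-directions, and comparing the translate of $F$ at the endpoint with $\partial H_x$ does not kill the endpoint's possible $E$-component. In fact the paper deliberately refrains from claiming $I=(P\oplus F)|_{D(\gamma)(1)}$: section~\ref{sec-5} works with the decomposition $I=P\oplus F\oplus\bigoplus(E_i\cap I)$, allowing non-trivial $E$-directions inside $I$, and the remark following the proof of theorem~\ref{thm-partialcomp} says that even the weaker statement ``$I$ is reduced to $P$ when $F=\{0\}$'' requires an additional Matsumoto-type discreteness argument. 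So the lemma's content is only that the invisible set is a nil-affine subspace independent of $x$, and your proof of both assertions passes through the stronger, unproved identification.

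Because of this, the coherence-and-connectedness paragraph does not close either: the independence of $x$ is made to follow from ``endpoints in the same $\exp(P\oplus F)$-coset'', which again presupposes the identification. What rank-one rigidity genuinely gives you is that $P$ is dynamics-independent and $\{E,F\}$ can at worst be exchanged, but this does not by itself locate the invisible set inside a single coset of $\exp(P\oplus F)$. The structural facts you need (that $I$ is a translate of a linear subspace and is holonomy-invariant) are obtained in the paper by letting the holonomy act on the invisible set itself — see the dynamical arguments of section~\ref{sec-5}, e.g.\ the use of the transformations $T_{ji}$ together with lemma~\ref{lem-convQ} to show $I$ is a subalgebra — not by an a priori computation of $I$ from the half-space picture. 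As written, the first step of your proof has a genuine gap and the remaining steps inherit it.
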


\begin{proof}[Proof of theorem~\ref{thm-partialcomp}]
The nil-affine subspace $I$ is constant and contains every $D(\gamma)(1)$ for any incomplete geodesic $\gamma\subset S$ in the maximal open ball of any $x\in \widetilde M$. We show it implies that $D\colon\widetilde M \to \cN-I$ is a covering map.

Let $\delta\colon \iI \to \cN-I$ be a path. Choose $x\in \widetilde M$ such that $D(x)=\delta(0)$. We need to prove that $\delta$ can be lifted to a path in $\widetilde M$ based at $x$. We can assume that $\delta$ can be lifted for $t<1$. We show that is can be lifted at $t=1$. Since $\delta(1)\not \in I$, there exists a point $\delta(s)$ with $s<1$ such that the maximal  open ball based at $\delta(s)$ and avoiding $I$ contains $\delta(t)$ for $s\leq t \leq 1$. But then for the lift at time $s$, the corresponding open ball $S$ is convex and allows to lift $\delta$ for $s\leq t \leq 1$.
\end{proof}

To be more precise on the nature of $I$, we could use a
discreteness argument very close to what Matsumoto~\cite{Matsumoto} proposed (see also~\cite[end of sec. 4]{Ale} and \cite{Ale3}). It shows that in fact $I$ is reduced to $P$ if $F=\{0\}$. In particular, generalizations of Fried theorem~\cite{Fried,Miner,Ale} are proven: if $F=P=\{0\}$ then the developing map is a cover onto the complement of a point.

\section{Reducibility of incomplete manifolds}\label{sec-5}

Before proving the theorems~\ref{thm-markus} and \ref{thm-auto} on closed manifold with rank one ray structures, we examine the invisible subspace $I$. We will prove that $I\subset N$ is in fact a subgroup. This analysis does not depend on the fact that $\mathfrak n$ is two-step nilpotent.

Let $(G_1,\cN)$ be a rank one ray geometry. Let $M$ be a closed incomplete manifold. Let $D\colon\widetilde M \to \cN-I$ be its developing map, with $I$ given by theorem~\ref{thm-partialcomp}. 

We choose a  base point in $I$ that is also a asymptotic fixed point of a Fried dynamic. That is to say, $0\in I$ is the asymptotic fixed point of $T_{ji}$ for $i>0$ fixed and $j\to \infty$. 

We know that $I \supset (P\oplus F)$.
We start by describing the rest of $I$ in $\mathfrak n$, and notably in $E$.

Note that $[E,E]\subset E$. So $E$ is a nilpotent subalgebra of $\mathfrak n$.
Define a decomposition
\begin{equation}
E = E_1\oplus \dots \oplus E_q
\end{equation}
where each $E_i$ is the subspace of $E$ on which $A_1$ (that has rank one) acts by homotheties with the degree $d_i$. Order the degrees such that $d_1<d_2<\dots<d_q$. Observe that since $A_1$ acts by automorphisms, we have $[E_i,E_j]\subset E_{i+j}$.

It gives a graduation of the nilpotent Lie algebra $E$, and we can consider the sequence of ideals
\begin{equation}
E = E^1\supset E^2\supset \dots \supset E^{q+1}=\{0\}
\end{equation}
where
\begin{equation}
E^i = E_i\oplus E_{i+1}\oplus \dots\oplus E_q
\end{equation}
and it verifies $[E,E^i]\subset E^{i+1}$. With lemma~\ref{lem-decomp} in sight we show the following.

\begin{lemma}
The linear subspace $I$ is decomposed by:
\begin{equation}
I = P \oplus F \oplus \bigoplus E_i\cap I.
 \end{equation}
\end{lemma}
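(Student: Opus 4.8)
The plan is to show that the linear subspace $I$ is graded-compatible with the decomposition $\mathfrak n = P\oplus F\oplus E_1\oplus\dots\oplus E_q$, which since $A_1$ has rank one is equivalent to showing $I$ is invariant under the one-parameter diagonal group $A_1$. Once we know $I$ is $A_1$-invariant, the claimed decomposition follows at once: $I$ already contains $P\oplus F$, and $I\cap E$ must be a sum of the eigenspaces of the semisimple generator of $\mathfrak a_1$ restricted to $E$, namely a sum of the $E_i\cap I$.

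\begin{proof}
We use the fact that $0\in I$ was chosen to be the asymptotic fixed point of $T_{ji}$ for $i>0$ fixed and $j\to\infty$. Write $T_{ji}(x)=c_{ji}+f_{ji}(x)$ with $f_{ji}=f_{ji,K}f_{ji,A}$; by lemma~\ref{lem-convQ} and the description of $q_i$ we have $c_{ji}\to 0$ in the relevant sense, and $f_{ji,K}\to\id$, so that $T_{ji}$ converges (for $j\to\infty$, $i$ fixed) towards a linear map whose linear part is a limit of elements of $A_1$. On $E$ the diagonal elements $\beta_{ji,q}$ all tend to $0$, so after rescaling one extracts a genuine one-parameter subgroup of $A_1$ in the closure of $\{f_{ji,A}\}$. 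The point is that $I=\partial H_x\setminus(\text{visible set})$ is preserved by the holonomy (it is intrinsically attached to the geometry of $\widetilde M$, independent of $x$ by the last lemma before theorem~\ref{thm-partialcomp}), hence it is preserved by every $T_{ji}$ fixing $0$, hence by the limiting diagonal flow. Therefore $I$ is invariant under the connected group $A_1$.

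Now $I$ is an $A_1$-invariant linear subspace of $\mathfrak n=P\oplus F\oplus E$. Since $A_1$ acts diagonally in the basis $(e_1,\dots,e_n)$, with $P$ the eigenvalue-$1$ block, $F$ the expanding block, and $E=E_1\oplus\dots\oplus E_q$ the contracting blocks with \emph{distinct} degrees $d_1<\dots<d_q$, the eigenspace decomposition of the generator of $\mathfrak a_1$ has the $E_i$ as distinct eigenspaces inside $E$ (and $P$, $F$ as further blocks). An invariant subspace of a diagonalizable operator is the direct sum of its intersections with the eigenspaces. We already know $P\oplus F\subset I$, so
\begin{equation}
I = (I\cap P)\oplus(I\cap F)\oplus\bigoplus_i (I\cap E_i) = P\oplus F\oplus\bigoplus_i (E_i\cap I),
\end{equation}
which is the assertion.
\end{proof}

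The step I expect to be the main obstacle is the first one: carefully justifying that $I$ is preserved by the limiting diagonal flow rather than merely by the individual $T_{ji}$. One must check that the limit of the $T_{ji}$ (after the rescaling that turns the contracting $A_1$-part into a one-parameter group) is well-defined as a projective or affine limit and that $I$, being a closed nil-affine subspace carved out by a visibility condition that is natural under nil-affine maps, passes to this limit; the compatibility of lemma~\ref{lem-decomp} with the grading guarantees the algebraic side behaves well, but the convergence bookkeeping (using lemma~\ref{lem-convQ} to discard the $L$-part of $c_{ji}$ and lemma~\ref{lem-asympt} to control the $A_1$-factor) is where the care is needed.
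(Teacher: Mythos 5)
There is a genuine gap at the central step of your argument: the claim that $I$ is invariant under the connected group $A_1$. You try to get this by saying that $T_{ji}$ "converges towards a linear map", that the $T_{ji}$ "fix $0$", and that "after rescaling one extracts a genuine one-parameter subgroup of $A_1$ in the closure of $\{f_{ji,A}\}$". None of these assertions holds as stated: $0$ is only the \emph{asymptotic} fixed point, so no $T_{ji}$ fixes it; the linear parts $f_{ji}$ do not converge in $\GL(\mathfrak n)$ (their eigenvalues tend to $0$ on $E$ and to $\infty$ on $F$), so the sequence $f_{ji,A}$ diverges in the one-dimensional group $A_1$ and its closure contains no one-parameter subgroup, and a scalar rescaling of an element of $A_1$ is in general not in $A_1$ (the weights $d_1<\dots<d_q$ are distinct, so only one block can be normalized at a time). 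More structurally, what you would really need is that the linear parts of holonomy elements preserve $I$; but in the paper that is lemma~\ref{lem-ssespace}, whose proof uses that $I$ is a subgroup, which in turn is proved \emph{after} (and using) the present lemma — so invoking linear-part invariance here is circular. Note also that $A_1$-invariance of $I$ is essentially equivalent to the statement being proved, so deriving it cannot be the easy half of the argument.

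The correct mechanism, and the one the paper uses, avoids any claim that elements of $KA_1$ preserve $I$: the holonomy elements $T_{ji}$ themselves preserve the closed set $I$, and because the base point is the asymptotic fixed point one has $c_{ji}\to 0$, so for a line $\Delta\subset I\cap E$ the images $T_{ji}(\Delta)$ have the same limit (as sets of directions) as $f_{ji}(\Delta)$; computing this limit projectively — normalize the diagonal action by $\beta^{d_i}$ for $i$ the minimal degree carrying a nonzero coordinate of $\Delta$, so that all higher-degree coordinates are multiplied by $\beta^{d_j-d_i}\to 0$ — shows the limit line lies in $E_i$, and it lies in $I$ by closedness and invariance of $I$ under $T_{ji}$. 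This produces the lowest-weight component of any element of $I\cap E$ inside $E_i\cap I$; subtracting it (using that $I$ is a linear subspace) and iterating on the remaining components gives $I\cap E=\bigoplus_i(E_i\cap I)$, and together with $P\oplus F\subset I$ this is the decomposition. Your final linear-algebra observation (an invariant subspace of a diagonalizable operator splits along eigenspaces) is fine, but it only becomes applicable once invariance is known, which is exactly the point your proposal does not establish.
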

\begin{proof}
Since we know that $P\oplus F \subset I$, we only need to show that if $\Delta\subset I$ is a vectorial line in $E$ that has a non vanishing coordinate on $E_i$ for $i$ minimal, then there exists a vectorial line $\Delta'$ of $I$ fully contained in $E_i$.

Because we chose for base point the asymptotic fixed point of $T_{ji}$, we have that $T_{ji}(x)=c_{ji}+f_{ji}(x)$ with $c_{ji}\to 0$. So $T_{ji}(\Delta)$ tends to the same line as $f_{ji}(\Delta)$. Let $\Delta'$ be this limit. It is again in $I\cap E$ since $f_{ji}$ preserves $E$. It is completely contained in $E_i$ since it is the minimal degree of $f_{ji}$ that acts on $\Delta$.

Indeed, assume for example that $i=1$. Then $f_{ji}(\Delta)= f_{ji}(\R x) = \R(\beta^{d_1} x_1,\dots, \beta^d_q x_n)$. By normalizing by $\beta^{d_1}$ we get $\R(x_1,\dots, \beta^{d_j-d_1}x_{l}, \dots, \beta^{d_q-d_1}x_n)$ and every coordinate that is after $E_i$ tends to zero.
\end{proof}

\begin{lemma}
The  subspace $I$ is a subalgebra of $\mathfrak n$.
\end{lemma}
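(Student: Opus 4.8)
The plan is to show that $I$ is closed under the Lie bracket of $\mathfrak n$ by exploiting the decomposition from the previous lemma, namely $I = P \oplus F \oplus \bigoplus_i (E_i \cap I)$, together with the grading hypotheses that are already available. First I would recall which brackets are automatically controlled: since $A_1$ acts by automorphisms and $P\oplus F$ is $A_1$-invariant with $F$ a subalgebra, we already know $[F,F]\subset F$, $[F\oplus P, F\oplus P]\subset F\oplus P$, and (from Section~4) $[E,E]\subset E$ with $[E_i,E_j]\subset E_{i+j}$. So the bracket of any two elements of $I$ decomposes into a piece in $F\oplus P$ — which lies in $I$ for free — and a piece in $E$; the whole problem reduces to showing that for $x\in E_i\cap I$ and $y\in E_j\cap I$, the bracket $[x,y]\in E_{i+j}$ actually lands in $E_{i+j}\cap I$.

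The key step is a dynamical argument analogous to the one used to prove the lemma on the decomposition of $I$. Having based $\cN$ at the asymptotic fixed point $0\in I$ of a Fried dynamics $T_{ji}=c_{ji}+f_{ji}$ with $c_{ji}\to 0$, the invisible set $I$ is preserved in the limit: if a linear subspace lies in $I$, so does $\lim f_{ji}$ of it (this is exactly how $\Delta'$ was produced). Now take $x\in E_i\cap I$, $y\in E_j\cap I$. Apply $T_{ji}$; because $c_{ji}\to 0$ and $f_{ji}$ acts diagonally with a single contraction factor $\beta=\beta_{ji}\to 0$ on $E$, the BCH/commutator term in $T_{ji}(x+y)$ is governed by $[f_{ji}(x),f_{ji}(y)] = \beta^{d_i+d_j}[x,y]\in E_{i+j}$. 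Rescaling the pair $(f_{ji}(x), f_{ji}(y), \tfrac12[f_{ji}(x),f_{ji}(y)])$ by an appropriate power of $\beta$ so that the $E_i$-component of the first vector is normalized, the bracket term survives in the limit precisely in degree $E_{i+j}$ (all lower-degree data has been normalized away, all higher-degree data, if any, is killed by the contraction as in the previous proof). Since nil-affine subspaces based at $0$ are closed under these limits and $I$ is a nil-affine subspace, the limiting configuration forces $[x,y]$ — up to scale — to define a line of $I$ contained in $E_{i+j}$, hence $[x,y]\in I$.

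Concretely I would argue at the level of the nil-affine subspaces $\exp_0(\mathrm{span}(x)) \cdot \exp_0(\mathrm{span}(y))$ inside $\widetilde M$ (or rather their $D$-images): these are convex subsets of $\cN - I$ away from the boundary, their closures touch $\partial H_x$ only in $I$, and pushing forward by $T_{ji}$ and passing to the limit along Proposition~\ref{prop-visconv} keeps everything inside the invisible nil-affine subspace $I$. The BCH correction term, which is exactly $\tfrac12[x,y]$ up to higher-order (vanishing) terms, is thereby exhibited as lying in the linear span of $I$; since it lives in $E_{i+j}$, we conclude $[x,y]\in E_{i+j}\cap I \subset I$. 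Combining this with the free containments $[F\oplus P, F\oplus P]\subset F\oplus P\subset I$ and $[E, F\oplus P]\subset$ (the $A_1$-invariant complement, which one checks also respects $I$ by the same grading argument) completes the verification that $I$ is a subalgebra.

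The main obstacle is making the normalization in the limiting argument clean: one must track the single contraction factor $\beta_{ji}$ across all graded pieces $E_1,\dots,E_q$ simultaneously, ensure that after rescaling the bracket term $\tfrac12[x,y]$ neither blows up nor vanishes (this is where the exact additivity $d_i+d_j$ of the degrees is essential, and where one must rule out degenerate cancellations), and confirm that the limiting line genuinely lies in $I$ rather than merely in its closure — this last point relies on $I$ being a closed nil-affine subspace, which we already have. The case analysis for whether $[x,y]=0$ (trivially in $I$) versus $[x,y]\neq 0$ should be handled up front to avoid normalizing by zero.
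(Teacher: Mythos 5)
There is a genuine gap, in fact two. First, your reduction is incomplete: you treat $[E\cap I,\,P\oplus F]$ as ``free'' (handled ``by the same grading argument''), but it is not. Since the weights on $F$ are negative and those on $E$ positive, a bracket $[x_m,e_k]$ with $x_m\in E\cap I$ and $e_k\in P\oplus F$ can perfectly well have a component in $E$, and nothing forces that component into $I$ a priori. The paper devotes the first half of its proof precisely to $[E\cap I,\,P\oplus F]\subset I$, and this containment is then a prerequisite for the second half: it yields $[F,I]\subset I$, which is what justifies (via the BCH identity $\exp(q)+\exp(p)=\exp(p+[q,p]+\tfrac1{2!}[q,[q,p]]+\dots)+\exp(q)$) that points of the form $-f_{ji}^{-1}(c_F)+f_{ji}^{-1}(\epsilon x_k)$ still lie in $I$, which is exactly what one needs to run the argument for $[E\cap I,E\cap I]$. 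So the two cases cannot be decoupled the way you propose.

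Second, and more fundamentally, your mechanism never establishes that the relevant group products lie in $I$, which is the whole point. In the paper one fixes $\epsilon x_m\in I$, takes the Fried dynamics based there, and feeds into $T_{ji}$ a point $x=-f_{ji}^{-1}(c_F)+f_{ji}^{-1}(\epsilon e_k)$ that is known to be in $I$ because $P\oplus F\subset I$ is an $f_{ji}$-invariant subgroup; since the holonomy preserves $I$ and $c_{E\oplus P}\to\epsilon x_m$ (lemma~\ref{lem-convQ}), the limit $\epsilon x_m+\epsilon e_k$ of the images $T_{ji}(x)=c_{E\oplus P}+\epsilon e_k$ lies in the closed set $I$; the bracket is then extracted purely algebraically from the second-order term of $\ln(\epsilon x_m+\epsilon e_k)$ as $\epsilon\to 0$, using that $I$ is a closed linear subspace. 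Your proposal instead pushes forward the product set $\exp(\mathrm{span}(x))\exp(\mathrm{span}(y))$ and rescales by powers of $\beta_{ji}$ to isolate the degree-$d_i+d_j$ term; but $x,y\in I$, so this set is not a ``convex subset of $\cN-I$'' (its factors sit in the invisible locus, outside the image of $D$), Proposition~\ref{prop-visconv} concerns orbits of convex subsets of $\widetilde M$ and does not apply to it, and the assertion that ``the limiting configuration forces $[x,y]$ to define a line of $I$'' is exactly the statement to be proven, not a consequence of the contraction bookkeeping. Without an argument producing points of $I$ whose limit encodes the product (as the paper does through the translation parts of the holonomy), the degree-isolation step has nothing to act on.
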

\begin{proof}
We already know that $P\oplus F$ is a subgroup since $[P\oplus F,P\oplus F]\subset P\oplus F$. To prove that $I$ is a subgroup, we show that $[E\cap I,P\oplus F]\subset I$ and that $[E\cap I,E\cap I]\subset I$. Since $I = P\oplus F \oplus (E \cap I)$ (by the preceding lemma) it implies $[I,I]\subset I$ and therefore that $I$ is a subalgebra.

The preceding lemma shows that $E\cap I$ is even the direct sum of every $E_m\cap I$. To prove $[E\cap I,P\oplus F]\subset I$ we let $x_m \in E_m\cap I$ and $e_k\in P\oplus F$ (an element of the basis $(e_1,\dots,e_n)$)  and we show that $[x_m,e_k]\in I$. 

We let $\epsilon>0$ and consider the point $\epsilon x_m \in I$. It gives a Fried dynamic $T_{ji}$ centered asymptotically at a point in $F|_{\epsilon x_m}$. By consequence, without changing the base point of $N$, we use the decomposition $(E\oplus P)\oplus F$ at the origin and we have:
\begin{equation}
T_{ji}(x) = c_{E\oplus P} + c_{ F} + f_{ji}(x)
\end{equation}
where $c_{E\oplus P}\to \epsilon x_m$ by lemma~\ref{lem-convQ}. Now let
$x = -f_{ji}^{-1}(c_{F}) + f_{ji}^{-1}(\epsilon e_k)$. Then $x\in I$ because $f_{ji}$ preserves $P\oplus F$ and $P\oplus F$ is a subgroup. Therefore $T_{ji}(x)\in I$ and
\begin{equation}
T_{ji}(x) = c_{E\oplus P} + \epsilon e_k \to \epsilon x_m + \epsilon e_k\in I.
\end{equation}
In the Lie algebra, 
\begin{equation}
\ln (\epsilon x_m + \epsilon e_k) = \epsilon x_m  + \epsilon e_k + \epsilon^2 \frac 12 [x,e_k] + o(\epsilon^2)
\end{equation}
and by consequence
\begin{equation}
\lim_{\epsilon \to 0}\frac 1{\epsilon^2}2\left(\ln (\epsilon x_m + \epsilon e_k)  - \epsilon x_m -\epsilon e_k \right) = [x_m,e_k].
\end{equation}
But for any $\epsilon$, $\ln (\epsilon x_m + \epsilon e_k)  - \epsilon x_i -\epsilon e_k$ is  in $I$. Hence $[x_m,e_k]\in I$.

Therefore we have proven that $[E\cap I, P\oplus F]\subset I$. It implies $[F,I]\subset I$. 

To prove $[E\cap I,E\cap I]\subset I$ we repeat the same argument but with this time $x_k\in E_k\cap I$ instead of $e_k\in P\oplus F$. We only need to justify that the point $x=-f_{ji}^{-1}(c_{F}) + f_{ji}^{-1}(\epsilon x_k)$ is again in $I$. By invariance under $f_{ji}$, we prove that $F + E_k\cap I\subset I$.

For any point $\exp(p)\in I$ we have $\exp(p)+F\subset I$. Indeed, let $\exp(p)$ be a $D(\gamma)(1)$, then $F|_p\subset I$. Also for any $\exp(q)\in F$ and any $\exp(p)\in I$, since $\Ad(\exp)=\exp(\ad)$ in $\mathfrak n$:
\begin{equation}
\exp(q)+\exp(p) = \exp\left( p + [q,p] +\frac 1{2!} [q,[q,p]] +\dots\right)  + \exp(q).
\end{equation}
Every bracket belongs to $I$ since $[F,I]\subset I$. So $\exp( p + [q,p] +\frac 1{2!} [q,[q,p]] +\dots) $ belongs to $I$ and therefore $\exp(q)+\exp(p)$ belongs to $I$ since $I + \exp(q)$ does.
By consequence, $F + E_k\cap I\subset I$.
\end{proof}

\begin{lemma}\label{lem-ssespace}
Under the same hypotheses, if $T\in \Gamma$ is any holonomy transformation with $T(x)=c+f(x)$. Then $f(I)=I$.
\end{lemma}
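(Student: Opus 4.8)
The plan is to recover $I$ as an intrinsic topological feature of the developing image and then to exploit the equivariance of $D$. First I would observe that the holonomy group $\Gamma=\rho(\pi_1(M))$ preserves the open subset $D(\widetilde M)\subset\cN$: if $\rho(g)=T$, then $g$ is a bijection of $\widetilde M$ and $D(g\cdot z)=T(D(z))$ for all $z$, so $T(D(\widetilde M))=D(\widetilde M)$. Consequently $T$ also preserves the topological boundary $\partial D(\widetilde M)$ taken in $\cN$.

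The second step is to identify $\partial D(\widetilde M)$ with $I$. The path-lifting argument in the proof of theorem~\ref{thm-partialcomp} shows that $D(\widetilde M)$ is a nonempty union of connected components of $\cN-I$. Since we have based $\cN$ at the chosen point of $I$ and $I$ is a subalgebra (the previous lemma), in exponential coordinates $I=\exp(I)$ is a genuine linear subspace of $\mathfrak n$; hence $\cN-I$ has at most two connected components, and the closure of each of them contains $I$. From this I would deduce that $D(\widetilde M)\cap I=\emptyset$, that $\overline{D(\widetilde M)}\supseteq I$, and that any component of $\cN-I$ not contained in $D(\widetilde M)$ is open, hence disjoint from $\overline{D(\widetilde M)}$; therefore $\partial D(\widetilde M)=I$. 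Combined with the first step, this yields $T(I)=I$ as a subset of $\cN$.

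It then remains to pass from this set-theoretic invariance to the asserted statement on linear parts. Writing $T(x)=c+f(x)$ with $f\in KA\subset\GL_{\Aut}(\mathfrak n)$ and using that $f$ is a Lie algebra automorphism (so $f(\exp(v))=\exp(f(v))$ and $f(I)$ is again a subalgebra), one gets $T(I)=L_c\exp(f(I))$, a left coset of the subgroup $\exp(f(I))\subset N$. The identity $e$ of $N$ belongs to $I=\exp(I)$ because the base point was chosen in $I$; from $e\in T(I)=L_c\exp(f(I))$ it follows that $c\in\exp(f(I))$, hence $L_c\exp(f(I))=\exp(f(I))$, so $\exp(f(I))=\exp(I)$ and finally $f(I)=I$ by injectivity of $\exp$.

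The delicate point is the middle step, the identification $\partial D(\widetilde M)=I$: it rests on the covering statement of theorem~\ref{thm-partialcomp} (that $D(\widetilde M)$ fills up every component of $\cN-I$ that it meets) together with the elementary remark that, in exponential coordinates, $I$ is a linear subspace, so the topology of $\cN-I$ is transparent. Once $T(I)=I$ is known the passage to linear parts is routine. If moreover $D$ is surjective onto $\cN-I$, then $I=\cN\setminus D(\widetilde M)$ outright and the first two steps collapse to one line.
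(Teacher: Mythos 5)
Your proof is correct and, in its decisive step, coincides with the paper's: once $T(I)=I$ is known, both arguments use that the base point lies in $I$ (so $c=T(0)\in I$) together with the fact that $I$ is a subgroup to conclude $f(I)=-c+I=I$. The only difference is that the paper simply asserts the invariance $T(I)=I$, whereas you justify it by identifying $I$ with $\partial D(\widetilde M)$ via the covering statement of theorem~\ref{thm-partialcomp}; that identification is sound (the image is a union of components of $\cN-I$, each of whose closures contains the linear subspace $I$), so your version just makes explicit a step the paper leaves implicit.
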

\begin{proof}
The holonomy transformation must preserve $I$. Furthermore, since the base point is taken in $I$, we have $c+f(0)=c\in I$. So $f(I) = -c+I$ but $-c+I=I$ since $I$ is a subgroup.
\end{proof}

\begin{lemma}\label{lem-red-decomp}
Under the same hypotheses,  there exists a decomposition $\mathfrak n = I\oplus V$  such that  $I$ and $V$ are both invariant under the linear holonomy of $M$. Furthermore, lemma~\ref{lem-decomp} applies and $N=\exp(I)\exp(V)$ by a diffeomorphism.
\end{lemma}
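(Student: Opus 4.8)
The plan is to produce the complement $V$ by averaging over the compact part of the linear holonomy, decomposing everything along the weight spaces of the one-dimensional group $A_1$, and then to feed the pair $(I,V)$ into lemma~\ref{lem-decomp}.

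First I would record the relevant invariances. Since $(G_1,\cN)$ has rank one, the holonomy of $M$ takes values in $G_1=N\rtimes KA_1$, so the linear holonomy group $\Gamma_L$, i.e.\ the set of linear parts $f$ of holonomy transformations $T(x)=c+f(x)$, is contained in $KA_1$. By lemma~\ref{lem-ssespace} every such $f$ fixes $I$, so $I$ is $\Gamma_L$-invariant, and by the preceding lemmas $I=P\oplus F\oplus\bigoplus_m(E_m\cap I)$ is a subalgebra. The group $A_1$ is generated by a single semisimple operator, namely $\mathrm{diag}(d_q)$ in the basis $(e_1,\dots,e_n)$, whose eigenspaces are the weight spaces $\mathfrak n^{(d)}=\mathrm{span}\{e_q:d_q=d\}$. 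Since $K$ centralizes $A_1$ it preserves each $\mathfrak n^{(d)}$, hence $KA_1$, and in particular $\Gamma_L$, preserves the decomposition $\mathfrak n=\bigoplus_d\mathfrak n^{(d)}$; and since $I$ is $A_1$-invariant we get $I=\bigoplus_d\bigl(I\cap\mathfrak n^{(d)}\bigr)$.

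Next I would build $V$ weight space by weight space. On a fixed $\mathfrak n^{(d)}$ the group $\Gamma_L$ acts as $K|_{\mathfrak n^{(d)}}$ composed with a scalar; averaging a Euclidean inner product over the compact group $\overline{K|_{\mathfrak n^{(d)}}}$ yields a $\Gamma_L$-invariant inner product on $\mathfrak n^{(d)}$ (scalars are irrelevant for orthogonality). Letting $V^{(d)}$ be the orthogonal complement of $I\cap\mathfrak n^{(d)}$ in $\mathfrak n^{(d)}$ and $V=\bigoplus_d V^{(d)}$, we get $\mathfrak n=I\oplus V$; moreover $V$ is $A_1$-invariant because it is a sum of $A_1$-stable subspaces of weight spaces, and $V$ is $K$-invariant because each $V^{(d)}$ is, so $V$ is invariant under all of $\Gamma_L$.

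Finally I would invoke lemma~\ref{lem-decomp}. Refining the grading by the $A_1$-weights, write $\mathfrak n_{j,d}=\mathfrak n_j\cap\mathfrak n^{(d)}$; choosing inside each nonzero $\mathfrak n_{j,d}$ a basis adapted to the splitting $\mathfrak n_{j,d}=(\mathfrak n_{j,d}\cap I)\oplus(\mathfrak n_{j,d}\cap V)$ produces a basis of $\mathfrak n$ in which $I$ and $V$ are each spanned by a subfamily, so that the ideals $\mathfrak n^i=\mathfrak n_i\oplus\dots\oplus\mathfrak n_k$ satisfy both $[\mathfrak n,\mathfrak n^i]\subset\mathfrak n^{i+1}$ and $\mathfrak n^i=(\mathfrak n^i\cap I)\oplus(\mathfrak n^i\cap V)$; lemma~\ref{lem-decomp} then gives the unique decomposition $x=\exp(x_I)\exp(x_V)$ and hence the diffeomorphism $N=\exp(I)\exp(V)$. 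The delicate point, and the one I expect to be the main obstacle, is precisely the compatibility of $I$ (equivalently $V$) with the homogeneous components $\mathfrak n_j$, which is needed for the adapted basis to exist: here one must use the explicit description $I=P\oplus F\oplus\bigoplus_m(E_m\cap I)$, the fact that $P$, $F$ and each $E_m$ are spanned by subfamilies of $(e_1,\dots,e_n)$, and the structural information on $I$ coming from $M$ being closed, to check that $I$ does respect the grading. Everything else is routine bookkeeping.
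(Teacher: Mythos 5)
Your construction of $V$ coincides with the paper's: on each $A_1$-weight space the linear group acts by a compact group times scalars, an averaged (i.e.\ $KA_1$-invariant) inner product gives an invariant complement of $I\cap\mathfrak n^{(d)}$, the weight spaces of non-positive degree lie in $P\oplus F\subset I$ and contribute nothing, and invariance of $I$ and $V$ under the linear holonomy follows from lemma~\ref{lem-ssespace} exactly as you say. The gap is in the final step, and it is precisely the point you flag and then defer: to apply lemma~\ref{lem-decomp} directly to the pair $(I,V)$ with the ideals $\mathfrak n^i=\mathfrak n_i\oplus\dots\oplus\mathfrak n_k$ you need $\mathfrak n^i=(\mathfrak n^i\cap I)\oplus(\mathfrak n^i\cap V)$, which in your plan comes from a basis adapted simultaneously to the grading and to $I\oplus V$, i.e.\ from $I$ being a graded subspace. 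Nothing established so far gives this. The decomposition $I=P\oplus F\oplus\bigoplus_m(E_m\cap I)$ only splits $I$ along the $A_1$-weights; a single weight space $E_m$ may meet several homogeneous components $\mathfrak n_j$ (the $A_1$-degree of a basis vector does not determine its layer $\mathfrak n_j$), and then $E_m\cap I$ can a priori be a ``diagonal'' subspace skew to the grading, in which case no adapted basis exists and the hypothesis of lemma~\ref{lem-decomp} fails for your filtration. So the proof is incomplete at its crucial step, and it is not clear the missing compatibility is even true in general.

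The paper avoids this issue by a nested use of lemma~\ref{lem-decomp} together with the subgroup property of $I$. First it works inside the subalgebra $E$ (recall $[E,E]\subset E$), filtered by the $A_1$-degrees $E^i=E_i\oplus\dots\oplus E_q$ with $[E,E^i]\subset E^{i+1}$; there the compatibility hypothesis is automatic, because each $E_m$ is a single weight space and $E_m=(E_m\cap I)\oplus V_m$ holds by the very construction of $V$. This gives a unique decomposition $x_E=x_{E\cap I}+x_V$ inside $\exp(E)$. Second, it decomposes $\mathfrak n=(P\oplus F)\oplus E$, which is spanned by subfamilies of $(e_1,\dots,e_n)$, so lemma~\ref{lem-decomp} applies with the original grading and yields $x=x_{P\oplus F}+x_E$. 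Finally, since $I$ is a subgroup containing $\exp(P\oplus F)$, the product $x_{P\oplus F}+x_{E\cap I}$ is a single element $x_I\in\exp(I)$, and uniqueness of $x=x_I+x_V$ follows from uniqueness of $x_V$. If you replace your single application of lemma~\ref{lem-decomp} by this two-stage argument (or else actually prove that $I$ respects the grading, which the paper never needs), your proof goes through.
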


\begin{proof}
On each $E_i$, the linear group $KA_1$ acts by similarity: $A_1$ acts by homotheties and since $K$ centralizes $A_1$, it preserves $E_i$.

By the choice of the Fried dynamic $T_{ji}$ with asymptotic fixed point $p\in I$, we know that $(P\oplus F)|_p\subset I$. So the linear subspaces $L_j$ such that the degrees are $d_j\leq 0$ are all contained in $I$. To construct $V$, we only need to inspect the subspaces on which the degrees are $d_j>0$ (it corresponds to the subspaces of $E|_p$). 

Since $A_1$ acts by homotheties on $E_i$, we can consider a scalar product on $E_i$ that is invariant under $KA_1$. Let $I_i = I\cap E_i$ and $V_i = I_i^\bot$. By lemma~\ref{lem-ssespace} the linear holonomy preserves $I_i$ and therefore it  preserves its orthogonal $V_i$. 
We let $V = V_1\oplus \dots\oplus V_k$. It is stable under the linear holonomy and gives $V\oplus I = \mathfrak n$.

It lasts to show that we have again a decomposition $N=\exp (I)\exp V$. First, in $E$ we can decompose by $I\cap E\oplus V$. Indeed, each $E_i$ is generated by $E_i\cap I$ and $E_i\cap V$. So lemma~\ref{lem-decomp} applies and for any $x_E\in E$, there is a unique decomposition $x_E = x_{E\cap I} + x_V$. 

Now decompose by $(P\oplus F) \oplus E$. We get that for any $x\in N$, $x = x_{P\oplus F}+x_E = x_{P\oplus F}+x_{E\cap I}+x_V$. Since $I$ is a subgroup and $P\oplus F\subset I$, we have $x_{P\oplus F} + x_{E\cap I} = x_I\in I$.

It lasts to show that $x= x_I+x_V$ is unique. The point $x_V$ is unique so $x_I'+x_V=x_I+x_V$ implies $x_I'=x_I$.
\end{proof}

\subsection{Markus conjecture}
We now address the link  with Markus conjecture~\cite{Markus}. This conjectures states that (in real affine geometry) \emph{closed manifolds with parallel volume are complete}.

\begin{definition}
A ray geometry $(G,\cN)$ has \emph{parallel volume} if $G$ preserves the volume form $\dd x_1\wedge\dots\wedge \dd x_n$, with $(x_1,\dots,x_n)$ the dual coordinate functions of the basis $(e_1,\dots,e_n)$ given by the geometry.
\end{definition}

If we were in real affine geometry, theorem~\ref{thm-partialcomp} would show that closed manifolds with parallel volume are complete. Indeed, the fact that it preserves $I$ implies that the holonomy is reducible. But by Goldman-Hirsch~\cite{GH}, the holonomy of such a closed manifold can never be reducible.

This argument that we used in~\cite{Ale3} can not be applied here since Goldman-Hirsch construction relies on the commutative law of $\R^n$. Instead we prove theorem~\ref{thm-markus} by an argument similar to what Fried~\cite{Fried} employed.

\begin{theorem}\label{thm-markus}
\thmmarkus
\end{theorem}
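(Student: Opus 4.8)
\textbf{Proof strategy for Theorem~\ref{thm-markus}.} The plan is to assume $M$ is closed, incomplete, and carries a parallel volume, and to derive a contradiction from the reducibility established in the previous subsection. By Theorem~\ref{thm-partialcomp} the developing map is a cover onto $\cN - I$ for a nil-affine subspace $I$, and by Lemma~\ref{lem-red-decomp} there is a linear holonomy-invariant splitting $\mathfrak n = I \oplus V$ with $N = \exp(I)\exp(V)$ as a diffeomorphism. The point of the parallel volume hypothesis is that it prevents the holonomy from expanding or contracting this decomposition too freely: for every holonomy element $T(x) = c + f(x)$, the linear part $f \in KA$ has determinant $1$, so the product of the eigenvalues of $f$ on $I$ and on $V$ is the reciprocal of one another. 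First I would record that the Fried dynamics $T_{ji}$ produce, in the limit $j \to \infty$, a linear part that strictly contracts $E = L_1$ (or $L_2$) — the ``$E$'' subspace is precisely where $A_1$ acts with one sign of exponent — so the determinant of $f_{ji}$ restricted to $E$ tends to $0$ or $\infty$, while on the complementary subspace it does the opposite to keep the total determinant $1$.

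\textbf{Main argument.} Following Fried~\cite{Fried}, I would use the covering $D\colon \widetilde M \to \cN - I$ together with the parallel volume to build an invariant volume (or an invariant measure of controlled growth) on $\cN - I$ that is pushed forward from $M$, and then show this is incompatible with the contracting behaviour of the Fried holonomy near $I$. Concretely: the quotient $(\cN - I)/\Gamma$ would have to have finite volume if $M$ were closed and the cover were finite-sheeted, but the cover need not be finite; so instead the argument should proceed by examining the total volume swept out. The half-space $H_x$ of Lemma~\ref{lem-boule} is visible from every point, so $\widetilde M$ contains a $\Gamma$-invariant family of half-spaces whose complement is $I$; since $M$ is closed, only finitely many $\Gamma$-orbits of ``directions'' occur, and the volume form on $M$ pulls back to $\dd x_1 \wedge \dots \wedge \dd x_n$ on $\widetilde M \subset \cN$. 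The contradiction comes from the fact that the holonomy transformations $T_{ji}$, being volume-preserving, cannot make $D(U_i)$ accumulate onto the lower-dimensional set $I$ the way the Fried dynamics force them to: a volume-preserving affine map sending a fixed compact convex body $D(U_i)$ into thinner and thinner neighbourhoods of a proper nil-affine subspace $I$ must expand in the transverse directions without bound, and this expansion is incompatible with $D(U_j)$ remaining inside the fixed half-space $H_{y}$, or with the recurrence that defined the Fried dynamics in the first place.

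\textbf{Expected main obstacle.} The delicate point is turning ``volume-preserving forbids collapse onto $I$'' into a rigorous contradiction, because $I$ may have positive dimension and the holonomy is only required to preserve $I$ setwise, not pointwise — so $f_{ji}$ could in principle act by isometries along $I$ while contracting transversally, and one must rule out that the transverse expansion ``escapes to infinity'' harmlessly. I expect the resolution to mirror Fried's: use the cocycle relations and Lemma~\ref{lem-asympt} (the eigenvalue $\omega_q = 1$ case forces finite positive limits) to pin down that the linear parts $f_{ji}$ restricted to $V$ converge, after the contraction on $E \cap I$ is normalised away, to a transformation with a genuine eigenvalue bigger than $1$; then the determinant-one condition forces a compensating eigenvalue strictly less than $1$ on $I$, which would give a second, transverse-to-$V$ contracting direction and hence enlarge the set $E$ beyond what the rank-one structure (Lemma giving $P = L_3$, $\{E,F\} = \{L_1,L_2\}$) allows — a contradiction. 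Assembling these pieces carefully, and checking that the two-step nilpotency does not interfere with the volume bookkeeping (the group law $+$ is non-abelian but the Jacobian of left translation is still $1$), is where the real work lies.
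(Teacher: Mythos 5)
There is a genuine gap: your proposed contradiction lives at the level of the linear holonomy and the Fried dynamics, and at that level there is no contradiction to be had. In a rank-one ray geometry with parallel volume the exponents of $A_1$ simply sum to zero, so a Fried sequence $f_{ji}$ may contract $E$ while expanding $F$ with exactly compensating Jacobians; the limit sets $C_{r,\infty}\subset (P\oplus F)|_{D(\gamma)(1)}$ are allowed to be unbounded in the $F$-directions, which lie \emph{along} the half-space $H_i=E_{+,i}\times(P\oplus F)$, not transverse to it, and the recurrence takes place in $M$, not in $\cN$. So ``volume-preserving maps cannot collapse $D(U_i)$ onto $I$'' is false as stated, and your fallback — determinant-one forcing a second contracting direction that ``enlarges $E$ beyond what rank one allows'' — misidentifies the structure: the compensating expansion already sits on $F\subset I$, which the rank-one decomposition $\{E,F\}=\{L_1,L_2\}$, $P=L_3$ fully accommodates. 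The paper itself points out that the reducibility-based shortcut (Goldman--Hirsch) is unavailable outside the commutative case, so some genuinely global input is needed.

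What the paper actually does, and what is missing from your proposal, is the construction of an explicit flow on the incomplete manifold. Using the splitting $\mathfrak n = I\oplus V$ of lemma~\ref{lem-red-decomp} and the unique decomposition $x=x_I+x_V$, one defines on $\cN-I$ the vector field $X(x)=(L_x)_*\ln(x_V)$. Because every holonomy element (indeed every element of the stabilizer $H$ of $I$) has translation part in $\exp(I)$ and linear part preserving both $I$ and $V$, the field $X$ is $H$-invariant, hence descends through the covering $D\colon\widetilde M\to\cN-I$ to a vector field $Y$ on the \emph{closed} manifold $M$. Its flow is computed to be $R_t(x_I+x_V)=x_I+e^t x_V$, so $R_t^*\vol_N=e^{t\dim V}\vol_N$, i.e.\ the flow strictly expands the parallel volume; but any flow on a closed manifold preserves the total volume $\int_M\vol_M$, and this is the contradiction. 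Your write-up never produces this (or any) $H$-invariant object transverse to $I$ that can be transported down to $M$, which is precisely where the hypothesis ``$M$ closed'' enters; without it, the parallel-volume hypothesis has no purchase beyond the harmless exponent bookkeeping.
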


\begin{proof}
Consider a $(G,\cN)$-manifold that has its developing map covering $\cN-I$. We show a contradiction.

Denote $H\subset G$ the stabilizer of $I$. Each point of $I$ provides a Fried dynamic.

Consider the decomposition $\mathfrak n = I\oplus V$ given by the preceding lemma.
By lemma~\ref{lem-decomp}, for any $x\in N$ there exists a unique decomposition $x = x_I+ x_V$ with $x_I\in \exp(I)$ and $x_V\in \exp(V)$. On $N-I$ we define the vector field $X$ by:
\begin{equation}
X(x) = (L_x)_*\ln(x_V)
\end{equation}

The vector field $X$ is $H$-invariant. Indeed, if $T\in H$ then $T(x)= c_I + f(x)$ with $c_I\in \exp(I)$ since $I$ is stable. So
\begin{equation}
X(T(x)) = X(c_I + f(x_I) + f(x_V)) = (L_{T(x)})_*f(\ln(x_V)) = T_*X(x)
\end{equation}
since $c_I+f(x_I)$ must belong  to $\exp(I)$ and $V$ is stable under the linear holonomy.

\vskip10pt
Now, similarly to the last argument of Fried~\cite{Fried}, we show that $X$ can't preserve the parallel volume, in contradiction with the fact that $M$ is closed.
Recall that we have the parallel volume in the coordinates $(x_1,\dots,x_n)$ associated to the $(e_1,\dots,e_n)$:
\begin{equation}
\vol_N = \dd x_1\wedge\dots\wedge \dd x_n.
\end{equation}

By hypothesis, $\vol_N$ is preserved by $G$, and so it is also by $H$. The pulled-back $D^*\vol_N$ defines a volume form on $\widetilde M$ that is invariant under $\pi_1(M)$. So it gives a volume form on the closed manifold $M$. Any flow of any vector field on $M$ must preserve the volume of  $M$ since it is closed.

But $X$ is $H$-invariant, so by the same procedure, provides a vector field $Y$ on $M$. The flow of $Y$ corresponds to the flow of $X$. Choose $\cup U_i$ a cover of $M$ by open sets so that there exists corresponding lifts $\cup \widetilde U_i$ in $\widetilde M$ that are each diffeomorphically sent to $V_i$ in $N$ by $D\colon \widetilde M \to N$. Choose a partition of the unity $\{\rho_i\}$ in $M$ associated to $\{U_i\}$. Denote $R_t$ the flow of $X$ in $N$ and $\Phi_t$ the flow of $Y$ in $M$. Then the conservation of the volume in $M$ implies that for any $t\in \R$:
\begin{align}
\int_M \Phi_t^*\vol_M &= \int_M \vol_M \\
\iff \sum_{j} \int_{U_j} \rho_j \Phi_t^* \vol _M& = \sum_j \int_{U_j} \rho_j \vol_M \\
\iff \sum_j \int_{V_j} \rho_j R_t^*\vol_N &= \sum_j \int_{V_j}\rho_j \vol_N\\
\iff \sum_j \int_{V_j}\rho_j \left(R_t^*\vol_N - \vol_N\right) &= 0.
\end{align}

But now, observe that
\begin{equation}
R_t(x_I+x_V) = x_I + e^t x_V.
\end{equation}
Indeed, for any fixed $x$, $R_t(x)$ must be the solution to the differential equation $R_t(x)^*\omega_N = \omega_N(X(R_t(x)))$, where $\omega_N$ is the Maurer-Cartan form of $N$. By left-invariance and the notations,
\begin{equation}
R_t(x)^*\omega_N = \exp(e^t \ln(x_V))^*\omega_N.
\end{equation}
Note that the exponential map $f(t)=\exp(tv)$ is precisely the solution to the equation $f^*\omega_N=v$ with $f(0)=e$.  Let $t\in \R$ and $\epsilon \to 0$,
by the Baker-Campbell-Hausdorff formula and $[x_V,x_V]=0$,
\begin{align}
\exp(e^{t+\epsilon}\ln(x_V)) &= \exp(e^t(1+\epsilon+o(\epsilon))\ln(x_V)) \\
&= \exp(e^t\ln(x_V))\exp((\epsilon+o(\epsilon))e^t\ln(x_V))
\end{align}
and by left-invariance, we obtain
\begin{equation}
\exp(e^t \ln(x_V))^*\omega_N = e^t\ln(x_V) = \omega_N(X(R_t(x))).
\end{equation}

Therefore
\begin{equation}
R_t^*\vol_N  = e^{t\dim V} \vol_N
\end{equation}
and it contradicts the preservation of the volume since for $t>0$ large enough $(R_t^*\vol_N - \vol_N)$ is a positive function.
\end{proof}

\subsection{The automorphism group}

It is a vague conjecture~\cite{Gromov} that geometric manifolds with large automorphism groups  should be classifiable.

\begin{definition}
Let $M$ be a $(G,X)$-manifold. An \emph{automorphism} $f\colon M \to M$ is a diffeomorphism such that if $\widetilde f\colon\widetilde M \to \widetilde M$ is any lift  then there exists a unique $\chi(\widetilde f)\in N_G(\Gamma)$ in the normalizer of the holonomy group, such that $D(\widetilde f(x)) = \chi(\widetilde f)D(x)$.

The group $\Aut(M)$ of its automorphism is sent by $\chi$ into a subgroup of $\Gamma \backslash N_G(\Gamma)$.
\end{definition}

By unicity of $\chi(\widetilde f)$, if $\widetilde f_1$ and $\widetilde f_2$ are two lifts of $f$ then $\widetilde f_2 = g\widetilde f_1$ for $g\in \pi_1(M)$ and therefore $\chi(\widetilde f_2) = \rho(g)\chi(\widetilde f_1)$.

The hypothesis that $\chi(\widetilde f)$ normalizes $\Gamma$ follows from the fact that if $\widetilde f$ lifts $f$ then it must preserves the fibers $\pi_1(M)$ of $\widetilde M$ and therefore 
\begin{equation}
\chi(\widetilde f) \Gamma \cdot D(x) = D(\widetilde f(\pi_1(M) \cdot x)) = D(\pi_1(M)\cdot \widetilde f(x)) = \Gamma\cdot  \chi(\widetilde f)D(x).
\end{equation}

\vskip10pt
If the rank one ray geometry $(G_1,\cN)$
 is a subgeometry of a Levi geometry of a rank one parabolic geometry, then by Ferrand-Obata~\cite{Ferrand,Obata}, Schoen-Webster~\cite{Schoen,Webster} and Frances~\cite{Frances} we would get that if the automorphism group acts non properly then the manifold is complete.

We show that this phenomenon is only dependent on the fact that the ray geometry has rank one on an at most two-step nilpotent space. It is indeed the case if it comes from a rank one parabolic geometry.

\begin{theorem}\label{thm-auto}
\thmauto
\end{theorem}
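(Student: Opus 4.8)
The plan is to argue by contradiction: assume $M$ is incomplete and that $\Aut(M)$ does not act properly, and derive a contradiction using the rigid structure of $I$ obtained in the previous subsections. By Theorem~\ref{thm-partialcomp}, incompleteness gives a nil-affine subspace $I\subset\cN$ such that $D\colon\widetilde M\to\cN-I$ is a covering onto its image, and by the lemmas of section~\ref{sec-5} we may take $0\in I$ to be an asymptotic fixed point of a Fried dynamics, $I$ is a subgroup of $N$, $\mathfrak n=I\oplus V$ is a linear-holonomy-invariant splitting, and $N=\exp(I)\exp(V)$ diffeomorphically. The non-properness hypothesis should first be translated into a statement about $\chi(\Aut(M))\subset\Gamma\backslash N_G(\Gamma)$: there is a sequence $f_k\in\Aut(M)$ and points $p_k\to p$, $f_k(p_k)\to q$ in $M$ with $f_k$ escaping every compact set of $\Aut(M)$; lifting, one gets elements $h_k=\chi(\widetilde f_k)\in N_G(\Gamma)$ whose action on $\widetilde M$ (via the developing map) has a ``collapsing'' behaviour analogous to a Fried dynamics, i.e. after passing to a subsequence $h_k$ or $h_k^{-1}$ exhibits a nontrivial limit set in $\cN$.

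The key structural input is that any such $h_k$ normalizes $\Gamma$, hence $h_k$ must send $I$ — which is canonically attached to $\Gamma$ via the Fried construction — to $h_k I$, and one wants to show $h_k I$ is again forced to be the \emph{same} kind of invisible subspace for the conjugated holonomy $h_k\Gamma h_k^{-1}=\Gamma$; so $h_k I=I$, i.e. $h_k$ itself stabilizes $I$ and preserves the splitting $\mathfrak n=I\oplus V$ up to the linear-holonomy action. The next step is to examine the linear part: writing $h_k(x)=c_k+g_k(x)$ with $g_k\in\GL_{\Aut}(\mathfrak n)$, the non-properness forces $g_k$ (or $c_k$) to leave every compact set, and since $g_k$ preserves $I$ and $V$ one analyses the induced action on $V$. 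On $V\subset E$ the group $A_1$ acts by homotheties of strictly positive degree and $K$ by rotations commuting with it; a non-proper escape of automorphisms should then pump a geodesic direction transverse to $I$ out to an incomplete geodesic whose endpoint lies \emph{outside} $I$ — contradicting that $I$ contains \emph{every} incomplete-geodesic endpoint (the defining property of $I$ from the proof of Theorem~\ref{thm-partialcomp}). Concretely: pick $v\in V$, $v\neq 0$, consider the geodesic $t\mapsto \exp(tv)$ based near a point of $\widetilde M$; apply the escaping $h_k$ and use that the $V$-component gets dilated, so in the limit one obtains an incomplete geodesic with endpoint having nonzero $V$-coordinate, hence not in $I$, a contradiction.

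The main obstacle, I expect, is making the ``$h_k$ behaves like a Fried dynamics'' step precise: the Fried machinery in section~\ref{sec-4} was built from a recurrent incomplete geodesic inside a single closed manifold, whereas here the collapsing comes from an external sequence of automorphisms, so one must either (i) reprove the limit-set analysis (Propositions~\ref{prop-visconv}, \ref{prop-eplus}) directly for the sequence $h_k$ acting on $\widetilde M$ via $D$, using that $M$ is closed so $h_k$-images of a fixed convex ball stay controlled, or (ii) show that non-properness actually produces genuine incompleteness of some geodesic in $M$ and invoke Theorem~\ref{thm-partialcomp} plus uniqueness of $I$. Option (ii) seems cleaner: if $\Aut(M)$ acts non-properly then $M$ cannot be complete, because on a \emph{complete} $(G_1,\cN)$-manifold $\widetilde M=\cN$ and $\Aut(M)$ embeds into the normalizer of a lattice-like $\Gamma$ in $G_1$, where properness of the action would follow from a discreteness/Zassenhaus-type argument for the rank-one dilation structure; so non-properness $\Rightarrow$ incomplete, and then the reducibility of $\Gamma$ via $I$ together with the volume-type argument of Theorem~\ref{thm-markus} — or a direct dynamical contradiction using that $h_k$ must both normalize $\Gamma$ and grossly distort the $A_1$-direction — closes the case. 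The cleanest writeup will likely combine: non-proper $\Rightarrow$ incomplete (the crux), then use the subgroup structure of $I$ and invariance of $\mathfrak n=I\oplus V$ under both $\Gamma$ and $\chi(\Aut(M))$ to find an automorphism-generated incomplete geodesic exiting $I$, contradicting Theorem~\ref{thm-partialcomp}.
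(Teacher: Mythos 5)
Your setup is the right one (assume incompleteness and non-properness, use Theorem~\ref{thm-partialcomp} and the section~\ref{sec-5} lemmas: $I$ is a subgroup, $\mathfrak n=I\oplus V$ is holonomy-invariant, and $N(\Gamma)\subset I\rtimes KA_1$ stabilizes $I$), but the concluding mechanism has a genuine gap: you never use the non-properness hypothesis in a precise way. Your step ``the escaping $h_k$ dilate the $V$-component, so in the limit one gets an incomplete geodesic with endpoint outside $I$'' is unsubstantiated. An escaping sequence in $I\rtimes KA_1$ can escape through its translational part in $\exp(I)$, or through an $A_1$-factor that \emph{contracts} $V$, or through a combination; nothing forces dilation of $V$, and even granting dilation, passing from images of a geodesic under $h_k$ to an incomplete geodesic of $\widetilde M$ requires exactly the limit-set analysis you flag as ``the main obstacle'' and then do not carry out. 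What the non-properness actually gives you --- and what the paper exploits --- is a convergence statement: $x_n\to x$ in $\widetilde M$, $g_n\in N(\Gamma)$ with $\Gamma g_n$ leaving every compact set, and $\Gamma g_nD(x_n)\to \Gamma D(y)$. The paper's proof then uses the rank-one hypothesis to compose $g_n$ with Fried-dynamics elements $T_{ji}\in\Gamma$ so that the $KA_1$-factor becomes bounded, and reads the $I\oplus V$ coordinates of the convergent orbit point: since $D(y)\notin I$, its $V$-component is nonzero and $V\subset E$, which forces the linear part and then the translational part of $\eta_nh_n$ to converge, contradicting the escape from compact sets. In short, the paper proves ``incomplete $\Rightarrow$ $\Aut(M)$ acts properly'' directly; your sketch never extracts this contradiction from the convergence data.

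Your fallback option (ii) is moreover logically inverted. You assert that on a \emph{complete} closed manifold $\Aut(M)$ would act properly, hence non-properness implies incompleteness; this is false and is the opposite of what the theorem expresses. Complete closed manifolds are precisely where non-proper automorphism groups occur (already for a complete affine torus, an infinite discrete group of affine automorphisms such as $\SL(2,\Z)$ acts non-properly on the compact quotient; no Zassenhaus-type discreteness argument rules this out). Inside your proof by contradiction incompleteness is already assumed, so this detour adds nothing, and the appeal to the volume argument of Theorem~\ref{thm-markus} is unavailable here since no parallel volume is hypothesized. To repair the proof you should drop option (ii) entirely and carry out the bookkeeping of option (i) in the form the paper does: normalize the $KA_1$-part of the escaping sequence using the rank-one Fried dynamics inside $\Gamma$, then use $D(y)_V\neq 0$ and the decomposition $N=\exp(I)\exp(V)$ of Lemma~\ref{lem-red-decomp} to force convergence modulo $\Gamma$.
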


\begin{proof}
Assume that $M$ is not complete, then by theorem~\ref{thm-partialcomp}, we get that $D\colon\widetilde M \to \cN-I$ is a cover. Therefore the holonomy $\Gamma$ preserves $I$ and so does the normalizer $N(\Gamma)$. Both must be subgroups of $I\rtimes KA_1$, the subgroup of $G_1$ stabilizing $I$.
We show that $\Aut(M)$ must act properly.

Choose a base point $p\in I$ which is the asymptotic fixed point of a Fried dynamic and consider the decomposition $I\oplus V=\mathfrak n$ given by lemma~\ref{lem-red-decomp}. We identify $I$ with its linear subspace in $\mathfrak n$. 
In rank one, $I$ contains $(P\oplus F)|_p$ and therefore $V$  is contained in $E|_p$.
By lemma~\ref{lem-decomp} we get that any $x\in N$ has a unique decomposition $x = x_I+ x_V$ with $x_I\in\exp(I)$ and $x_V=\exp(V)$.

\vskip10pt
Assume that $\Aut(M)$ acts non properly. It is equivalent to the existence of $x_n\to x$ in $\widetilde M$, of $g_n\in N(\Gamma)$ with $\Gamma g_n$ escaping every compact of $\Gamma\backslash N(\Gamma)$, and of $y\in\widetilde M$ such that $\Gamma g_n  D(x_n) \to \Gamma D(y)$. 
 
 Write
$g_n(x) = c_n + f_n(x)$ with $c_n\in I$ and $f_n\in KA_1$.
The Fried dynamic $T_{ji}\in \Gamma$ considered can be written $T_{ji}{x} = c_{ji}+f_{ji}(x)$. Note that
\begin{equation}
T_{ji}g_n(x) = c_{ji} + f_{ji}(c_n) + f_{ji}f_n(x).
\end{equation}
It shows that we can assume $f_{ji}f_n$ bounded in $KA_1$ since $A_1$ has rank one (up to exchange $T_{ji}$ with $T_{ji}^{-1}$).

Hence there exists  $\gamma_n\in \Gamma$ such that $\gamma_n g_n=  h_n$ has its $KA_1$-factor that converges. The convergence  $\Gamma g_n D(x_n)\to \Gamma D(y)$ says that there exists  $\eta_n\in \Gamma$ such that $\eta_nh_n D(x_n) \to D(y)$.
 
Write 
$\eta_n(x) = b_n + q_n(x)$ and $h_n(x) = c_n + r_n(x)$ we have by construction $r_n\to r$ and
 \begin{equation}
 (\eta_n h_n)(D(x_n)) = b_n + q_n(c_n) + q_nr_n(D(x_n)) \to D(y).
 \end{equation}
 We again have a decomposition
 \begin{equation}
  (\eta_n h_n)(D(x_n)) = ( (\eta_n h_n)(D(x_n)))_I +  ((\eta_n h_n)(D(x_n)))_V.
 \end{equation}
 The term $b_n+q_n(c_n)$ must belong to $I$ since $c_n\in I$. So the term $((\eta_nh_n)(D(x_n)))_V$ tending to  $D(y)_V$ is determined by $(q_nr_n(D(x_n)))_V$.
But since $D(y)_V\neq 0$ and $V\subset E$, it shows that $q_n$ itself must converge to $q\in KA_1$.

The term $((\eta_nh_n)(D(x_n)))_I$ tending to $D(y)_I$ has the same limit as the $I$-factor of $b_n+q_n(c_n)+qr(D(x_n))$.
 Since $D(y)_I$ and $D(x)=\lim D(x_{n})$ are both  finite, $b_n+q(c_n)$ must converge.
 
 Therefore $\eta_nh_n$ converges in $I\rtimes KA_1$, contradicting the fact that $\Gamma g_n$ escapes every compact of $\Gamma\backslash N(\Gamma)$.
\end{proof}

\paragraph{In higher rank}As in ~\cite{Ale3}, we can give a relatively generic example that in rank $r>1$ this phenomenon is no longer true. Let $(x,y,z)$ be coordinates of the Heisenberg group in dimension three and consider the rank two diagonal group be given by $(\beta_1 x, \beta_2 y, \beta_1\beta_2 z)$. Then consider the radiant manifold given by $\cN-\{0\}$ quotiented by the subgroup generated by $(2x,2y,4z)$. Then let $f(x,y,z) = (x/2,y,z/2)$ and $p=(x,1,0)$. Then $f^n(p)\to (0,1,0)$ and it corresponds in $M$ to an automorphism acting non properly.

\printbibliography
\end{document}